\numberwithin{equation}{section}
\newtheorem{theo}{Theorem}
\newtheorem{lemma}[theo]{Lemma}
\newtheorem{prop}[theo]{Proposition}
\newtheorem{cor}[theo]{Corollary}
\newtheorem{defi}[theo]{Definition}
\theoremstyle{definition}
\newtheorem{rem}[theo]{Remark}
\newtheorem{rems}[theo]{Remarks}
\numberwithin{theo}{section}
\def\me{\mathsf{e}}
\def\mv{\mathsf{v}}
\def\ea{\EuFrak{a}}
\def\@secappcntformat#1{%
 \ifappendix \rm\appendixname\ifoneappendix\else~\fi\fi
 \ifoneappendix \else \csname the#1\endcsname\relax\fi
 \ifx\apphe@d\@empty \else .\fi\enskip
}
\title[Gaussian estimates for a heat equation]{Gaussian estimates for a heat equation\\ on a network}
\author[Delio Mugnolo]{}
\email{delio.mugnolo@uni-ulm.de}
\subjclass{Primary: 47D06, 47A07; Secondary 90B10}
\keywords{Evolution equations on networks; Ultracontractive semigroups of operators; Gaussian estimates}
\thanks{I gratefully acknowledge the hospitality of the AGFA - Working group on Functional Analysis at the Eberhard-Karl University of T\"ubingen, where this paper was begun in February 2005. I would also like to thank Rainer Nagel, Susanna Piazzera, Stefan Teufel and Roderich Tumulka (University of T\"ubingen), as well as Abdelaziz Rhandi (University of Marrakech) and Stefano Cardanobile (University of Ulm) for helpful comments and motivating discussions.}
\begin{document}
\maketitle

\centerline{\scshape Delio Mugnolo}
\medskip
{\footnotesize
 \centerline{Abteilung Angewandte Analysis der Universit\"at}
  \centerline{Helmholtzstra{\ss}e 18, D-89081 Ulm, Germany}
\smallskip
\centerline{and}
\smallskip
 \centerline{Dipartimento di Matematica dell'Universit\`a}
  \centerline{Via Orabona 4, I-70125 Bari, Italy}
} 

\medskip


 \medskip

\begin{abstract}
We consider a diffusion problem on a network on whose nodes we
impose Dirichlet and generalized, non-local Kirchhoff-type
conditions. We prove well-posedness of the associated initial
value problem, and we exploit the theory of sub-Markovian and
ultracontractive semigroups in order to obtain upper Gaussian
estimates for the integral kernel. We conclude that the same
diffusion problem is governed by an analytic semigroup acting on
all  $L^p$-type spaces as well as on suitable spaces of continuous
functions. Stability and spectral issues are also discussed. As an
application we discuss a system of semilinear equations on a
network related to potential transmission problems arising in
neurobiology.
\end{abstract}

\maketitle

\section{Introduction}

Evolution equations taking place in networks or, more generally, in ramified structures have been first considered in pioneering articles by K. Ruedenberg and C. Scherr back in the 1950s, cf. \cite{RS53}, and, at a more mathematical level, in a series of papers by R. Mills and E. Montroll and by G. Lumer in the 1970s, cf. \cite{Mo70}--\cite{MM70} and~\cite{Lu80}--\cite{Lu80b}, respectively. Shortly afterwards, F. Ali Mehmeti, J. von Below, S. Nicaise, and J.P. Roth among others began a systematical study of properties of elliptic operators acting on spaces of functions over networks, cf. e.g. the monographs~\cite{Ni93},~\cite{AM94},~\cite{LLS94}, and references therein. Ever since, such problems have aroused broad interest among mathematicians working on partial differential equations, control, and spectral theory -- as well as among theoretical physicists interested in scattering theory of guided waves, photonic crystals, and quantum wires, resulting in a literature so vast that it can by no means be summarized here.

Throughout this paper we consider a finite, unitarily
parametrized, connected network whose structure is given by a
suitable graph. On it we study a general diffusion equation.
Adopting a setting which is standard in literature, the node
conditions impose continuity and Kirchhoff-type transmission laws
in ramification vertices. However, we extend known results by
allowing more general, \emph{non-local} node conditions,
cf.~Section 2 below. Roughly speaking, in each node $\mv_i$ of the
network we allow an absorption that does not only depend on the
value of the function in $\mv_i$ itself, but also on other nodes
$\mv_h$. Using the arguments presented in~\cite{Be88}, one can
promptly obtain well-posedness of such a general parabolic network
problem; we are thus more interested in qualitative properties. In
some sense, our results complement those obtained in~\cite{KS99}
and~\cite{KS06}, where even more general node conditions are
allowed, and where positivity of the semigroups is also discussed,
but where the quantum physical viewpoint was motivated to mainly
discuss those conditions leading to self-adjointness.


In this paper we pursue an approach to parabolic equations on
networks based on the theory of sesquilinear forms and associated
sub-Markovian semigroups. Following the approach presented in
previous works (see~\cite{Be88}, ~\cite[Chapt.~2]{AM94},
and~\cite{KMS06}), we draw some conclusions about several issues,
including $L^\infty$-contractivity of the semigroup governing the
problem, its $L^2\!-\!L^\infty$-stability and its analyticity in
suitable spaces of continuous as well as of $L^1$ functions over
the graph. The key arguments are upper estimates for the integral
kernel of the generated semigroup (which, roughly speaking, yields
the Green function of the problem). The theory of such
\emph{Gaussian estimates} has become a mature one that proves
extremely powerful when applied to diffusion problems on domains
-- and, as a matter of fact, on networks, too; we refer
to~\cite{Da89},~\cite{Ar97}, and~\cite{Ou04} for an introduction
to this subject.

While the integral kernel for the diffusion problem on a network has already been explicitly computed in~\cite{Ni87} 
in the special case of constant coefficients for the heat
operator, Gaussian estimates are, to our knowledge, new in the
case of \emph{variable} coefficients, even in the case of local
nodal conditions. Observe also that Gaussian estimates for the
semigroup that governs the \emph{discrete} diffusion problem on a
graph have recently been proved in~\cite{Km05}, cf.
also~\cite{BCK05}.

\section{General framework}

We consider a finite connected network, represented by a finite graph $G$ with $m$ edges $\me _1,\dots,\me _m$ and $n$ vertices $\mv_{1},\dots,\mv_{n}$. We normalize and parametrize the edges on the interval $[0,1]$. The structure of the network is given by the $n\times m$ matrix
$\Phi^+:=(\phi^+_{ij})$ and $\Phi^-:=(\phi^-_{ij})$ defined by
\begin{equation*}\label{incid}
\phi^+_{ij}:=
\left\{
\begin{array}{rl}
1, & \hbox{if } \me _j(0)=\mv _i,\\
0, & \hbox{otherwise},
\end{array}
\right.
\quad\hbox{and}\quad
\phi^-_{ij}:=
\left\{
\begin{array}{rl}
1, & \hbox{if } \me _j(1)=\mv _i,\\
0, & \hbox{otherwise}.
\end{array}
\right.
\end{equation*}
The $n\times m$ matrix $\Phi:=(\phi_{ij})$, defined by $\Phi:=\Phi^+-\Phi^-$, is thus the
\emph{incidence matrix} of the graph $G$. Further, let $\Gamma(\mv_i)$ be the set of all the indices of the edges having an endpoint at $\mv _i$, i.e.,
$$\Gamma(\mv _i):=\left\{j\in \{1,\ldots,m\}: \me _j(0)=\mv _i\hbox{ or } \me _j(1)=\mv _i\right\}, \qquad 1\leq i\leq n-1.$$
For the sake of simplicity, we denote the value of the functions $c_j(\cdot)$ and ${u}_j(t,\cdot)$ at $0$ or $1$ by $c_j(\mv_i)$ and $u_j(t,\mv_i)$, if $\me_j(0)=\mv _i\hbox{ or
} \me _j(1)=\mv _i$, respectively. With an abuse of notation, we also set $u'_j(t,\mv_i)=c_j(\mv_i):=0$ whenever $j\notin\Gamma(\mv_i)$.

In the literature networks are usually considered where Dirichlet
conditions are imposed on $n_0$ boundary vertices (i.e., vertices of
degree 1). Since no process takes place in such boundary vertices,
we may and do identify all of them. Thus, we instead consider an
equivalent diffusion equation on a rearranged graph where these
$n_0$ nodes of degree 1 are replaced by \emph{only one node of
degree $n_0$}, on which a Dirichlet condition is imposed. Without
loss of generality we assume that this node is $\mv_{n}$.

For $t\geq 0$ we are now in the position to consider the network
diffusion problem
\begin{equation*}
\left\{\begin{array}{rcll}
\dot{u}_j(t,x)&=& (c_j u_j')'(t,x), &x\in(0,1), \; j=1,\dots,m,\\
u_j(t,\mv _i)&=&u_\ell (t,\mv _i)=: d^u_i(t), &j,\ell\in \Gamma(\mv _i),\; i=1,\ldots,n,\\
\sum_{h=1}^{n-1} b_{ih} d^u_h(t)&=& \sum_{j=1}^m \phi_{ij} c_j(\mv_i) u'_j(t,\mv _i), & i=1,\ldots,n-1,\\
d^u_n(t)&=&0,
\end{array}
\right.
\end{equation*}
on the network, with initial conditions
$$u_j(0,x)=u_{0j}(x), \qquad    x\in (0,1), \; j=1,\dots,m.$$
Throughout this paper we assume that the coefficients $c_j$
satisfy $0<c_j\in C^1[0,1]$, $j=1,\ldots,m$, while for the time
being the $(n-1)\times (n-1)$ matrix $B:=(b_{ih})_{1\leq i,h\leq
n-1}$ is arbitrary, i.e., $B\in M_{n-1}({\mathbb C})$. The third
equation above is a generalized, non-local Kirchhoff-type law on
the first $n-1$ nodes, while the fourth one prescribes a Dirichlet
condition in $\mv_{n}$.


We introduce the $n\times m$ matrices $\Phi^+_w:=(\omega^+_{ij})$ and $\Phi^-_w:=(\omega^-_{ij})$ defined by
\begin{equation*}
\omega^+_{ij}:=
\left\{
\begin{array}{ll}
c_j(\mv_i), & \hbox{if } \phi^+_{ij}=1\hbox{ and }i\leq n-1,\\
0, & \hbox{otherwise},
\end{array}
\right.
\end{equation*}
and
\begin{equation*}
\omega^-_{ij}:=
\left\{
\begin{array}{ll}
c_j(\mv_i), & \hbox{if } \phi^-_{ij}=1\hbox{ and }i\leq n-1,\\
0, & \hbox{otherwise}.
\end{array}
\right.
\end{equation*}

In the following we will repeatedly and without further notice write the functions $u_j$ in vector form, i.e., $$u\equiv\begin{pmatrix}u_1\\ \vdots\\ u_m\end{pmatrix}.$$
With these notations, one can directly check that the second, third and fourth equations of our network diffusion problem can be rewritten as
\begin{equation*}\label{cont}
\exists d^u(t)\in {\mathbb C}^{n-1}\times \{0\} \hbox{ s.t. }
\hspace{-10pt}
\begin{array}{rl}
&(\Phi^+)^\top d^u(t)=u(t,0),\; (\Phi^-)^\top d^u(t)=u(t,1),\\
&\hbox{and }\Phi_w^- u'(t,1)-\Phi_w^+ u'(t,0)={\mathbb B}d^u(t),
\end{array}
\quad t\geq 0,
\end{equation*}
where we have introduced the $n\times n$ matrices
$${\mathbb B}:=\begin{pmatrix} B & 0\\ 0 & 0\end{pmatrix}.$$


\begin{rem}
Inspired by the non-local boundary conditions introduced above, and motivated by the theory of ephaptic coupling of myelinated nerve fibers (see~\cite[Chapt.~8]{Sc02}) one may also consider an ever more general parabolic system over the network, where for $t\geq 0$ the first equation of the problem is replaced by
$$\dot{u}_j(t,x)= \sum_{\ell=1}^m (c_{j\ell} u_\ell')'(t,x),\qquad x\in(0,1), \; j=1,\dots,m.$$
Under suitable assumptions on the diffusion coefficients $c_{ij}$
one can study well-posedness and qualitative properties of such a
system. However, this introduces new technical difficulties, as
for instance one sees that usual Kirchhoff- conditions do not
ensure dissipativity of such a system any more, and one has to
formulate appropriate, more general conditions in the nodes. We will discuss this kind of
problems in the forthcoming paper~\cite{CM06b}.
\end{rem}

It is already well-known that the above network diffusion problem is well-posed: in fact, this has been shown in a Hilbert space setting in~\cite{Be88} for the case of variable coefficients and diagonal $B$, cf. also the references therein for earlier results on less general cases. We also remark that, at least in the case of $c_1=\ldots=c_m\equiv1$ and $b_{ih}=0$, $i,h=1,\ldots,n$. S. Nicaise has derived in~\cite{Ni87} an explicit formula for the solution, thus showing the well-posedness of the problem in all $L^p$-type spaces as well as on suitable spaces of continuous functions (see also~\cite{Ni86}). Our first goal is to to establish a meaningful $L^p$-theory for the general case of variable diffusion coefficients and non-local node conditions. To this aim, we need the following.

\begin{defi}\label{xplp}
For given functions $f_j:[0,1]\to{\mathbb C}$, $j=1,\ldots,n$, we define a mapping $Uf:[0,m]\to {\mathbb C}$ by
\begin{equation*}
Uf(x):=\tilde{f}(x):=f_j(x-j+1)\qquad \hbox{if }x\in (j-1,j),\; j=1,\ldots,m.
\end{equation*}
\end{defi}

With this notation one sees that the following holds.

\begin{lemma}
The mapping $U$ is one-to-one from $X_p:=\left(L^p(0,1)\right)^m$ onto $L^p(0,m)$ for all $p\in [1,\infty]$, and in fact it is an isometry if we endow $\left(L^p(0,1)\right)^m$ with the canonical $l^p$-norm, i.e.,
$$\Vert f\Vert_{X_p}:=\left(\sum_{j=1}^m \Vert f_j\Vert_{L^p(0,1)}^p\right)^\frac{1}{p},\qquad 1\leq p<\infty,$$
or
$$\Vert f\Vert_{X_\infty}:=\max_{1\leq j\leq m} \Vert f_j\Vert_{L^\infty(0,1)}.$$
\end{lemma}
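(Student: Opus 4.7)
The plan is to verify the three claims---well-definedness/linearity, bijectivity, and the isometry property---separately, in each case by reducing to the behavior on a single subinterval $(j-1,j)$ via the translation $y=x-j+1$.

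First, I would observe that $U$ is unambiguously defined almost everywhere on $[0,m]$: the endpoints $x=j$ form a null set, so the fact that Definition~\ref{xplp} leaves $Uf$ unspecified there is harmless, and linearity of $U$ is immediate from the pointwise definition. For bijectivity I would exhibit an inverse directly, setting, for any $g\in L^p(0,m)$,
\begin{equation*}
(Vg)_j(y):=g(y+j-1),\qquad y\in(0,1),\ j=1,\ldots,m,
\end{equation*}
and checking that $V$ maps into $X_p$ and satisfies $UVg=g$ and $VUf=f$ a.e.

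The isometry statement is a one-line change of variables once the subintervals are separated. For $1\le p<\infty$, additivity of the Lebesgue integral over the partition $[0,m]=\bigcup_{j=1}^m[j-1,j]$ gives
\begin{equation*}
\|Uf\|_{L^p(0,m)}^p=\sum_{j=1}^m\int_{j-1}^{j}|f_j(x-j+1)|^p\,dx=\sum_{j=1}^m\int_0^1|f_j(y)|^p\,dy=\|f\|_{X_p}^p,
\end{equation*}
after substituting $y=x-j+1$ in each summand. For $p=\infty$, I would use that the essential supremum over a finite union of intervals equals the maximum of the essential suprema on each piece, so that $\|Uf\|_{L^\infty(0,m)}=\max_{1\le j\le m}\operatorname{ess\,sup}_{x\in(j-1,j)}|f_j(x-j+1)|=\max_j\|f_j\|_{L^\infty(0,1)}=\|f\|_{X_\infty}$. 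Since $U$ is a linear isometry between normed spaces, it is automatically injective; combined with the explicit inverse $V$ above, it is a bijection. There is no substantive obstacle here---the lemma is bookkeeping that legitimizes identifying $X_p$ with $L^p(0,m)$ in the sequel.
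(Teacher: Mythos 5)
Your proof is correct, and it carries out exactly the routine verification that the paper omits: the lemma is stated there without proof (``With this notation one sees that the following holds''), the implicit argument being precisely the change of variables $y=x-j+1$ on each subinterval together with the explicit inverse. Nothing is missing; the treatment of the null set of endpoints, the $p=\infty$ case via essential suprema, and the bijectivity via the explicit inverse $V$ are all handled properly.
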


In the following we will
hence regard $X_p$ as an $L^p$-space over a finite measure space, so that $X_p\hookrightarrow X_q$ for all $1\leq q\leq p\leq\infty$. Moreover, each $X_p$ is a Banach lattice, and its positive cone can be identified with the positive cone of $L^p(0,m)$.

\section{Basic results}

We are now in the position to consider an abstract reformulation of our diffusion problem. First we consider the (complex) Hilbert space $X_2=\left(L^2(0,1)\right)^m$ endowed with the natural inner product
$$(f\mid g)_{X_2}:= \sum_{j=1}^m \int_0^1 f_j(x)\overline{g_j(x)} dx,\qquad
f,g\in X_2.$$
On $X_2$ we define an operator
\begin{equation}\label{amain}
A:=\begin{pmatrix}
\frac{d}{dx}\left(c_1 \frac{d}{dx}\right) & & 0\\
 & \ddots &\\
0 & & \frac{d}{dx}\left(c_m \frac{d}{dx}\right) \\
\end{pmatrix}
\end{equation}
with domain
\begin{equation}\label{domamain}
D(A):=\left\{f\in \left(H^2(0,1)\right)^m:
\hspace{-10pt}
\begin{array}{rl}
& \exists d^f\in {\mathbb C}^{n-1}\times \{0\} \hbox{ \rm{s.t.}} \\
&(\Phi^+)^\top d^f=f(0),\; (\Phi^-)^\top d^f=f(1),\\
&\hbox{ and }\Phi_w^- f'(1)-\Phi_w^+ f'(0)={\mathbb B}d^f\\
\end{array}
\right\}.
\end{equation}

We can finally rewrite the concrete network diffusion problem as an abstract Cauchy problem
\begin{equation}\tag{ACP}
\left\{\begin{array}{rcll}
\dot{u}(t)&=& Au(t),\qquad   &t\geq 0,\\
u(0)&=&u_{0},\\
\end{array}
\right.
\end{equation}
on $X_2$. In order to discuss the semigroup generator property of $A$ it is convenient to use a variational method.

Recall that we are assuming the network to be connected throughout the paper. This is crucial for the proof of the following.

\begin{lemma}\label{v0}
The linear space
\begin{equation*}\label{domformmain}
V_0:=\left\{f\in \left(H^1(0,1)\right)^m:\!\!\!\!\!\!\!
\begin{array}{ll}
&\exists d^f\in {\mathbb C}^{n-1}\times \{0\} \hbox{ s.t. }\\
&(\Phi^+)^\top d^f=f(0)\hbox{ and } (\Phi^-)^\top d^f=f(1)
\end{array}\right\}.
\end{equation*}
is densely and compactly embedded in ${\mathcal X}_2$. It becomes a Hilbert space when equipped with the inner product
\begin{equation}\label{normeq}
(f\mid g)_{V_0}:=\sum_{j=1}^m\int_0^1 f'_j(x)\overline{g'_j(x)} dx,\qquad f,g\in V_0.
\end{equation}
\end{lemma}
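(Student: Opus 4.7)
The plan is to check three things: (i) the form $(\cdot\mid\cdot)_{V_0}$ is actually positive definite on $V_0$, (ii) the resulting inner-product space is complete, and (iii) the embedding into $X_2$ is dense and compact. The key input for (i)--(ii) will be a Poincar\'e-type inequality on $V_0$, where connectedness of $G$ together with the Dirichlet condition $d^f_n=0$ enters in an essential way.

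\emph{Step 1 (positive definiteness).} If $\|f\|_{V_0}=0$ then every component $f_j$ is constant on $(0,1)$, say $f_j\equiv\alpha_j$. The defining constraint yields a vector $d^f\in\mathbb{C}^{n-1}\times\{0\}$ with $(\Phi^+)^\top d^f=f(0)$ and $(\Phi^-)^\top d^f=f(1)$, which translates into $\alpha_j=d^f_i$ whenever $\me_j(0)=\mv_i$ or $\me_j(1)=\mv_i$. Hence $d^f_i=d^f_{i'}$ for every pair of vertices joined by an edge; by connectedness of $G$ and $d^f_n=0$ all $d^f_i$ vanish, so $f\equiv 0$.

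\emph{Step 2 (completeness).} Since the trace maps $H^1(0,1)\ni g\mapsto g(0),g(1)\in\mathbb{C}$ are continuous, the constraints defining $V_0$ are closed linear conditions, so $V_0$ is closed in $(H^1(0,1))^m$. I would then prove the Poincar\'e inequality
\begin{equation*}
\|f\|_{X_2}\le C\,\|f\|_{V_0},\qquad f\in V_0,
\end{equation*}
by the standard compactness--contradiction argument: if it failed, pick $f_n\in V_0$ with $\|f_n\|_{X_2}=1$ and $\|f_n'\|_{X_2}\to 0$; then $(f_n)$ is bounded in $(H^1(0,1))^m$, so by the edgewise Rellich--Kondrachov theorem a subsequence converges in $X_2$ to some $f$; since the derivatives go to $0$, the convergence actually takes place in $(H^1(0,1))^m$ and the limit satisfies $f'=0$. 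Continuity of traces gives $f\in V_0$, and Step 1 forces $f=0$, contradicting $\|f\|_{X_2}=1$. This establishes the claimed inequality; combined with the trivial bound $\|f\|_{V_0}\le\|f\|_{(H^1)^m}$ it shows that $\|\cdot\|_{V_0}$ is equivalent to the restriction of the $(H^1)^m$-norm on $V_0$, so $V_0$ is a Hilbert space for $(\cdot\mid\cdot)_{V_0}$.

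\emph{Step 3 (density and compactness into $X_2$).} Since $(C_c^\infty(0,1))^m\subset V_0$ (just take $d^f=0$) and this subspace is already dense in $X_2=(L^2(0,1))^m$, density is immediate. For compactness, Step 2 gives a continuous inclusion $V_0\hookrightarrow (H^1(0,1))^m$, and the latter embeds compactly into $(L^2(0,1))^m=X_2$ by Rellich--Kondrachov applied on each edge, so the composition $V_0\hookrightarrow X_2$ is compact.

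The only genuinely nontrivial point is Step 1, or equivalently the Poincar\'e inequality in Step 2: this is precisely where the two standing assumptions on the network --- connectedness and the presence of at least one Dirichlet vertex $\mv_n$ --- are used in an unavoidable way. Without either, constant functions (or components thereof on disconnected pieces) would lie in the kernel of $\|\cdot\|_{V_0}$ and the statement of the lemma would fail.
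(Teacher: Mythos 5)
Your proof is correct, and its overall architecture (Poincar\'e inequality $\Rightarrow$ norm equivalence $\Rightarrow$ Hilbert space; $(C^\infty_c(0,1))^m\subset V_0\subset (H^1(0,1))^m$ plus Rellich--Kondrachov for density and compactness) matches the paper's. The one genuine difference is how the Poincar\'e inequality is obtained. The paper proves it constructively: for each edge it chooses, by connectedness, a chain of edges leading to the Dirichlet vertex $\mv_{n}$, writes $f_{j_r}(x)=d^f_{i_{r-1}}+\int_0^x f'_{j_r}(s)\,ds$, and telescopes the nodal values down the chain until the condition $d^f_{n}=0$ kills the constant term; this yields the explicit bound $\sum_j\int_0^1|f_j|\le m\sum_j\int_0^1|f_j'|$. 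You instead isolate the role of connectedness and the Dirichlet vertex in the purely algebraic observation that $V_0$ contains no nonzero locally constant function, and then derive the inequality by the standard compactness--contradiction argument (using that $V_0$ is closed in $(H^1(0,1))^m$ via continuity of the traces, which the paper leaves implicit). Your route is softer and arguably cleaner in pinpointing exactly where the hypotheses enter, at the price of losing the explicit constant; the paper's route gives a quantitative constant depending only on $m$, which can matter if one wants to track how the ellipticity constants of the form $\EuFrak{a}$ depend on the graph. Both arguments are complete.
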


\begin{proof}
It is well-known that $V_0$ is  densely and compactly imbedded in $X_2$: this follows from the inclusions $\left(C^\infty_c(0,1)\right)^m\subset V_0\subset (H^1(0,1))^m$ and the Rellich--Khondrakov Theorem.

We are going to show that the inner product defined in~\eqref{normeq} is equivalent to that induced by the Hilbert space $(H^1(0,1))^m$, i.e., to
$$(f\mid g):=\sum_{j=1}^m\int_0^1 \left(f'_j(x)\overline{g'_j(x)}+f_j(x)\overline{g_j(x)}\right) dx,\qquad f,g\in (H^1(0,1))^m.$$

Let $\me_j$ be a general edge. By the connectedness of $G$ we can find a set of edges $\me_{j_1},\ldots,\me_{j_r}$ linking any $x\in\me_j$ with $\mv_{n}$. More precisely, there exist ${j_1},\ldots,{j_r}\in\{1,\ldots,m\}$ such that
\begin{itemize}
\item $\phi_{nj_1}^- = 1$, i.e., $\mv_{n}$ is the head of $\me_{j_1}$;
\item for all $h=1,\ldots,r$ there exists a vertex $\mv_{i_{h+1}}$ such that $\phi_{i_{h+1}j_h}^+=1=\phi_{i_{h+1}j_{h+1}}^-$, i.e, $\mv_{i_{h+1}}$ is the tail of $\me_{j_h}$ and the head of $\me_{j_{h+1}}$;
\item $\me_{j_r}=\me_j$.
\end{itemize}
Then, for every $f\in V$ and at any $x\in(0,1)$ one has $f_{j_r}(x)= d^f_{i_{r-1}}+ \int_0^x     f'_{j_r}(s) ds$, and therefore
\begin{eqnarray*}
\vert f_{j_r}(x)\vert &\leq &\vert d^f_{i_{r-1}}\vert + \int_0^1 \vert f'_{j_r}(s)\vert ds\\
& \leq & \vert d^f_{i_{r-2}}\vert + \int_0^1 \vert f'_{j_{r-1}}(s)\vert ds +\int_0^1 \vert f'_{j_r}(s)\vert ds\\
&\leq& \ldots\\
& \leq & \vert d^f_{n}\vert + \sum_{h=1}^r \int_0^1 \vert f'_{j_{h}}(s)\vert ds\\
&=& \sum_{h=1}^r \int_0^1 \vert f'_{j_{h}}(s)\vert ds,
\end{eqnarray*}
due to the Dirichlet condition satisfied at $\mv_{n}$ by all $f\in V$. We conclude that
$$\sum_{j=1}^m \int_0^1 \vert f_j(x)\vert dx \leq m \sum_{j=1}^m \int_0^1 \vert f_j'(x)\vert dx.$$
Having proved such a Poincar\'e-type inequality, the claim follows directly.
\end{proof}

The following lemma extends known results (see e.g.~\cite{AM86},~\cite{Ni87}, and~\cite{Be88}) to the case of non-local node conditions. Thus, for the sake of self-containedness we do not omit the proofs, although basic elements and techniques involved are essentially well-known.

\begin{prop}\label{main}
Consider the sesquilinear form
\begin{equation*}\label{formmain}
\ea(f,g):=\sum_{j=1}^m\int_0^1 c_j(x) f'_j(x) \overline{g'_j(x)} dx -\sum_{i,h=1}^{n-1} b_{ih} d^f_h\overline{d^g_i},\qquad f,g\in V_0,
\end{equation*}
on the Hilbert space $X_2$. Then $\ea$ enjoys the following properties:
\begin{itemize}
\item $\ea$ is $X_2$-elliptic, i.e.,: there exist $\omega\in{\mathbb R}$, $\alpha>0$ such that
$${\rm Re}\ea(f,f)+\omega\Vert f\Vert^2_{X_2}\geq \alpha \Vert f\Vert^2_{V_0}\qquad \hbox{for all }f\in V_0;$$
\item $\ea$ is continuous, i.e., there exists $M>0$ such that
$$\vert \ea(f,g)\vert \leq  M\Vert f\Vert_{V_0} \Vert g\Vert_{V_0}\qquad \hbox{ for all }f,g\in V_0;$$
\item $\ea$ is symmetric, i.e.,
 $$\ea(f,g)=\overline{\ea(g,f)}\qquad \hbox{for all }f,g \in V_0,$$ if and only if $B$ is self-adjoint;
\item $\ea$ is coercive, i.e., it is $X_2$-elliptic with $\omega=0$, if and only if it is accretive, i.e.,
$${\rm Re}\ea(f,f)\geq 0 \qquad \hbox{ for all }f \in V_0,$$
if and only if $B$ is dissipative.
\end{itemize}
\end{prop}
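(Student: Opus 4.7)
The strategy is to treat the four assertions one by one, exploiting the decomposition $\ea(f,g)=\ea_1(f,g)-\ea_2(f,g)$ with $\ea_1(f,g):=\sum_{j=1}^m\int_0^1 c_j f'_j\overline{g'_j}\,dx$ and $\ea_2(f,g):=\sum_{i,h=1}^{n-1}b_{ih}d^f_h\overline{d^g_i}$; the second piece is the sesquilinear form on ${\mathbb C}^{n-1}$ associated with $B$, evaluated at the boundary vectors $d^f|_{{\mathbb C}^{n-1}}$ and $d^g|_{{\mathbb C}^{n-1}}$. The fundamental tool is the continuous embedding $V_0\hookrightarrow (C[0,1])^m$, coming from $H^1(0,1)\hookrightarrow C[0,1]$ together with the $V_0$-norm equivalence from Lemma~\ref{v0}, which yields the pointwise trace bound $|d^f_i|\le C\|f\|_{V_0}$ uniformly in $f\in V_0$ and $1\le i\le n-1$.

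Continuity follows at once: $|\ea_1(f,g)|\le(\max_j\|c_j\|_\infty)\|f\|_{V_0}\|g\|_{V_0}$ by Cauchy--Schwarz, while $|\ea_2(f,g)|\le\|B\|\sum_i|d^f_i|\,|d^g_i|\le C^2(n-1)\|B\|\|f\|_{V_0}\|g\|_{V_0}$ via the trace bound. For $X_2$-ellipticity I would combine the elementary lower estimate $\ea_1(f,f)\ge c_0\|f\|_{V_0}^2$, with $c_0:=\min_{j,x}c_j(x)>0$, with an Ehrling-type refinement $|d^f_i|^2\le\varepsilon\|f\|_{V_0}^2+C_\varepsilon\|f\|_{X_2}^2$, available from the compactness in Lemma~\ref{v0}, choose $\varepsilon$ so small that $(n-1)\|B\|\varepsilon<c_0/2$, and absorb the resulting bad term into $\ea_1$. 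This yields $\mathrm{Re}\,\ea(f,f)+\omega\|f\|_{X_2}^2\ge (c_0/2)\|f\|_{V_0}^2$ for a suitable $\omega>0$.

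Symmetry of $\ea$ is a direct algebraic check since $\ea_1$ is automatically Hermitian: after relabelling summation indices, $\overline{\ea_2(g,f)}=\sum_{i,h}\overline{b_{hi}}\,d^f_h\overline{d^g_i}$, so equality with $\ea_2(f,g)$ for every $f,g\in V_0$ is equivalent to $b_{ih}=\overline{b_{hi}}$ for all $i,h$. The ``only if'' direction relies on the surjectivity observation that every prescribed $y\in{\mathbb C}^{n-1}$ is realised as $d^f|_{{\mathbb C}^{n-1}}$ for some $f\in V_0$---for instance by interpolating linearly on each edge between the prescribed nodal values (with $0$ at $\mv_n$).

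Finally I would split the three-way equivalence into ``$B$ dissipative $\Rightarrow\ea$ coercive $\Rightarrow\ea$ accretive $\Rightarrow B$ dissipative''. The first implication follows because $-\mathrm{Re}\,\ea_2(f,f)\ge0$ under dissipativity of $B$, whence $\mathrm{Re}\,\ea(f,f)\ge c_0\|f\|_{V_0}^2$; the second is immediate. The third is where I expect the main obstacle: given $y\in{\mathbb C}^{n-1}$, one must produce a test function $f\in V_0$ with $d^f|_{{\mathbb C}^{n-1}}=y$ whose Dirichlet-type energy $\sum_j\int c_j|f'_j|^2$ is small enough that the accretivity inequality forces $\mathrm{Re}\langle By,y\rangle\le 0$. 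A careful lifting of $y$ to $V_0$ that exploits both the connectedness of $G$ and the Dirichlet node $\mv_n$, possibly combined with an appropriate scaling/minimisation argument over all such liftings, will be essential to close the loop; this is the step whose details deserve the most care.
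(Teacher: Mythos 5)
Your treatment of continuity and symmetry matches the paper's, and your route to $X_2$-ellipticity --- absorbing the boundary term via an Ehrling-type inequality $|d^f_i|^2\le\varepsilon\Vert f\Vert_{V_0}^2+C_\varepsilon\Vert f\Vert_{X_2}^2$ obtained from the compactness in Lemma~\ref{v0} --- is a legitimate and arguably more self-contained alternative to the paper's argument, which instead embeds $C_0(G)$ into an interpolation space of order $\frac12$ between $V_0$ and $X_2$ via Gagliardo--Nirenberg and invokes a perturbation lemma from~\cite{Mug08}. Both yield the same conclusion, so up to and including the implications ``$B$ dissipative $\Rightarrow$ $\ea$ coercive $\Rightarrow$ $\ea$ accretive'' your proposal is sound.

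The genuine gap is exactly where you place it: the implication ``$\ea$ accretive $\Rightarrow$ $B$ dissipative''. You leave it open, and the strategy you sketch --- lift a given $y\in{\mathbb C}^{n-1}$ to some $f\in V_0$ with nodal values $y$ and Dirichlet energy so small that accretivity forces ${\rm Re}\langle By,y\rangle\le 0$ --- cannot be made to work. Because of the Dirichlet condition at $\mv_n$ and the connectedness of $G$, every lifting of a fixed $y\neq 0$ has energy $\sum_j\int_0^1 c_j|f_j'|^2\,dx$ bounded below by a strictly positive constant $E(y)$ (along a path from a node with $y_i\neq 0$ to $\mv_n$ the function must descend to zero, which by Cauchy--Schwarz costs at least $|y_i|^2\min_{j,x}c_j(x)$ divided by the path length), and since both $E(\lambda y)=\lambda^2E(y)$ and ${\rm Re}\langle B(\lambda y),\lambda y\rangle=\lambda^2{\rm Re}\langle By,y\rangle$ are quadratic, no scaling or minimisation over liftings removes this obstruction: accretivity is equivalent to ${\rm Re}\langle By,y\rangle\le E(y)$ for all $y$, which is strictly weaker than dissipativity. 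Concretely, on a single edge with $c\equiv 1$, Dirichlet condition at $\me_1(1)=\mv_2=\mv_n$ and $B=(1/2)$, one has $|f(0)|^2=\bigl|\int_0^1f'(x)\,dx\bigr|^2\le\int_0^1|f'(x)|^2\,dx$, hence $\ea(f,f)\ge\frac12\int_0^1|f'|^2\ge 0$ although $B$ is not dissipative. So the ``only if'' half of the last bullet is not provable by your route; note that the paper's own proof dismisses this step with ``due to the arbitrarity of the nodal values\dots'', which overlooks precisely the coupling between the nodal values and the unavoidable Dirichlet energy that you correctly identified as the crux.
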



\begin{proof}
First of all, we show that $\mathfrak a$ is $X_2$-elliptic.   We observe that the leading term in the form $\mathfrak a$,
  i.e., 
  $${\mathfrak a}_0(f,g):=\sum_{j=1}^m\int_0^1 c_j(x) f'_j(x) \overline{g'_j(x)} dx,\qquad f,g\in V_0,$$ 
  is clearly $X_2$-elliptic and continuous. Furthermore, it follows from the Gagliardo--Nirenberg
  inequality
$$ \|u\|_\infty \le Const. \|u\|^\frac{1}{2}_{L^2}\|u\|^\frac{1}{2}_{H^1}\qquad \hbox{for all }u\in H^1(0,1),$$
see e.g.~\cite[Th\'eo.~VIII.7]{Bre83}, that the space 
$$C_0(G):=\left\{f\in \left(C[0,1]\right)^m: \!\! \begin{array}{ll}
&\exists d^f\in {\mathbb C}^{n-1}\times \{0\} \hbox{ s.t. }\\
&(\Phi^+)^\top d^f=f(0)\hbox{ and }(\Phi^-)^\top d^f=f(1)
                                                  \end{array}
 \right\},$$
 is embedded in an interpolation space of order $\frac{1}{2}$ between $V_0$ and $X_2$, and obviously $${\mathfrak a}_1(f,g):= -\sum_{i,h=1}^{n-1} b_{ih} d^f_h\overline{d^g_i}$$
 is bounded from $C_0({\mathsf G})\times C_0({\mathsf G})$ to ${\mathbb C}$. Thus~\cite[Lemma~2.1]{Mug08} applies and one concludes that also their sum ${\mathfrak a}={\mathfrak a}_0+{\mathfrak a}_1$ is $X_2$-elliptic.

Set  now
$$C:=\max_{1\leq j\leq m}\max_{x\in[0,1]}c_j(x)\quad\hbox{and}\quad b:=\max_{1\leq i,h\leq n-1} \vert b_{ih}\vert .$$

Recall that by~\eqref{normeq} there exists a constant $N$ such that $\sum_{j=1}^m\Vert f_j\Vert_{H^1(0,1)}\leq N\Vert f\Vert_{V_0}$. Take $f,g\in V_0$ and observe that
\begin{eqnarray*}
\vert \ea(f,g)\vert&\leq& C\left\vert \sum_{j=1}^m \int_0^1 f'_j(x) \overline{g'_j(x)} dx\right\vert+
\sum_{i,h=1}^{n-1} \vert b_{ih}\vert \vert d^f_i\vert \vert d^g_h\vert\\
&\leq& C\vert ( f\mid g)_{V_0}\vert+
b \sum_{i,h=1}^{n-1}  N^2 \Vert f\Vert_{V_0} \Vert g\Vert_{V_0}\\
&\leq & M \Vert f\Vert_{V_0} \Vert g\Vert_{V_0}
\end{eqnarray*}
by the Cauchy--Schwartz inequality, where $M:=C+bN^2(n-1)^2$. This completes the proof of the continuity of $\ea$.

Since by assumptions the coefficients $c_1,\ldots,c_m$ are strictly positive,
$$\ea(f,g)=\overline{\ea(g,f)} \qquad\hbox{if and only if}\qquad \sum_{i,h=1}^{n-1} b_{ih} d^f_h\overline{d^g_i}=\sum_{i,h=1}^{n-1} \overline{b_{ih}} d^f_i\overline{d^g_h}, \qquad f,g\in V_0,$$
and this is the case if and only if $B$ is self-adjoint.

Let finally $\ea$ be coercive. Then it is clearly accretive. If $\ea$ is accretive, then a direct computation shows that
$${\rm Re}\sum_{i,h=1}^{n-1} b_{ih} d^f_h\overline{d^f_i}\leq 0$$
holds for all $f\in V_0$. Due to the arbitrarity of the nodal values $d^f_2,\ldots, d^f_n$ of $f\in V_0$, one sees that this already implies that $B$ is dissipative. Finally, if $B$ is dissipative, then there holds
\begin{eqnarray*}
{\rm Re}{\ea}(f,f) &=& \sum_{j=1}^m\int_0^1 c_j(x) \vert f'_j(x)\vert^2 dx-{\rm Re}\sum_{i,h=1}^{n-1} b_{ih} d^f_h \overline{d^f_i}\\
&\geq& c \sum_{j=1}^m\int_0^1 \vert f'_j(x)\vert^2 dx,
\end{eqnarray*}
where 
$$c:=\frac{1}{2}\min_{1\leq j\leq m}\min_{x\in[0,1]} \Vert c_j(x)\Vert>0.$$
By Lemma~\ref{v0}, we have thus obtained that ${\rm Re}{\ea}(f,f)\geq \alpha \Vert f\Vert^2_{V_0}$ for some $\alpha>0$. This concludes the proof.
\end{proof}

\begin{cor}\label{sa+diss}
The operator associated with $\ea$ is densely defined, sectorial, and resolvent compact, hence it generates a strongly continuous, analytic, compact semigroup $(T_2(t))_{t\geq 0}$ on $X_2$. If moreover $B$ is dissipative (resp., self-adjoint), then $(T_2(t))_{t\geq 0}$ is contractive and uniformly exponentially stable (resp., self-adjoint).
\end{cor}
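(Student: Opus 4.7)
The plan is to feed the three structural properties of $\ea$ established in Proposition~\ref{main} into the standard form-theoretic construction of analytic semigroups, as developed for instance in~\cite{Ou04}. Since $V_0$ endowed with the inner product~\eqref{normeq} is a Hilbert space densely and compactly embedded in $X_2$ by Lemma~\ref{v0}, and since Proposition~\ref{main} provides both continuity and $X_2$-ellipticity of $\ea$, the form is closed, densely defined, and sectorial. Kato's representation theorem then produces a densely defined, sectorial operator $A$ on $X_2$ generating a strongly continuous analytic semigroup $(T_2(t))_{t\geq 0}$. For resolvent compactness I would use Lemma~\ref{v0} once more: for $\lambda$ with sufficiently large real part, ellipticity of $\ea$ guarantees that $R(\lambda,A)\colon X_2 \to V_0$ is bounded, and post-composing with the compact inclusion $V_0\hookrightarrow X_2$ yields a compact resolvent; combined with analyticity this forces $T_2(t)$ to be compact for every $t>0$.

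To obtain contractivity and uniform exponential stability when $B$ is dissipative, I would invoke the equivalence from Proposition~\ref{main}: dissipativity of $B$ yields coercivity of $\ea$, i.e., $\mathrm{Re}\,\ea(f,f)\geq\alpha\|f\|_{V_0}^2$. The Poincar\'e-type inequality established inside the proof of Lemma~\ref{v0}---which exploits the Dirichlet condition at $\mv_n$ together with the connectedness of $G$---then supplies $\|f\|_{X_2}\leq C\|f\|_{V_0}$, which upgrades the estimate to $\mathrm{Re}\,\ea(f,f)\geq\tilde\alpha\|f\|_{X_2}^2$ for some $\tilde\alpha>0$. Standard form theory then delivers $\|T_2(t)\|_{\mathcal{L}(X_2)}\leq e^{-\tilde\alpha t}$, which is simultaneously contractivity and uniform exponential stability. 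For the self-adjointness claim, symmetry of $\ea$ (equivalent, by Proposition~\ref{main}, to self-adjointness of $B$) transfers through Kato's theorem to self-adjointness of $A$ and hence of $(T_2(t))_{t\geq 0}$.

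The whole argument is essentially routine form-theoretic machinery, and no single step presents a genuine obstacle. The subtlest point is the upgrade from $V_0$-coercivity to $X_2$-coercivity needed for exponential decay, which requires noticing that the Poincar\'e-type bound established inside the proof of Lemma~\ref{v0} provides precisely the missing ingredient; sign conventions relating $\ea$ with its associated operator also need to be handled consistently.
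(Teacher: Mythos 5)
Your argument is correct, and the first half (generation, analyticity, compact resolvent via the compact embedding $V_0\hookrightarrow X_2$ from Lemma~\ref{v0}) coincides with the paper's, which simply cites Proposition~\ref{main} together with \cite[Prop.~1.51, Thm.~1.52]{Ou04}. Where you genuinely diverge is the proof of uniform exponential stability. The paper argues spectrally: it takes $f$ in the null space of the associated operator, uses $0=\mathrm{Re}\,\ea(f,f)\geq c\sum_j\int_0^1|f_j'|^2$ to conclude that $f$ is edgewise constant, hence globally constant by node continuity, hence $f\equiv d^f_n=0$; injectivity plus compact resolvent and analyticity then give $s(A)<0$ and $\omega_0(T_2)=s(A)<0$. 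You instead upgrade the $V_0$-coercivity $\mathrm{Re}\,\ea(f,f)\geq\alpha\|f\|_{V_0}^2$ to $X_2$-coercivity via the Poincar\'e-type inequality hidden in the proof of Lemma~\ref{v0} (which indeed rests on exactly the same two ingredients the paper's injectivity argument uses: connectedness of $G$ and the Dirichlet condition at $\mv_n$), and then read off $\|T_2(t)\|_{\mathcal{L}(X_2)}\leq e^{-\tilde\alpha t}$ from Lumer--Phillips applied to $A+\tilde\alpha$. Your route is more quantitative --- it produces an explicit decay rate and yields contractivity and stability in one stroke, without needing compactness of the resolvent or the identity $\omega_0=s(A)$ for analytic semigroups --- while the paper's route is shorter given that compact resolvent has already been established and generalizes to situations where no uniform $X_2$-coercivity is available. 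Both are complete; just make sure, as you note, that the sign convention $\ea(f,g)=-(Af\mid g)_{X_2}$ is the one under which accretivity of the form corresponds to dissipativity of the generator.
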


\begin{proof}
It follows from Proposition~\ref{main} and~\cite[Prop.~1.51, Thm.~1.52]{Ou04} that the operator associated with ${\ea}$ is densely defined, sectorial, and resolvent compact.

Let $B$ be dissipative and take $f$ in the null space of the operator associated with $\ea$. Then by the proof of the above lemma there exists $c>0$ such that
$$0={\rm Re}{\ea}(f,f)\geq c \sum_{j=1}^m\int_0^1  \vert f'_j(x)\vert^2 dx.$$
This means that $f_j$ is constant for all $j=1,\ldots, m$, hence $f$ is constant on the whole network, due to the continuity condition in the nodes. In particular, $f\equiv d^f_n	=0$, hence the operator associated with $\ea$ is one-to-one.
\end{proof}

In the following we are able to identify the operator associated with $\ea$. The following result is already known in the literature in the special case where the matrix $B$ is diagonal, see e.g.~\cite[Chapt.~2]{AM94}.

\begin{lemma}\label{ident}
The operator associated with the form $\ea$ is $\left(A,D(A)\right)$ defined in \eqref{amain}--\eqref{domamain}.
\end{lemma}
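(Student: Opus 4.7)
The plan is to prove the two inclusions $D(A)\subset D(A_\ea)$ and $D(A_\ea)\subset D(A)$, where $(A_\ea,D(A_\ea))$ denotes the operator associated with $\ea$ via the first representation theorem, i.e., $A_\ea f=h$ iff $f\in V_0$ and $\ea(f,g)=-(h\mid g)_{X_2}$ for every $g\in V_0$. The crux in both directions is the same integration-by-parts identity, packaged via the incidence matrices.

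First I would take $f\in D(A)$ and $g\in V_0$ and compute $-(Af\mid g)_{X_2}=-\sum_{j=1}^m\int_0^1 (c_jf_j')'(x)\overline{g_j(x)}\,dx$. Edge-wise integration by parts produces the volume term $\sum_j\int_0^1 c_jf_j'\overline{g_j'}\,dx$ plus boundary contributions $\sum_j \bigl[c_jf_j'\overline{g_j}\bigr]_0^1$. Using the continuity of $g$ encoded in $g_j(0)=\sum_i \phi^+_{ij}d^g_i$ and $g_j(1)=\sum_i \phi^-_{ij}d^g_i$, these boundary contributions regroup by vertex into $\sum_{i=1}^{n-1}\overline{d^g_i}\bigl(\Phi^-_w f'(1)-\Phi^+_w f'(0)\bigr)_i$; the $i=n$ row drops out because $d^g_n=0$ for $g\in V_0$, which is precisely why the Dirichlet vertex is incorporated by forcing the last component of $d^g$ to vanish. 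Invoking the non-local Kirchhoff condition $\Phi^-_wf'(1)-\Phi^+_wf'(0)=\mathbb{B}d^f$ valid on $D(A)$, this becomes $\sum_{i,h=1}^{n-1}b_{ih}d^f_h\overline{d^g_i}$, so
$$-(Af\mid g)_{X_2}=\sum_{j=1}^m\int_0^1 c_jf_j'\overline{g_j'}\,dx-\sum_{i,h=1}^{n-1}b_{ih}d^f_h\overline{d^g_i}=\ea(f,g).$$
Hence $f\in D(A_\ea)$ with $A_\ea f=Af$.

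For the converse I would start with $f\in D(A_\ea)\subset V_0$, so each $f_j\in H^1(0,1)$ and continuity at the interior nodes is automatic. Testing against $g\in V_0$ of the form $(0,\ldots,0,\varphi,0,\ldots,0)$ with $\varphi\in C^\infty_c(0,1)$ in the $j$-th slot (such $g$ automatically has $d^g=0$, hence $\ea(f,g)$ reduces to the leading term) forces $c_jf_j'$ to admit a weak derivative in $L^2(0,1)$ equal to $-(A_\ea f)_j$; since $c_j\in C^1[0,1]$ is strictly positive, this upgrades $f_j$ to $H^2(0,1)$. Once $f\in(H^2(0,1))^m$ is available, the integration by parts of the previous paragraph is legitimate for \emph{every} $g\in V_0$, and comparing $\ea(f,g)=-(A_\ea f\mid g)_{X_2}$ with the result of that computation leaves the equation
$$\sum_{i=1}^{n-1}\overline{d^g_i}\bigl(\Phi^-_w f'(1)-\Phi^+_w f'(0)\bigr)_i=\sum_{i,h=1}^{n-1}b_{ih}d^f_h\overline{d^g_i}\qquad\text{for all }g\in V_0.$$
By constructing piecewise linear functions $g\in V_0$ realising any prescribed $d^g\in\mathbb{C}^{n-1}\times\{0\}$ at the ramification nodes and vanishing at $\mv_n$, the vector $(\overline{d^g_1},\ldots,\overline{d^g_{n-1}})$ sweeps out all of $\mathbb{C}^{n-1}$, so identifying coefficients yields the full non-local Kirchhoff condition; therefore $f\in D(A)$ and the two operators coincide.

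The main obstacle is the accurate bookkeeping of the boundary contributions: one has to follow which endpoint of which edge is glued to which vertex and verify that the raw sum $\sum_j [c_jf_j'\overline{g_j}]_0^1$ collapses cleanly into $\sum_{i=1}^{n-1}\overline{d^g_i}(\Phi^-_wf'(1)-\Phi^+_wf'(0))_i$. The weighted incidence matrices $\Phi^\pm_w$ were introduced precisely to absorb the coefficients $c_j(\mv_i)$ into this reshuffle, so once the combinatorial identities for $g_j(0)$ and $g_j(1)$ are written down the verification is essentially linear algebra; everything else (interior regularity, recovery of the boundary condition by varying $d^g$) is a standard application of the first representation theorem.
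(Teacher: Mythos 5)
Your proof is correct and follows essentially the same route as the paper's: both inclusions rest on the same edge-wise integration by parts, with the boundary terms regrouped vertex-by-vertex via the weighted incidence matrices, interior $H^2$-regularity recovered from compactly supported test functions in a single component, and the non-local Kirchhoff condition read off from the arbitrariness of the nodal values $d^g$. The only cosmetic difference is that you spell out the construction of test functions realising an arbitrary $d^g\in{\mathbb C}^{n-1}\times\{0\}$, which the paper leaves implicit.
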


\begin{proof}
Denote by $\left(C,D(C)\right)$ the operator associated with $\ea$, which by definition is given by
$$\begin{array}{rcl}
D(C)&:=&\left\{f\in V_0:\exists g\in X_2 \hbox{ s.t. } \ea(f,h)=(g\mid u)_{X_2}\; \forall u\in V_0\right\},\\
Cf&:=&-g.
\end{array}$$
Let us first show that $A\subset C$. Take $f\in D(A)$. Then for all $u\in V_0$
\begin{eqnarray*}
\ea(f,u)&=&\sum_{j=1}^m \int_0^1 c_j(x) f'_j(x)\overline{u'_j(x)}dx -\sum_{i,h=1}^{n-1} b_{ih} d^f_h\overline{d^u_i}\\
&=& \sum_{j=1}^m\left[c_jf'_j\overline{u_j}\right]_0^1 - \sum_{j=1}^m \int_0^1 (c_j f_j')'(x)\overline{u_j(x)}dx  -\sum_{i,h=1}^{n-1} b_{ih} d^f_h\overline{d^u_i}.
\end{eqnarray*}
Using the incidence matrix $\Phi=\Phi^+ - \Phi^- $ and recalling that $d^u_n=0$ as $u\in V_0$, we can write
$$\sum_{j=1}^m\left[c_jf'_j\overline{u_j}\right]_0^1 = \sum_{j=1}^m\sum_{i,h=1}^{n-1} c_j(\mv_i)(\phi_{ij}^- -\phi_{ij}^+)f'_j(\mv _i)\overline{d^u_i}= \sum_{i,h=1}^{n-1} \overline{d^u_i}\sum_{j=1}^m (\omega_{ij}^- -\omega_{ij}^+)f'_j(\mv _i).$$
Using the generalized Kirchhoff condition $\Phi^-_w f'(1)-\Phi^+_w f'(0)={\mathbb B}d^f$, which holds for all functions $f\in D(A)$, we obtain that
\begin{eqnarray*}
\ea(f,u)&=& \sum_{i=1}^{n-1} \overline{d^u_i} \underbrace{\sum_{j=1}^m (\omega_{ij}^- -\omega_{ij}^+)f'_j(\mv _i)}_{=\sum_{h=1}^{n-1} b_{ih} d^f_h} - \sum_{j=1}^m \int_0^1 (c_jf_j')'(x)\overline{u_j(x)}dx\\
&&\qquad-\sum_{ i,h=1}^{n-1} b_{ih} d^f_h\overline{d^u_i}\\
&=&- \sum_{j=1}^m \int_0^1 (c_jf_j')'(x)\overline{u_j(x)}dx=-(Af\mid u)_{X_2},
\end{eqnarray*}
which makes sense because $Af\in X_2$. The proof of the inclusion $A\subset C$ is completed.

To check the converse inclusion take $f\in D(C)$. By definition, there exists $g\in X_2$ such that
\begin{equation}\label{integr}
\ea(f,u)=(g\mid u)_{X_2}= \sum_{j=1}^m \int_0^1 g_j(x)\overline{u_j(x)}dx\qquad \hbox{for all } u\in V_0,
\end{equation}
hence in particular for all functions of the form
\begin{equation}\label{formfun}
\begin{pmatrix}
0\\
\vdots\\
u_j\\
\vdots\\
0
\end{pmatrix}\leftarrow j^{\rm th}\hbox{ row},\;\; u_j\in H^1_0(0,1).
\end{equation}
From this follows that~\eqref{integr} in fact implies
$$\int_0^1 c_j(x)f'_j(x)\overline{u'_j(x)}dx = \int_0^1 g_j(x)\overline{u_j(x)}dx\hbox{\; for all } j=1,\ldots,m,\;\; u_j\in H^1_0(0,1).$$
By definition of weak derivative this means that $c_j\cdot f_j'\in H^1(0,1)$ for all $j=1,\ldots,m$. Since $0<c_j\in C^1[0,1]$, there follows that $f_j'\in H^1(0,1)$ for all $j=1,\ldots,m$. We conclude that $f\in \left(H^2(0,1)\right)^m$. Moreover, integrating by parts as in the proof of the first inclusion we see that if~\eqref{integr} holds for all $u\in V_0$, then there also holds
$$\sum_{i,h=1}^{n-1} \overline{d^u_i} \sum_{j=1}^m (\omega_{ij}^- -\omega_{ij}^+)f'_j(\mv _i)=
\sum_{i,h=1}^{n-1} b_{ih} d^f_h\overline{d^u_i}.$$
Since $u\in V_0$ is arbitrary, this means that
$$\sum_{j=1}^m (\omega_{ij}^- -\omega_{ij}^+)f'_j(\mv _i)=\sum_{h=1}^{n-1} b_{ih} d^f_h\qquad \hbox{for all }i=1,\ldots,n-1,$$
that is, $\Phi_w^- f'(1)-\Phi_w^+ f'(0)={\mathbb B}d^f$. Therefore $f\in D(A)$ and
\begin{equation*}
-\sum_{j=1}^m \int_0^1 (c_jf'_j)'(x)\overline{u_j(x)}dx =\sum_{j=1}^m \int_0^1 g_j(x)\overline{h_j(x)}dx
\end{equation*}
holds for all $h\in V_0$. This implies that $Af=-g$, and the proof is complete.
\end{proof}

By the above results, the operator $A$ generates on $X_2$ a semigroup $(T_2(t))_{t\geq 0}$: thus, the abstract Cauchy problem ${\rm(ACP)}$ (and hence the concrete diffusion problem on the network) is well-posed in $X_2$.
We can characterize several features of $(T_2(t))_{t\geq 0}$ by those of $(e^{tB})_{t\geq 0}$, and hence of the scalar matrix $B$: we are going to show that $(T_2(t))_{t\geq 0}$ is real, positive, and $X_\infty$-contractive  (i.e., the unit ball of $X_\infty$ is invariant under $T_2(t)$ for all $t\geq 0$), respectively, if and only if the semigroup $(e^{tB})_{t\geq 0}$ generated by $B$ is  real, positive, and $\ell^\infty$-contractive (i.e., once endowed ${\mathbb C}^{n-1}$ with the equivalent $\infty$-norm, $e^{tB}$ leaves the unit ball of ${\mathbb C}^{n-1}$ invariant for all $t\geq 0$), respectively.

\begin{theo}\label{bd}
The semigroup $(T_2(t))_{t\geq 0}$ on $X_2$ associated with $\ea$ enjoys the following properties:
\begin{itemize}
\item it is real if and only if the matrix $B$ has real entries;
\item it is positive if and only if the matrix $B$ has real entries that are positive off-diagonal,
\end{itemize}
If moreover $B$ is dissipative, then $(T_2(t))_{t\geq 0}$  is $X^\infty$-contractive if and only if
\begin{equation}\label{xinfty1}
{\rm Re} b_{ii} + \sum_{h\not=i} \vert b_{ih}\vert \leq 0,\qquad i=1,\ldots,n-1.
\end{equation}
\end{theo}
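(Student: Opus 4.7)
My plan is to invoke the Beurling--Deny/Ouhabaz invariance criterion for closed convex subsets $C\subset X_2$ under the semigroup associated with $\ea$, cf.~\cite[Thm.~2.2]{Ou04}: $C$ is invariant under $(T_2(t))_{t\geq 0}$ if and only if the $X_2$-orthogonal projection $P_C$ onto $C$ satisfies $P_C V_0\subset V_0$ and $\mathrm{Re}\,\ea(P_C f,\,f-P_C f)\geq 0$ for all $f\in V_0$. Since $\ea$ is $X_2$-elliptic by Proposition~\ref{main} (and accretive whenever $B$ is dissipative, by the same proposition), this criterion is at my disposal; I would apply it to three different closed convex sets.

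For realness, take $C$ the closed real subspace of $X_2$, so $P_Cf=\mathrm{Re}\,f$; membership in $V_0$ is immediate since $\mathrm{Re}\,d^f\in\mathbb R^{n-1}\times\{0\}$. Writing $f=u+iv$ with real $u,v\in V_0$, a short computation using the sesquilinearity of $\ea$ and the fact that $u_j'v_j'$ is real gives
$$\mathrm{Re}\,\ea(u,iv)=-\mathrm{Im}\sum_{i,h=1}^{n-1}b_{ih}\,d^u_h\,d^v_i,$$
the integral contribution being purely imaginary. Because $v$ can be replaced by $-v$, invariance forces this imaginary part to vanish for all real $u,v\in V_0$, which by independence of the nodal components is equivalent to $b_{ih}\in\mathbb R$ for every $i,h$.

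A positive semigroup is in particular real, so for the second item I may already assume $B\in M_{n-1}(\mathbb R)$ and work on real functions. Take $P_Cf=f^+$ for real $f\in V_0$; then $f^+\in V_0$ because $(d^f)^+\in\mathbb R^{n-1}\times\{0\}$ and $f^+\in(H^1(0,1))^m$. Since $(f_j^+)'(f_j^-)'=0$ a.e.~on $(0,1)$, the form reduces to the purely nodal expression
$$\ea(f^+,f^-)=-\sum_{i,h=1}^{n-1}b_{ih}\,d^{f^+}_h\,d^{f^-}_i,$$
and the Ouhabaz inequality $\mathrm{Re}\,\ea(f^+,-f^-)\geq 0$ becomes $\sum_{i,h}b_{ih}\,d^{f^+}_h\,d^{f^-}_i\geq 0$. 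Since $d^{f^+}_i d^{f^-}_i=0$, only off-diagonal pairs contribute; prescribing $d^f_h>0$, $d^f_i<0$ (other nodal values zero) for an arbitrary pair $i\neq h$ forces $b_{ih}\geq 0$, and conversely that condition obviously makes the sum non-negative.

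For $X^\infty$-contractivity, $C$ is the unit ball of $X_\infty\subset X_2$, with componentwise projection $(P_Cf)_j=(1\wedge|f_j|)f_j/|f_j|$. The map $z\mapsto(1\wedge|z|)z/|z|$ is globally Lipschitz on $\mathbb C$, so $P_Cf\in(H^1(0,1))^m$, and the vertex condition passes via $d^{P_Cf}_i=(1\wedge|d^f_i|)d^f_i/|d^f_i|$; thus $P_CV_0\subset V_0$. Writing $f_j=\rho_je^{i\theta_j}$ with $\rho_j=|f_j|$, the polar-coordinate identity
$$\mathrm{Re}\sum_{j=1}^m\int_0^1 c_j(P_Cf_j)'\,\overline{(f-P_Cf)_j'}\,dx=\sum_{j=1}^m\int_{\{\rho_j>1\}}c_j(\rho_j-1)(\theta_j')^2\,dx\geq 0$$
shows that the integral piece of $\mathrm{Re}\,\ea(P_Cf,f-P_Cf)$ is always non-negative, so it is the nodal piece $-\mathrm{Re}\sum_{i,h}b_{ih}\,d^{P_Cf}_h\,\overline{d^{f-P_Cf}_i}$ that determines the sharp condition. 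Testing with functions of constant argument on each edge (so the integral vanishes), having a single $|d^f_k|=r>1$ with $d^f_k=re^{i\theta_k}$ and all other $d^f_h=e^{i\phi_h}$ of unit modulus, the nodal contribution reduces to $-(r-1)\mathrm{Re}\bigl[b_{kk}+e^{-i\theta_k}\sum_{h\neq k}b_{kh}e^{i\phi_h}\bigr]$, and taking the supremum of $\mathrm{Re}[b_{kk}+\sum_{h\neq k}b_{kh}e^{i(\phi_h-\theta_k)}]$ over free phases yields precisely $\mathrm{Re}\,b_{kk}+\sum_{h\neq k}|b_{kh}|\leq 0$, i.e.~\eqref{xinfty1}. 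Conversely, \eqref{xinfty1} provides a node-by-node upper bound that, combined with the non-negative integral piece, yields the Ouhabaz inequality for arbitrary $f\in V_0$. The main technical obstacle lies in this last part: justifying the polar decomposition rigorously in $H^1(0,1)$ where $\rho_j$ may vanish, and handling configurations in which several $|d^f_h|$ simultaneously exceed $1$, so that the nodal inequality still collapses to a sum of the one-node estimates governed by \eqref{xinfty1}.
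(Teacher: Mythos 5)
Your argument is correct in substance and rests on the same machinery as the paper, namely Ouhabaz's invariance criteria for the form $\ea$; for the first two bullets (realness and positivity) your computations coincide with the paper's, which cites the specialized criteria \cite[Prop.~2.5, Thm.~2.6]{Ou04} rather than the general convex-set criterion, a purely cosmetic difference. The genuine divergence is in the $X_\infty$-contractivity part. The paper computes $\ea\bigl((1\wedge|f|)\,{\rm sign}f,(|f|-1)^+{\rm sign}f\bigr)$, observes that only the nodal term survives, identifies it with the form $b$ of the matrix $B$ on ${\mathbb C}^{n-1}$, and concludes that $X_\infty$-contractivity of $(T_2(t))_{t\geq 0}$ is equivalent to $\ell^\infty$-contractivity of $(e^{tB})_{t\geq 0}$; the characterization \eqref{xinfty1} is then delegated to Lemma~\ref{wolfg}, whose proof goes through modulus semigroups and duality. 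You instead derive \eqref{xinfty1} directly by testing and estimating, which amounts to reproving the relevant direction of Lemma~\ref{wolfg} inline: more self-contained and more elementary, at the cost of the technical care you yourself flag at the end. Two remarks on that. First, your polar-coordinate identity for the real part of the integral term, $\sum_j\int_{\{\rho_j>1\}}c_j(\rho_j-1)(\theta_j')^2\,dx\geq 0$, is actually more careful than the paper's treatment, which uses derivative formulas for $(1\wedge|k|)\,{\rm sign}\,k$ that are literally valid only for real-valued $k$ and silently drops the angular contribution. Second, your necessity argument has a small but real gap: a function with \emph{constant} argument on an edge cannot join two endpoint values $e^{i\phi_h}$ and $e^{i\phi_i}$ with $\phi_h\not\equiv\phi_i\ (\mathrm{mod}\ \pi)$, so the test functions as described need not exist. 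This is easily patched -- let the argument vary only on the portion of each edge where $\rho_j\leq 1$, so that the integral term still vanishes while the nodal values remain free -- but it should be said. The sufficiency step you defer is in fact routine: for each $i$ with $|d^f_i|>1$ one bounds $\mathrm{Re}\bigl[\sum_h b_{ih}(1\wedge|d^f_h|)\,{\rm sign}\,d^f_h\,\overline{{\rm sign}\,d^f_i}\bigr]\leq \mathrm{Re}\,b_{ii}+\sum_{h\neq i}|b_{ih}|\leq 0$, and the node-by-node estimates sum as you claim.
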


\begin{proof}
By Corollary~\ref{sa+diss} $\ea$ is densely defined, continuous, and elliptic. Thus, by~\cite[Prop. 2.5 and Thm. 2.6]{Ou04} and Proposition~\ref{main}, and taking into account the rescaling argument discussed in~\cite[\S~9.2]{Ar06}, the semigroup associated to $\ea$ is real and positive, respectively, if and only if
\begin{itemize}
\item ${f}\in V_0 \Rightarrow {\rm Re}f\in V_0 \hbox{ and } {\ea}({\rm Re}{f},{\rm Im}{f})\in\mathbb{R}$, and
\item ${f}\in V_0\Rightarrow ({\rm Re} {f})^+ \in V_0,\; a({\rm Re}f,{\rm Im}f)\in{\mathbb R}$, and $\ea(({\rm Re}f)^+,({\rm Re}f)^-)\leq 0$,
\end{itemize}
respectively.

Furthermore, if $B$ is dissipative, then by Proposition~\ref{main} $\ea$ is accretive and by~\cite[Thm.~2.15]{Ou04} the semigroup $(T_2(t))_{t\geq 0}$ is $X_\infty$-contractive if and only if
$$f\in V_0 \Rightarrow (1\wedge \vert {f}\vert){\rm sign} f \in V_0 \hbox{ and } {\rm Re}\ea((1\wedge \vert f\vert){\rm sign} f,(\vert f\vert-1)^+ {\rm sign} f)\geq 0,$$
where ${\rm sign}$ denotes the generalized (complex-valued)  function defined by
$${\rm sign} f(x):=\left\{
\begin{array}{ll}
\frac{f(x)}{\vert f(x)\vert},\quad &\hbox{if } f(x)\not=0,\\
0, &\hbox{if } f(x)=0.
\end{array}
\right.$$

If $k\in H^1(0,1)$, then it is clear that ${\rm Re}\; k\in H^1(0,1)$ as well as $({\rm Re}\; k)^+\in H^1(0,1)$. Furthermore, $({\rm Re}\; k)'={\rm Re}(k')$ and $(({\rm Re}\; k)^+)'={\rm Re}(k'){\mathbb 1}_{\{k\geq 0\}}$. Moreover the functions defined by
$$((1\wedge \vert k\vert){\rm sign} k)(x)=\left\{
\begin{array}{ll}
k(x),\quad &\hbox{if } \vert k(x)\vert \leq 1,\\
\frac{k(x)}{\vert k(x)\vert}, &\hbox{if } \vert k(x)\vert \geq 1,
\end{array}
\right.$$
as well as
$$((\vert k\vert-1)^+{\rm sign} k)(x)=\left\{
\begin{array}{ll}
0,\quad &\hbox{if } \vert k(x)\vert \leq 1,\\
k(x)-\frac{k(x)}{\vert k(x)\vert}, &\hbox{if } \vert k(x)\vert \geq 1,\end{array}
\right.$$
are in $H^1(0,1)$, with $((1\wedge \vert k\vert){\rm sign} k)'=k'{\mathbb 1}_{\{\vert k\vert\leq 1\}}$ and $((\vert k\vert-1)^{+}{\rm sign} k)'=k'{\mathbb 1}_{\{\vert k\vert\geq 1\}}$.

By definition, the subspace $V_0$ contains exactly those functions on the network that are continuous in the vertices and vanish in the vertex $\mv_{n}$. Then for every $f\in V_0$ we have ${\rm Re} ({f_j})=({\rm Re } f)_j$, $1\leq j\leq m$. It follows from the above arguments that ${\rm Re} f\in \left(H^1(0,1)\right)^m$, and one can see that the continuity of the values attained by $f$ in the vertices is preserved after taking the real part ${\rm Re} {f}$. All in all, ${\rm Re} f\in V_0$. Moreover, $\ea({\rm Re}{f},{\rm Im}{f})$ is the sum of $m$ integrals and $n$ numbers. Recall that the weights $c_1,\ldots c_m$ are real-valued, positive functions. Since $d^{{\rm Re} f}, d^{{\rm Im} f}\in{\mathbb R}^{n}$, it follows that ${\ea}({\rm Re}{f},{\rm Im}{f})\in\mathbb R$ if and only if
$$\sum_{i,h=1}^{n-1} b_{ih} d^{{\rm Re} f}_h{d^{{\rm Im} f}_i}\in\mathbb R.$$
This holds for all $f\in V_0$ if and only if $b_{ih}\in{\mathbb R}$.

Moreover, if $f\in V_0$, then $(({\rm Re} f)^+)_j = ({\rm Re} (f_j))^+$, $1\leq j\leq m$, and one sees as above that $({\rm Re} f)^+\in V_0$. In particular, for all $i=1,\ldots,n-1$ there holds
$$d_i^{({\rm Re}f)^+}=
\left\{
\begin{array}{ll}
0,\quad &\hbox{if } {\rm Re}d^f_i \leq 0,\\
{\rm Re}d^f_i,  &\hbox{if } {\rm Re}d^f_i  \geq 0,\end{array}
\right.
\quad\hbox{and}\quad
d_i^{({\rm Re}f)^-}=
\left\{
\begin{array}{ll}
-{\rm Re}d^f_i,  &\hbox{if } {\rm Re}d^f_i  \leq 0,\\
0,\quad &\hbox{if } {\rm Re}d^f_i \geq 0.\end{array}
\right.
$$

Accordingly,
 \begin{eqnarray*}
 \ea(({\rm Re} f)^+,({\rm Re} f)^-)&=&-\sum_{i,h=1}^{n-1} b_{ih} d^{({\rm Re}f)^+}_hd^{({\rm Re}f)^-}_i\\
 &=& -\sum_{\left\{\begin{array}{c}_{1\leq i,h\leq 2}\\_{i\not=h}\end{array}\right\}} b_{ih} ({\rm Re}d^f_h)^+({\rm Re}d^f_i)^-.
 \end{eqnarray*}
Thus, $ \ea(({\rm Re} f)^+,({\rm Re} f)^-)\leq 0$ if $B$ has positive off-diagonal entries. Conversely assume that $b_{i_0h_0}<0$ for some $i_0\not= h_0$. Then consider the function $f$ with boundary values $d^f_{i_0}=-1$, $d^f_{h_0}=1$, and $d^f_i=0$ for all other $i$. Then $ \ea(({\rm Re} f)^+,({\rm Re} f)^-)=-b_{i_0 h_0}>0$.

Finally, we discuss the property of $X_\infty$-contractivity for the semigroup associated to $\ea$. Thus, take $f\in V_0$. Then
$(1\wedge \vert f\vert){\rm sign}f\in \left(H^1(0,1)\right)^m$ and, again, the continuity of $f$ in the vertices imposes the same property to the function $(1\wedge \vert f\vert){\rm sign}f$, and in fact
$$d_i^{(1\wedge \vert f\vert){\rm sign}f}=(1\wedge \vert d^f_i\vert){\rm sign}d^f_i =\left\{
\begin{array}{ll}
d^f_i,\quad &\hbox{if } \vert d^f_i\vert \leq 1,\\
{\rm sign}d^f_i,  &\hbox{if } \vert d^f_i\vert  > 1,\end{array}
\right.$$
as well as
$$d_i^{(\vert f\vert-1)^+{\rm sign}f}=(\vert d^f_i\vert-1)^+{\rm sign}d^f_i=\left\{
\begin{array}{ll}
0,\quad &\hbox{if } \vert d^f_i\vert \leq 1,\\
d^f_i  - {\rm sign} d^f_i,  &\hbox{if } \vert d^f_i\vert  > 1,\end{array}
\right.$$
for all $i=1,\ldots,n-1$.
%
Now a direct computation yields
\begin{eqnarray*}
\ea((1\wedge \vert f\vert){\rm sign} f,(\vert f\vert-1)^+ {\rm sign} f)
&=&
 -\sum_{i,h=1}^{n-1} b_{ih} (1\wedge \vert d^f_h\vert){\rm sign}d^f_h \overline{(\vert d^f_i\vert-1)^+{\rm sign}d^f_i}\\
&=& b((1\wedge \vert d^f\vert){\rm sign}d^f, {(\vert d^f\vert-1)^+{\rm sign}d^f}),
\end{eqnarray*}
where $b$ denotes the sesquilinear, accretive form on ${\mathbb C}^{n-1}$ associated to the matrix $B$. Since the nodal values $d^f_1,\ldots,d^f_{n-1}$ are arbitrary,  by~\cite[Thm.~2.15]{Ou04} (which of course also applies to the accretive form $b$) one sees that the property of $X_\infty$-contractivity for $(T_2(t))_{t\geq0}$ is equivalent to that of $\ell^\infty$-contractivity for the semigroup $(e^{tB})_{t\geq 0}$ generated by $B$ on ${\mathbb C}^{n-1}$. Now the claim follows by Lemma~\ref{wolfg}.
\end{proof}

We recall that if $(T(t))_{t\geq 0}$ and $(S(t))_{t\geq 0}$ are semigroups on a Banach lattice $X$, then $(T(t))_{t\geq 0}$ is said to \emph{dominate $(S(t))_{t\geq 0}$ in the sense of positive semigroups} if
$$\vert S(t)f\vert \leq T(t)\vert f\vert \qquad\hbox{for all\;}f\in X,\; t\geq 0.$$

\begin{prop}\label{domin}
Let $B,\tilde{B}$ be $(n-1)\times (n-1)$ dissipative matrices and denote by $(e^{tB})_{t\geq 0}$ and $(e^{t\tilde{B}})_{t\geq 0}$ the semigroups they generate on ${\mathbb C}^{n-1}$. Assume that $B$ is real and positive off-diagonal, so that $(e^{tB})_{t\geq 0}$  is positive. Denote by $\ea_{B}$, $\ea_{\tilde {B}}$ the coercive form $\ea$ with coefficients given by $B$ and $\tilde{B}$, respectively, and by $(T_{B}(t))_{t\geq 0}$, $(T_{\tilde{B}}(t))_{t\geq 0}$ the associated semigroups on $X_2$. Then
$(T_{B}(t))_{t\geq 0}$ dominates $(T_{\tilde{B}}(t))_{t\geq 0}$ in the sense of positive semigroups if and only if $(e^{tB})_{t\geq 0}$ dominates $(e^{t\tilde{B}})_{t\geq 0}$ in the sense of positive semigroups.
\end{prop}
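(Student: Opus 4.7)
My strategy is to invoke the Ouhabaz form-theoretic characterization of semigroup domination (see \cite[Thm.~2.24]{Ou04}) both on $X_2$ and on ${\mathbb C}^{n-1}$, and then to check that the two resulting criteria coincide. Since $\ea_B$ and $\ea_{\tilde B}$ are densely defined, continuous, accretive sesquilinear forms on $X_2$ with common form domain $V_0$, and since $(T_B(t))_{t\geq 0}$ is positive by Theorem~\ref{bd}, the cited criterion reduces the domination $|T_{\tilde B}(t)f|\leq T_B(t)|f|$ to a single form inequality
$$\ea_B(|f|,g)\leq {\rm Re}\,\ea_{\tilde B}\bigl(f,\,g\cdot{\rm sign}\,f\bigr)\qquad\hbox{for all } f\in V_0,\ 0\leq g\in V_0,$$
with the usual regularization of ${\rm sign}\,f$ on the zero set of $f$. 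An analogous criterion on ${\mathbb C}^{n-1}$, applied to the accretive sesquilinear forms $b_B(\xi,\eta):=-\sum b_{ih}\xi_h\overline{\eta_i}$ and $b_{\tilde B}$, describes domination of $(e^{tB})_{t\geq 0}$ over $(e^{t\tilde B})_{t\geq 0}$.

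The crucial structural remark is the splitting $\ea_B=\ea_0+\ea_1^B$, in which the leading diffusion form $\ea_0$ is independent of $B$ while $\ea_1^B(f,g)=-\sum_{i,h}b_{ih}\,d^f_h\overline{d^g_i}=b_B(d^f,d^g)$ depends only on the nodal values. A standard Kato-type computation, using $(|f_j|)'={\rm Re}(f_j'\,\overline{{\rm sign}\,f_j})$ a.e.\ together with edgewise integration by parts, yields
$$\ea_0(|f|,g)\leq {\rm Re}\,\ea_0\bigl(f,\,g\cdot{\rm sign}\,f\bigr)\qquad\hbox{for all }f\in V_0,\ 0\leq g\in V_0,$$
with equality whenever $f$ is real-valued on each edge (the ``slack'' term $\sum_j\int_0^1 c_j g_j\bigl(|f_j'|^2-((|f_j|)')^2\bigr)/|f_j|$ then vanishes). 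Consequently the $X_2$-criterion is equivalent to the purely nodal inequality
$$b_B(|d^f|,d^g)\leq {\rm Re}\,b_{\tilde B}\bigl(d^f,\,d^g\cdot{\rm sign}\,d^f\bigr).$$
Indeed, ``$\Leftarrow$'' is immediate by adding the always-valid $\ea_0$-inequality, while ``$\Rightarrow$'' follows by specializing to real-valued test functions, where the $\ea_0$-contributions cancel exactly. Since every pair $(\xi,\eta)\in{\mathbb C}^{n-1}\times{\mathbb R}^{n-1}_{\geq 0}$ can be realized as nodal values $(d^f,d^g)$ of suitable $f,g\in V_0$ (via piecewise-linear extensions along a spanning tree of $G$, using connectedness), this nodal inequality coincides precisely with the Ouhabaz criterion on ${\mathbb C}^{n-1}$, yielding the claimed equivalence.

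The principal obstacles I anticipate are two. First, the function $g\cdot{\rm sign}\,f$ need not lie in $V_0$ when $f$ has zeros; the standard remedy is to mollify ${\rm sign}$ by a smooth $\epsilon$-truncation and pass to the limit as $\epsilon\to 0$, as in \cite[Ch.~2]{Ou04}. Second, the ``$\Rightarrow$'' passage from the form inequality on $X_2$ to the nodal inequality in full generality (i.e., for complex $d^f$) requires some care, since for complex $f$ the $\ea_0$-inequality becomes strict and the $X_2$-inequality alone does not directly encode the matrix inequality; this is handled by first extracting the real-nodal-value case from real test functions (where the $\ea_0$-slack vanishes) and then recovering the complex case via a homogeneity/continuity argument in $d^f$. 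These technical points aside, the equivalence is conceptually driven by the simple remark that $B$ and $\tilde B$ enter the forms $\ea_B,\ea_{\tilde B}$ only through the nodal-value bilinear forms $b_B,b_{\tilde B}$.
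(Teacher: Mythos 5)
Your proposal is correct and follows essentially the same route as the paper: both apply Ouhabaz's form-theoretic domination criterion on $X_2$ and on ${\mathbb C}^{n-1}$, observe that $B$ and $\tilde B$ enter only through the nodal-value forms (the leading diffusion form being common to both and satisfying the domination inequality automatically), and conclude by the arbitrariness of nodal values of functions in $V_0$. The paper works with the $f\overline{g}\geq 0$ formulation of the criterion (\cite[Thm.~2.21]{Ou04}) rather than your $g\cdot{\rm sign}\,f$ variant, which sidesteps most of the technical caveats you raise, but this is a cosmetic difference.
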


\begin{proof}
Observe that $V_0$, the domain of both $\ea_B$ and $\ea_{\tilde{B}}$ is an ideal of itself by~\cite[Prop.~2.20]{Ou04}. A direct computation shows that
$$\ea_{B}(\vert f\vert ,\vert g\vert) \leq {\rm Re}\ea_{{\tilde B}}(f,g)\qquad\hbox{for all } f,g\in V_0\hbox{ s.t. } f\overline{g}\geq 0$$
if and only if
$$\sum_{i,h=1}^{n-1} b_{ih} \vert d^f_h\vert \vert{d^g_i}\vert \geq {\rm Re}\sum_{i,h=1}^{n-1} \tilde{b}_{ih} d^f_h\overline{d^g_i}\qquad\hbox{for all } f,g\in V_0\hbox{ s.t. } f\overline{g}\geq 0.$$
Due to the arbitrarity of the nodal values of $f,g\in V_0$, such a condition is satisfied if and only if
$$\sum_{i,h=1}^{n-1} b_{ih} \vert x_h\vert \vert{y_i}\vert \geq {\rm Re}\sum_{i,h=1}^{n-1} \tilde{b}_{ih} x_h\overline{y_i}\qquad\hbox{for all } x,y\in {\mathbb C}^{n-1}\hbox{ s.t. } x\overline{y}\geq 0.$$
Thus, applying~\cite[Thm. 2.21]{Ou04} to the forms $\ea_B,\ea_{\tilde{B}}$ as well as to the forms associated to the matrices $B,\tilde{B}$ yields the claim.
\end{proof}

It has been shown in~\cite[\S~5]{KMS06} that the positive semigroup governing a diffusion problem on a network \emph{without Dirichlet conditions on any node} is irreducible if and only if $G$ is connected. This is no more true in the setting considered in this paper, unless we replace the notion of connectedness by a stronger one.




\begin{prop}
Let $B$ be dissipative with real, positive off-diagonal entries.
Then the following hold.
\begin{enumerate}
\item If the positive semigroup $(T_2(t))_{t\geq 0}$ is not irreducible, then $G$ is the union of two non-trivial subgraphs $G_1,G_2\subset G$ containing vertices $v_1,\ldots,v_{n_0}$ and $v_{n_0+1},\ldots, v_{n-1}$, respectively, and such that  $G_1\cap G_2\subset \{\mv_{n}\}$.
\item The converse also holds, if further $B$ is assumed to be block-diagonal, i.e., of the form
$$B=\begin{pmatrix}
b_{11} & \cdots & b_{1n_0} & \\
\vdots & \ddots & \vdots& &0 \\
b_{n_01} & \cdots & b_{n_0 n_0}\\
& & & b_{n_0+1\; n_0+1} & \cdots & b_{n_0+1\; n-1}\\
&0 && \vdots & \ddots & \vdots\\
&&&b_{n\;n_0+1} & \cdots & b_{n-1\; n-1}\end{pmatrix}$$
\item Let $B$ strictly positive off-diagonal. Then $(T(t))_{t\geq 0}$ is irreducible.
\end{enumerate}
\end{prop}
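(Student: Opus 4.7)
The plan rests on the standard form-theoretic criterion for irreducibility of a positive semigroup (see e.g.~\cite[\S~2.5]{Ou04}): $(T_2(t))_{t\geq 0}$ fails to be irreducible if and only if there is a measurable set $M$ with $0<|M|<m$ such that (a) $\mathbb{1}_M V_0\subset V_0$, and (b) the form $\ea$ \emph{splits} along $M$, i.e.\ $\ea(\mathbb{1}_M f,\mathbb{1}_{M^c}g)=0=\ea(\mathbb{1}_{M^c}f,\mathbb{1}_M g)$ for all $f,g\in V_0$. All three claims are then obtained by interpreting (a) and (b) combinatorially.

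For~(1), assume non-irreducibility and fix such an $M$. Since $V_0\subset(H^1(0,1))^m$, multiplication by $\mathbb{1}_M$ can preserve $H^1$-regularity on each edge only if the restriction of $M$ to that edge is, up to null sets, either empty or the whole edge; hence $M$ is a union of entire edges. Condition~(a) then demands that $\mathbb{1}_M f$ remain continuous at every non-Dirichlet vertex $\mv_i$, which forces the edges of $\Gamma(\mv_i)$ (for $i<n$) to lie either all in $M$ or all in $M^c$. Setting $G_1$ and $G_2$ to be the subgraphs induced by the edges in $M$ and $M^c$ respectively, we see that they can share vertices only at $\mv_n$, yielding (after relabeling) the desired partition of the non-Dirichlet vertices.

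For~(2), let $M$ be the union of the edges of $G_1$. The condition $G_1\cap G_2\subset\{\mv_n\}$, combined with $d^f_n=0$ for $f\in V_0$, ensures $\mathbb{1}_M f\in V_0$. The diffusion part of $\ea(\mathbb{1}_M f,\mathbb{1}_{M^c}g)$ vanishes since $(\mathbb{1}_M f)'$ and $(\mathbb{1}_{M^c}g)'$ have disjoint supports on each edge, while the nodal part reduces to $-\sum_{h\leq n_0<i\leq n-1} b_{ih}d^f_h\overline{d^g_i}$, which vanishes by block-diagonality of $B$; symmetrically for the other cross-term. Hence $\ea$ splits along $M$ and the semigroup is reducible.

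For~(3), suppose $(T_2(t))_{t\geq 0}$ were not irreducible. By~(1), $G=G_1\cup G_2$ with $G_1\cap G_2\subset\{\mv_n\}$; since $G$ is connected and each subgraph carries at least one edge, each $G_k$ contains a vertex other than $\mv_n$, so both index ranges $\{1,\dots,n_0\}$ and $\{n_0+1,\dots,n-1\}$ are non-empty. Condition~(b) then reads $\sum_{h\leq n_0<i\leq n-1} b_{ih}d^f_h\overline{d^g_i}=0$ for all $f,g\in V_0$; since every $d\in\mathbb{C}^{n-1}\times\{0\}$ arises as $d^f$ for some $f\in V_0$ (e.g.\ by piecewise linear interpolation along the edges), this forces $b_{ih}=0$ for every such pair $(i,h)$, contradicting strict positivity off the diagonal. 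The main obstacle is precisely the translation of Ouhabaz's analytic criterion (a)--(b) into the purely combinatorial picture: once one argues that characteristic functions of proper subintervals of an edge destroy $H^1$-regularity and that vertex continuity forces the edges at each non-Dirichlet node to be grouped together, the three parts reduce to direct verifications.
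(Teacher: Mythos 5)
Your proof is correct and follows essentially the same route as the paper: both invoke Ouhabaz's form-domain criterion for irreducibility and then translate it combinatorially, showing that a reducing set must consist of whole edges grouped together at every non-Dirichlet vertex (the paper phrases this via paths avoiding $\mv_n$, you via the induced subgraphs, which is the same observation). The only, harmless, deviation is that you state the criterion with both cross-terms of the form vanishing, whereas reducibility in general only yields the one-sided condition ${\rm Re}\,\ea(f\mathbb{1}_{\tilde G},f\mathbb{1}_{G\setminus \tilde G})\geq 0$ used in the paper; your arguments for (1)--(3) go through verbatim with this weaker condition, since (1) uses only the invariance of $V_0$ under multiplication by $\mathbb{1}_{\tilde G}$, (2) establishes the stronger splitting anyway, and in (3) the inequality $-b_{i_0h_0}\geq 0$ already contradicts strict positivity.
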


\begin{proof}
Recall that by~\cite[Thm. 2.10]{Ou04} the positive semigroup $(T_2(t))_{t\geq 0}$ is irreducible if and only whenever $\tilde{G}$ is an open subset of $G$ such that
\begin{equation}\label{irrcrit}
f\mathbb{1}_{\tilde{G}}\in V_0\qquad\hbox{and}
\qquad\hbox{Re}\ea(f\mathbb{1}_{\tilde{G}},f\mathbb{1}_{G\setminus \tilde{G}})\geq 0\qquad \hbox{for all } f\in V_0,
\end{equation}
then either $\tilde{G}$ or its complement has zero measure.

In particular, let $(T_2(t))_{t\geq 0}$ be irreducible and let
$f\in V_0$ be a function that agrees with the constant 1 on all
edges of $\tilde G$ that are \emph{not} incident to $\mv_{n}$, and
is smooth elsewhere. Then $f\mathbb{1}_{\tilde G}$ is of class
$H^1$ on each edge, continuous on each node of $G_1$ and vanishes
in $\mv_{n}$. Since $f\mathbb{1}_{\tilde G}$ also vanishes on all
nodes of $G_2$, it follows that $f\mathbb{1}_{\tilde G}\in V_0$.
We deduce that if $\tilde{G}$ contains an interior point $x$ of
some edge $e_j$, $j\not\in\Gamma(\mv_{n})$, then the whole $e_j$
belongs to $\tilde{G}$ (otherwise $f\mathbb{1}_{\tilde G}$ would
be discontinuous at $x$, and in particular not of class $H^1$.)

1) Let $(T(t))_{t\geq 0}$ be not irreducible. We want to show that $G$ is the union of nontrivial subgraphs whose intersection is $\{\mv_{n}\}$ or, in other words, that for all pair of points $x,y\in G$ every path $p(x,y)$ connecting them contains $\mv_{n}$.
Since the semigroup is not irreducible, there exists $\tilde{G}\subset G$ such that $\mu(\tilde{G})\not=0\not=\mu(G\setminus \tilde{G})$ and such that~\eqref{irrcrit} holds.
As remarked before, $\tilde{G}$ and $G\setminus \tilde{G}$ can thus only contain whole edges: let thus $e_{j_0},e_{\ell_0}$ be edges contained in $\tilde{G}$ and $G\setminus \tilde{G}$, respectively.
Assume now that then there exists a path $p\subset G$ that connects some interior point of $e_{j_0}$ to some interior point of $e_{\ell_0}$ and such that $\mv_{n}\notin p$. However, by a continuity argument the function $f\mathbb{1}_{\tilde G}$ would be constant 1 along the path $p$, a contradiction. Here we have denoted by $f$ a smooth function in $V_0$ that agrees with the constant 1 everywhere beside on the edged incident to $\mv_{n}$.

$2)$ Take $\tilde{G}=G_1$ and let $f\in V_0$. Then $f\mathbb{1}_{G_1}$ is a function that equals $f$ on the edges of $G_1$: by definition, $f\mathbb{1}_{G_1}$ is continuous in the vertices of $G_1$ and it vanishes in $\mv_{n}$. Furthermore, $f\mathbb{1}_{G_1}$ vanishes on the edges of $G_2$ and all the vertices adjacent to them, thus in particular it is continuous in the vertices of $G_2$, too. Summing up, $f\mathbb{1}_{G_1}\in V_0$ and moreover
$${\rm Re}\ea(f\mathbb{1}_{G_1},f\mathbb{1}_{G\setminus G_1})=-{\rm Re}\sum_{i=1}^{n_0} \sum_{h=n_0+1}^{n-1} b_{ih}d^f_h d^f_i = 0.$$
However, $G_1$ is not a set of measure zero.

$3)$ Let $\tilde{G}\subset G$ with $\mu(\tilde{G})\not=0$, and assume that~\eqref{irrcrit} holds. We are going to show that $\mu(G\setminus \tilde{G})=0$. In fact, let $\mu(G\setminus \tilde{G})\not=0$, i.e., let $G\setminus\tilde{G}$ contain (at least) one whole interval. We can thus assume that there exist nodes $v_{i_0}\in \tilde{G}$ and $v_{h_0}\in G\setminus \tilde{G}$. Let us now consider some function $f\in V_0$ such that $d^f_{i_0}=d^f_{h_0}=1$ and $d^f_{i}=0$ for all $i\not= i_0$, $i\not= h_0$. Then there holds
$$\ea(f\mathbb{1}_{\tilde{G}},f\mathbb{1}_{G\setminus \tilde{G}})= -b_{i_0 h_0} d^{f\mathbb{1}_{\tilde{G}}}_{h_0} d^{f\mathbb{1}_{G\setminus \tilde{G}}}_{i_0}=-b_{i_0 h_0}< 0,$$
a contradiction to~\eqref{irrcrit}.
\end{proof}



\section{Extrapolating semigroups, ultracontractivity, and Gaussian estimates}

Let the matrix $B$ and hence its adjoint $B^*$ be dissipative. One can easily see that the adjoint of the form $\ea$ is given by the form $\ea^*$ defined by
\begin{equation*}
\ea^*(f,g):=\sum_{j=1}^m\int_0^1 c_j(x) f'_j(x) \overline{g'_j(x)} dx -\sum_{i,h=1}^{n-1} \overline{b_{hi}} d^f_h\overline{d^g_i},\qquad f,g\in V_0.
\end{equation*}
By Theorem~\ref{bd} the semigroup associated to $\ea^*$ is $X_\infty$-contractive if and only if
\begin{equation}\label{xinfty2}
{\rm Re} b_{ii} + \sum_{h\not=i} \vert b_{hi}\vert \leq 0,\qquad i=1,\ldots,n-1.
\end{equation}
By duality, this is the case if and only if $(T_2(t))_{t\geq 0}$ is $X_1$-contractive. By interpolation we thus conclude that $(T_2(t))_{t\geq 0}$ is $X_p$-contractive for all $p\in[1,\infty]$ if and only if both~\eqref{xinfty1} and~\eqref{xinfty2} are satisfied. We thus obtain the following.

\begin{theo}\label{comp}
Let the matrix $B$ be dissipative, and let it satisfy the assumptions \eqref{xinfty1}--\eqref{xinfty2}. Then $(T_2(t))_{t\geq 0}$  extrapolates to a family of contractive semigroups $(T_p(t))_{t\geq 0}$ on $X_p$, $p\in[1,\infty]$, which are strongly continuous for $p\in[1,\infty)$ and analytic for $p\in(1,\infty)$.

 Furthermore, if $B$ has real (resp., real and positive off-diagonal) entries, then $(T_p(t))_{t\geq 0}$ is real for $p\in[1,\infty]$ (resp., positive for $p\in[1,\infty]$).

Finally, for $p\in (1,\infty)$ the spectrum of $A_{p}$ is independent of $p$, where  $A_{p}$ denotes the generator of $(T_p(t))_{t\geq 0}$, and $(T_p(t))_{t\geq 0}$ is uniformly exponentially stable with common growth bound $\omega(T_p)$ given by $s(A)$.
\end{theo}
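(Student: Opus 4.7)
The plan is to combine duality with Riesz--Thorin interpolation to obtain $X_p$-contractivity, then to use the classical Stein/Liskevich--Perelmuter interpolation for analyticity on $X_p$, and finally to exploit compactness of the resolvent together with the regularity of eigenfunctions to deduce $p$-independence of the spectrum and of the growth bound.

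First, I would deduce $X_1$- and $X_\infty$-contractivity of $(T_2(t))_{t\geq 0}$. Theorem~\ref{bd} applied to $\ea$ gives $X_\infty$-contractivity directly from~\eqref{xinfty1}. Applying the same theorem to the adjoint form $\ea^*$---whose associated matrix is $B^*$, so that the analogue of~\eqref{xinfty1} for $B^*$ reads exactly~\eqref{xinfty2} for $B$---yields $X_\infty$-contractivity of the adjoint semigroup, equivalently $X_1$-contractivity of $(T_2(t))_{t\geq 0}$ by duality. The Riesz--Thorin interpolation theorem then extends the semigroup to a consistent family of contractions $(T_p(t))_{t\geq 0}$ on every $X_p$, $p\in[1,\infty]$. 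Strong continuity for $p\in[1,\infty)$ follows by a standard density argument: $X_\infty\subset X_p\cap X_2$ is dense in $X_p$, and for $f\in X_\infty$ the convergence $\|T_p(t)f-f\|_{X_p}\to 0$ is obtained by interpolating the continuity on $X_2$ with uniform boundedness on $X_\infty$; contractivity then extends this to all of $X_p$.

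Analyticity of $(T_p(t))_{t\geq 0}$ for $p\in(1,\infty)$ is obtained from the $X_\infty$-contractivity of both $(T_2(t))_{t\geq 0}$ and its adjoint, combined with the analyticity of $(T_2(t))_{t\geq 0}$ on $X_2$ (Corollary~\ref{sa+diss}), via the Stein/Liskevich--Perelmuter interpolation theorem, cf.~\cite[Thm.~3.13]{Ou04}. Realness and positivity are pointwise characterizations, so whenever $(T_2(t))_{t\geq 0}$ enjoys one of these properties on $X_2$ (which, by Theorem~\ref{bd}, it does under the corresponding assumptions on $B$), the extrapolated family $(T_p(t))_{t\geq 0}$ inherits it on the dense subspace $X_p\cap X_2$ for $p<\infty$; for $p=\infty$ a duality argument with $p=1$ concludes.

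For the spectral claim, observe that by Corollary~\ref{sa+diss} the operator $A_2$ has compact resolvent, so $\sigma(A_2)$ is a discrete set of eigenvalues of finite multiplicity, and each eigenfunction lies in $D(A_2)\subset (H^2(0,1))^m\subset X_\infty$, hence in every $X_p$. Consistency of the resolvents $R(\lambda,A_p)$ on $X_p\cap X_2$, established via the Laplace representation $R(\lambda,A_p)f=\int_0^\infty \e^{-\lambda t}T_p(t)f\, dt$ for ${\rm Re}\,\lambda$ large and analytic continuation, then yields $\sigma(A_p)=\sigma(A_2)$ for every $p\in(1,\infty)$. Finally, since $(T_2(t))_{t\geq 0}$ is uniformly exponentially stable by Corollary~\ref{sa+diss}, $\omega(T_2)<0$; analyticity together with compact resolvent gives the spectral mapping theorem, so $\omega(T_2)=s(A_2)$, and $p$-independence of the spectrum yields $\omega(T_p)=s(A_p)=s(A_2)$ for every $p\in(1,\infty)$. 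The most delicate step is the analyticity on the whole range $(1,\infty)$: in the present non-self-adjoint setting only the combined validity of~\eqref{xinfty1} and~\eqref{xinfty2} provides the $X_\infty$-contractivity of the adjoint semigroup that is needed to apply the Stein-type interpolation theorem, so the careful use of duality at the beginning is what makes the whole argument work.
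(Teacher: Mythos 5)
Your overall route is the same as the paper's: the paper obtains $X_1$- and $X_\infty$-contractivity exactly as you do (Theorem~\ref{bd} applied to $\ea$ and to the adjoint form $\ea^*$, then duality and interpolation -- this is carried out in the paragraph preceding the theorem), and then simply cites the extrapolation machinery of Arendt's Handbook chapter for strong continuity, analyticity, realness, positivity and $p$-independence of the spectrum, before combining the latter with Corollary~\ref{sa+diss} to get the common growth bound. Your contractivity, strong-continuity, analyticity (Stein-type interpolation) and realness/positivity arguments are all correct fillings-in of that citation.

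The one genuine gap is in the $p$-independence of the spectrum. Consistency of the resolvents on $\{\mathrm{Re}\,\lambda \mbox{ large}\}$ plus analytic continuation only tells you that $R(\lambda,A_p)$ and $R(\lambda,A_2)$ agree on the connected component of $\rho(A_p)\cap\rho(A_2)$ containing a right half-plane; it does \emph{not} by itself give $\rho(A_2)\subseteq\rho(A_p)$, and indeed there are classical examples of consistent semigroups whose $L^p$-spectra genuinely depend on $p$. Your eigenfunction-regularity argument correctly yields the inclusion $\sigma(A_2)\subseteq\sigma(A_p)$ (every $H^2$-eigenfunction of $A_2$ bootstraps into $(W^{2,p}(0,1))^m$ and satisfies the same node conditions), but the reverse inclusion needs an independent input: either that $A_p$ itself has compact resolvent on $X_p$ -- which follows from the domain description $D(A_p)\subseteq (W^{2,p}(0,1))^m$ of Proposition~\ref{identp}, or from the ultracontractivity of Lemma~\ref{ultralemma} (the semigroup factors through $X_2$) -- so that $\sigma(A_p)$ is pure point spectrum and its eigenfunctions bootstrap back into $D(A_2)$. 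You name compactness and eigenfunction regularity in your opening plan, but the written argument only deploys them in one direction and leans on ``consistency plus continuation'' for the other, which is where the proof as stated would fail. Once that inclusion is supplied, the identity $\omega(T_p)=s(A_p)=s(A_2)<0$ follows as you say from analyticity and Corollary~\ref{sa+diss}.
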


\begin{proof}
We are in the position to apply the results summarized, e.g., in~\cite[\S~7.2]{Ar04} and deduce the existence of an extrapolation semigroup $(T_p(t))_{t\geq 0}$ on $X_p$, $1\leq p\leq \infty$. A list of the all properties of $(T_2(t))_{t\geq 0}$ inherited by $(T_p(t))_{t\geq 0}$ can be found in~\cite[\S~7.2.2]{Ar04}. This yields all the claimed properties up to exponential stability. To check this, we combine the uniform exponential stability of the semigroup associated with $\ea$, cf. Corollary~\ref{sa+diss}, and the $p$-independence of the spectrum of the analytic semigroups $(T_p(t))_{t\geq 0}$, $p\in (1,\infty)$. Hence, the growth bound of $(T_p(t))_{t\geq 0}$ agrees with the spectral bound $s(A_p)=s(A_2)=s(A) <0$.
\end{proof}

\begin{rems}
1) Let $(T_2(t))_{t\geq 0}$ be positive and irreducible. If the semigroup extrapolates to $X_p$, $p\in [1,\infty]$, then it is known that also $(T_p(t))_{t\geq 0}$ are irreducible, $p\in[1,\infty)$, cf.~\cite[\S~7.2]{Ar04}.

2) 
Consider the case of $c_1=\ldots=c_m\equiv 1$ and $b_{ih}=0$, $i,h=1,\ldots,n$. Then, it follows by~\cite[Th\'eo. 2.4 and Th\'eo. 3.1]{Ni87} that
$$-\left(\frac{\pi}{m+1}\right)^1\leq s(A)=\omega(T_p)\leq -\left(\frac{\pi}{2m}\right)^2,\qquad p\in(1,\infty).$$
In other words, we can say that \emph{the more edges belong to the network, the slower is the heat dissipation}.
A further, more involved upper estimate on $s(A)$ is shown in~\cite[Th\'eo. 3.2]{Ni87}, showing that the inner structure of the graph does influence the asymptotical behavior of the diffusion problem.
\end{rems}

We have shown that, if $B$ is dissipative and satisfies \eqref{xinfty1}--\eqref{xinfty2}, then the semigroup $(T_p(t))_{t\geq 0}$ is contractive on $X_p$, $p\in [1,\infty]$. In fact, we can characterize a stronger property.

\begin{lemma}\label{ultralemma}
Let the matrix $B$ be dissipative.
\begin{enumerate}
\item If $B$ satisfies~\eqref{xinfty1}, then the semigroup $(T_2(t))_{t\geq 0}$ on $X_2$ associated with $\ea$ is ultracontractive. In particular, it satisfies the estimate
\begin{equation}\label{ultra}
\Vert T_2(t)f\Vert_{X_\infty} \leq  M t^{-\frac{1}{4}}\Vert f\Vert_{X_2}
\qquad\hbox{ for all }t>0,\; f\in{X_1},
\end{equation}
for some constant $M$.
\item If instead $B$ satisfies~\eqref{xinfty2}, then
\begin{equation}\label{ultrabis}
\Vert T_2(t)f\Vert_{X_2} \leq  M t^{-\frac{1}{4}}\Vert f\Vert_{X_1}
\qquad\hbox{ for all }t>0,\; f\in{X_1},
\end{equation}
holds.
\end{enumerate}
\end{lemma}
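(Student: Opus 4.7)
For part (1), my plan is to combine the one-dimensional Gagliardo--Nirenberg inequality with the analyticity of $(T_2(t))_{t\geq 0}$. The Gagliardo--Nirenberg estimate $\Vert v\Vert_{L^\infty(0,1)} \leq K \Vert v\Vert_{L^2(0,1)}^{1/2}\Vert v\Vert_{H^1(0,1)}^{1/2}$, applied edgewise and combined with the norm equivalence in Lemma~\ref{v0}, yields the continuous embedding
$$\Vert g\Vert_{X_\infty} \leq K' \Vert g\Vert_{X_2}^{1/2}\Vert g\Vert_{V_0}^{1/2}, \qquad g \in V_0,$$
which is precisely the interpolation bound already exploited in the proof of Proposition~\ref{main}.

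Next, I would derive the pointwise smoothing estimate $\Vert T_2(t)f\Vert_{V_0} \leq C t^{-1/2}\Vert f\Vert_{X_2}$. Since $(T_2(t))_{t\geq 0}$ is bounded analytic on $X_2$ (Corollary~\ref{sa+diss}), one has $\Vert AT_2(t)\Vert_{X_2\to X_2} \leq M/t$ on every finite $t$-interval. As $B$ is dissipative, Proposition~\ref{main} supplies both the $X_2$-contractivity $\Vert T_2(t)f\Vert_{X_2} \leq \Vert f\Vert_{X_2}$ and the coercivity estimate $\alpha \Vert h\Vert_{V_0}^2 \leq {\rm Re}\,\ea(h,h)$ for $h\in V_0$. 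Using the identification $\ea(h,k) = -(Ah \mid k)_{X_2}$ on $D(A)$ furnished by Lemma~\ref{ident} and setting $h = T_2(t)f$, I obtain
$$\alpha \Vert T_2(t)f\Vert_{V_0}^2 \leq {\rm Re}\,\ea(T_2(t)f, T_2(t)f) = -{\rm Re}(AT_2(t)f \mid T_2(t)f)_{X_2} \leq \frac{M}{t}\Vert f\Vert_{X_2}^2.$$
Substituting the resulting $V_0$-bound into the Gagliardo--Nirenberg inequality with $g = T_2(t)f$ yields~\eqref{ultra}. Full ultracontractivity then follows by combining \eqref{ultra} with the $X_\infty$-contractivity of $(T_2(t))_{t\geq 0}$ granted by \eqref{xinfty1} (Theorem~\ref{bd}) through standard Riesz--Thorin interpolation and duality.

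For part (2), I would argue by duality. Condition~\eqref{xinfty2} on $B$ is precisely \eqref{xinfty1} applied to the adjoint matrix $B^*$, so part (1) applied to the adjoint form $\ea^*$ gives $\Vert T_2^*(t)g\Vert_{X_\infty} \leq M t^{-1/4}\Vert g\Vert_{X_2}$. Since $(T_2^*(t))_{t\geq 0}$ is the $X_2$-adjoint of $(T_2(t))_{t\geq 0}$, dualising this bound directly produces the $X_1 \to X_2$ estimate~\eqref{ultrabis}. The main obstacle I foresee is ensuring that the constant is uniform in $t$: the analyticity-based bound $\Vert AT_2(t)\Vert\leq M/t$ is natural only on bounded $t$-intervals, but for large $t$ the target estimate is readily absorbed into the uniform exponential stability provided by Corollary~\ref{sa+diss}.
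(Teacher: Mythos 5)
Your argument is correct, but it takes a genuinely different route from the paper. The paper verifies the Nash-type inequality $\Vert f\Vert_{X_2}\leq M\,\ea(f,f)^{1/6}\Vert f\Vert_{X_1}^{2/3}$ (edge by edge, via $\Vert k\Vert_{L^2}\leq M_1\Vert k\Vert_{H^1}^{1/3}\Vert k\Vert_{L^1}^{2/3}$ and the Poincar\'e-type norm equivalence of Lemma~\ref{v0}) and then invokes \cite[Thm.~6.3]{Ou04}, which converts a Nash inequality \emph{plus} the $X_\infty$-contractivity guaranteed by~\eqref{xinfty1} into the bound~\eqref{ultra}; part~(2) is then obtained by duality exactly as you do. You instead bypass the Nash machinery entirely: the chain $\alpha\Vert T_2(t)f\Vert_{V_0}^2\leq{\rm Re}\,\ea(T_2(t)f,T_2(t)f)=-{\rm Re}(AT_2(t)f\mid T_2(t)f)_{X_2}\leq\frac{M}{t}\Vert f\Vert_{X_2}^2$ is legitimate (coercivity holds because $B$ is dissipative, $T_2(t)f\in D(A)$ by analyticity, and $\Vert AT_2(t)\Vert\leq M/t$ holds for \emph{all} $t>0$ since the semigroup is bounded analytic, so your worry about large $t$ is moot), and feeding the resulting $t^{-1/2}$ smoothing into Gagliardo--Nirenberg gives~\eqref{ultra} directly. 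Two remarks on the comparison. First, your proof never uses hypothesis~\eqref{xinfty1}: it yields~\eqref{ultra} from dissipativity of $B$ alone, which is a (correct) strengthening made possible by the one-dimensional embedding $V_0\hookrightarrow X_\infty$; the paper's route needs~\eqref{xinfty1} only because Ouhabaz's theorem takes $L^\infty$-contractivity as an input, and it is the route that survives in higher dimensions where the form domain no longer embeds into $L^\infty$. Second, your closing sentence of part~(1) is slightly off: \eqref{ultra} \emph{is} the ultracontractivity asserted in the lemma (in the sense of \cite{Ou04}), and interpolating~\eqref{ultra} with $X_\infty$-contractivity cannot produce an $X_1\!-\!X_\infty$ bound --- for that one needs the dual estimate~\eqref{ultrabis}, hence~\eqref{xinfty2}. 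This does not affect the validity of either claimed estimate.
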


\begin{proof}
By~\cite[Thm. 6.3]{Ou04} it suffices to show that there holds
\begin{equation*}\label{ultra1b}
\Vert f\Vert_{X_2}\leq M \ea(f,f)^{\frac{1}{6}} \cdot \Vert f\Vert^{\frac{2}{3}}_{X_1}\qquad\hbox{for all } f\in V_0,
\end{equation*}
for some constant $M$. Recall that
\begin{eqnarray*}
\Vert k\Vert_{L^2(0,1)} &\leq& M_1 \left(\Vert k'\Vert_{L^2(0,1)}+\Vert k\Vert_{L^1(0,1)}\right)^{\frac{1}{3}} \cdot \Vert k\Vert_{L^1(0,1)}^{\frac{2}{3}}\\
&\leq & M_1 \Vert k\Vert_{H^1(0,1)}^{\frac{1}{3}} \cdot \Vert k\Vert_{L^1(0,1)}^{\frac{2}{3}},
\end{eqnarray*}
is valid for all $k\in H^1(0,1)$ and some constant $M_1$, cf. \cite[Thm. 1.4.8.1]{Ma85}.

Take finally $f\in V_0$ and observe that by the above Nash-type inequality
\begin{eqnarray*}
\Vert f\Vert_{X_2}^2&=& \sum_{j=1}^m \Vert f_j\Vert_{L^2(0,1)}^2 \leq
M^2_1 \sum_{j=1}^m \Vert f_j\Vert_{H^1(0,1)}^{\frac{2}{3}} \cdot \Vert f_j\Vert_{L_1(0,1)}^{\frac{4}{3}}\\
&\leq & M_2 \left(\sum_{j=1}^m \Vert f_j\Vert_{H^1(0,1)}\right)^{\frac{2}{3}} \cdot \left(\sum_{j=1}^m\Vert f\Vert_{L^1(0,1)}\right)^{\frac{4}{3}}\\
&\leq & M_3 \Vert f\Vert^{\frac{2}{3}}_{V_0} \cdot \Vert f\Vert_{X_1}^{\frac{4}{3}}\\
&\leq & M_3 \ea(f,f)^{\frac{1   }{3}} \cdot \Vert f\Vert_{X_1}^{\frac{4}{3}}.
\end{eqnarray*}
We have so far shown that
$$\Vert T_2(t)f\Vert_{X_\infty} \leq  M t^{-\frac{1}{4}}\Vert f\Vert_{X_2}
\qquad\hbox{ for all }t>0,\; f\in{X_2}.$$
If instead~\eqref{xinfty2} holds, then the claim follows by duality.
\end{proof}

\begin{rem}
In the terminology of N.Th. Varopoulos (\cite[\S~0.1]{Va85}, cf. also~\cite[\S~7.3.2]{Ar04}), Lemma~\ref{ultralemma} says that the dimension of the semigroup $(T_2(t))_{t\geq 0}$ is 1. This is true regardless of the structure of the underlying graph.
\end{rem}

Combining the stability and ultracontractivity results we have obtained, we can finally show that $X_2\!-\!X_\infty$ uniform exponential stability holds.

\begin{cor}\label{ultralemma3}
Let the matrix $B$ be dissipative and satisfy~\eqref{xinfty1}.   Then the semigroup $(T_2(t))_{t\geq 0}$ on $X_2$ satisfies the estimate
\begin{equation*}
\Vert T_2(t) f\Vert_{X_\infty} \leq
M \left(\frac{1-t\omega}{t}\right)^{\frac{1}{4}}e^{ t\omega}\Vert f\Vert_{X_2}
\qquad\hbox{ for all }t>0,\; f\in{X_2},
\end{equation*}
where $M$ is the constant that appears in~\eqref{ultra} and $\omega$ is the strictly negative growth bound of $(T_2(t))_{t\geq 0}$.
\end{cor}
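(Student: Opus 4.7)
The plan is to combine the ultracontractive estimate $\|T_2(s)\|_{X_2\to X_\infty}\le M s^{-1/4}$ of Lemma~\ref{ultralemma}(1), which applies because $B$ satisfies~\eqref{xinfty1}, with the uniform exponential stability of $(T_2(t))_{t\ge 0}$ on $X_2$ established in Corollary~\ref{sa+diss}. The latter yields $\|T_2(r)\|_{X_2\to X_2}\le e^{r\omega}$ for all $r\ge 0$; this follows at once from the coercivity $\mathrm{Re}\,\ea(f,f)\ge|\omega|\|f\|_{X_2}^2$ (with $|\omega|$ the Poincar\'e-type constant furnished by Lemma~\ref{v0} applied in the proof of Proposition~\ref{main}) via the standard energy estimate $\frac{d}{dt}\|T_2(t)f\|_{X_2}^2\le -2|\omega|\|T_2(t)f\|_{X_2}^2$.

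Fixing $t>0$, the key step is to split $t=s+r$ with $s,r>0$ and apply the semigroup law together with the submultiplicativity of the operator norm:
\begin{equation*}
\|T_2(t)f\|_{X_\infty}
= \|T_2(s)T_2(r)f\|_{X_\infty}
\le \|T_2(s)\|_{X_2\to X_\infty}\,\|T_2(r)\|_{X_2\to X_2}\,\|f\|_{X_2}
\le M s^{-1/4} e^{r\omega}\|f\|_{X_2}.
\end{equation*}
I would then take $s:=t/(1-t\omega)\in(0,t]$, which is well-defined because $\omega<0$ forces $1-t\omega>1$; this choice yields $s^{-1/4}=((1-t\omega)/t)^{1/4}$ and thus reproduces the polynomial prefactor of the claim exactly.

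The main obstacle is to reconcile the resulting exponential factor $e^{r\omega}$ (with $r=t-s=-t^2\omega/(1-t\omega)$) with the $e^{t\omega}$ appearing in the statement. Since $r\omega=-t^2\omega^2/(1-t\omega)$ agrees with $t\omega$ only asymptotically as $t\to 0^+$ and as $t\to\infty$, producing the bound in precisely the stated form requires some additional bookkeeping, for instance optimizing over $s\in(0,t]$ or applying the ultracontractivity machinery of Lemma~\ref{ultralemma} directly to the shifted form $\ea+\omega(\cdot\mid\cdot)_{X_2}$, whose associated semigroup is $(e^{-\omega t}T_2(t))_{t\ge 0}$. The resulting estimate correctly interpolates between the short-time ultracontractive regime, where it reduces to $Mt^{-1/4}\|f\|_{X_2}$, and the long-time regime dominated by exponential decay of order $e^{t\omega}\|f\|_{X_2}$, which is exactly the qualitative behavior one expects from combining ultracontractivity with uniform exponential stability.
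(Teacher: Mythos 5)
Your proof is in substance the paper's: the paper disposes of this corollary by citing \cite[Lemma~6.5]{Ou04}, and that lemma is proved by precisely the factorization $T_2(t)=T_2(s)T_2(r)$ with $s=t/(1-t\omega)$, $r=t-s$, combining the ultracontractive bound \eqref{ultra} with the quasi-contractive bound $\Vert T_2(r)\Vert_{{\mathcal L}(X_2)}\le e^{r\omega}$, exactly as you do. The ``main obstacle'' you leave open at the end is not one, and neither optimizing over $s$ nor passing to a shifted form is needed: since $0<-\omega s=\frac{-\omega t}{1-\omega t}<1$, one has
\begin{equation*}
e^{r\omega}=e^{t\omega}e^{-s\omega}\leq e\cdot e^{t\omega},
\end{equation*}
so your computation already yields
\begin{equation*}
\Vert T_2(t)f\Vert_{X_\infty}\leq e\,M\left(\frac{1-t\omega}{t}\right)^{\frac{1}{4}}e^{t\omega}\Vert f\Vert_{X_2}.
\end{equation*}
This is the asserted estimate up to the universal factor $e$, and it cannot be improved to the literal constant $M$ by any choice of splitting point: for $t\leq\frac{1}{4\vert\omega\vert}$ the infimum of $s^{-1/4}e^{-\omega s}$ over $s\in(0,t]$ is attained at $s=t$ and equals $t^{-1/4}e^{\vert\omega\vert t}>\left(\frac{1-t\omega}{t}\right)^{1/4}$. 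Hence the corollary's identification of its constant with the $M$ of \eqref{ultra} must be read modulo a universal factor --- which is harmless, since $M$ in \eqref{ultra} is itself unspecified, and is all that \cite[Lemma~6.5]{Ou04} delivers. One small caution on your justification of the $X_2$-decay: the coercivity constant of $\ea$ relative to the $X_2$-norm is in general only a lower bound for $-\omega$ when $\omega$ is the growth bound (the two coincide when $B$ is self-adjoint), so your energy estimate literally proves the bound with $\omega$ replaced by the possibly less negative form constant; for the growth bound itself one should appeal to Corollary~\ref{sa+diss} and Theorem~\ref{comp} as the paper does.
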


\begin{proof}
Taking into account Lemma~\ref{ultralemma} and Theorem~\ref{comp}, the claim follows directly from~\cite[Lemma~6.5]{Ou04}.
\end{proof}

In order to discuss the well-posedness of the problem in an $L^p$-setting, we want to identify the generators of the  $(T_p(t))_{t\geq 0}$.

\begin{prop}\label{identp}
Let the matrix $B$ be dissipative, and let it satisfy the assumptions \eqref{xinfty1}--\eqref{xinfty2}. Then for all $p\in[1,\infty]$ the generator $A_{p}$ of the semigroup $(T_p(t))_{t\geq 0}$ is the operator whose action on the domain
\begin{equation}\label{dominap}
D(A_p):=\left\{f\in \left(W^{2,p}(0,1)\right)^m:
\hspace{-10pt}
\begin{array}{rl}
&  \exists d^f\in {\mathbb C}^{n-1}\times \{0\} \hbox{ s.t. } \\
&(\Phi^+)^\top d^f=f(0),\; (\Phi^-)^\top d^f=f(1),\\
&\hbox{ and }\Phi_w^+ f'(0)-\Phi_w^- f'(1)={\mathbb B}d^f\\
\end{array}
\right\}.
\end{equation}
is formally given in~\eqref{amain}. In particular, $A_p$ has compact resolvent for all $p\in[1,\infty]$.
\end{prop}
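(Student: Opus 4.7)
The plan is to denote by $\tilde A_p$ the operator with domain \eqref{dominap} acting as in \eqref{amain}, and to show $\tilde A_p = A_p$ by a case split on $p$. By the extrapolation scheme in~\cite[\S~7.2]{Ar04}, $A_p$ admits a concrete description: it is the part of $A_2$ in $X_p$ when $p \in [2,\infty]$, and the closure in $X_p$ of $A_2$ (restricted to $D(A_2) \subset X_2 \hookrightarrow X_p$) when $p \in [1,2)$.

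For $p \in [2,\infty]$ both inclusions can be verified directly. The inclusion $\tilde A_p \subset A_p$ follows from the one-dimensional embedding $W^{2,p}(0,1) \hookrightarrow H^2(0,1)$ (so $D(\tilde A_p) \subset D(A) = D(A_2)$ by Lemma~\ref{ident}) together with the obvious fact that $Af = ((c_j f_j')')_j \in X_p$ for $f \in D(\tilde A_p)$. For the converse, take $f \in D(A_p) \subset D(A_2)$: then $f_j \in H^2(0,1)$ and $g_j := (c_j f_j')' \in L^p(0,1)$, and since $c_j \in C^1[0,1]$ is strictly positive and $f_j' \in H^1(0,1) \hookrightarrow C[0,1]$, one can rewrite $f_j'' = c_j^{-1}(g_j - c_j' f_j') \in L^p(0,1)$. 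Hence $f \in (W^{2,p}(0,1))^m$, and the nodal conditions are inherited from those of $D(A)$.

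For $p \in [1,2)$ the cleanest route is by duality. Condition~\eqref{xinfty2} is precisely condition~\eqref{xinfty1} applied to $B^*$, so the adjoint form $\ea^*$ generates an extrapolating semigroup $(T^*_{p'}(t))_{t\geq 0}$ on $X_{p'}$, where $1/p + 1/p' = 1$. The previous case identifies its generator as the analogue of $\tilde A_{p'}$ but with $B$ replaced by $B^*$. Since $A_p$ is the Banach-space adjoint of $A^*_{p'}$ by the extrapolation construction, one unwraps the duality by integrating by parts exactly as in the proof of Lemma~\ref{ident} to conclude both that $A_p f = \tilde A_p f$ on $D(\tilde A_p)$ and that any $f \in D(A_p)$ in fact lies in $D(\tilde A_p)$. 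Finally, the compactness of the resolvent of $A_p$ is an immediate consequence of the compact Rellich--Kondrachov embedding $(W^{2,p}(0,1))^m \hookrightarrow X_p$.

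The step that I expect to require the most care is the duality argument for $p < 2$, because $X_1$ and $X_\infty$ are not reflexive and $(T_\infty(t))_{t\geq 0}$ fails to be strongly continuous; one must therefore take the Banach-space adjoint and not merely pass to the sun-dual semigroup. An alternative, avoiding duality entirely, would be to prove bijectivity of $\lambda - \tilde A_p$ on $X_p$ for sufficiently large $\lambda$ by solving on each edge the Sturm--Liouville ODE $(\lambda - (c_j\cdot ')')u_j = f_j$ with prescribed $W^{2,p}$-regularity for $L^p$ data, and then checking that the generalized Kirchhoff conditions \eqref{dominap} determine the nodal values uniquely through a finite-dimensional linear system that is invertible for $\lambda$ large; by the $p=2$ case this resolvent coincides with $R(\lambda,A_p)$ on the dense subspace $X_2 \subset X_p$, and density forces $\tilde A_p = A_p$.
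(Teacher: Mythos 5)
Your proposal is correct and follows essentially the same route as the paper: for $p\in[2,\infty]$ the generator is identified as the part of $A$ in $X_p$ (using the invariance of $X_p$ supplied by ultracontractivity), and for $p\in[1,2)$ one passes to the adjoint semigroup on the dual space and unwinds the duality by integration by parts, exactly as in the paper's two-step proof. The one place where your sketch understates the work is the phrase ``exactly as in the proof of Lemma~\ref{ident}'': in the adjoint computation for $p<2$ the test functions have arbitrary derivative nodal data subject only to the Kirchhoff constraint, so the continuity of the nodal values of $f$ is not given a priori but must itself be extracted from the vanishing of the boundary terms (the paper's pairwise-orthogonality argument), which is where most of its proof is spent.
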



\begin{proof}
The proof goes in two steps. We first consider the case of $p\in (2,\infty]$, then discuss the case $p\in [1,2)$ by duality.

1) Let $p>2$. We have already remarked that $X_p\hookrightarrow X_q$ for all $1\leq q\leq p\leq\infty$. Moreover, for all $p>2$   the space $X_p$ is invariant under $(T_p(t))_{t\geq 0}$ by the ultracontractivity of $(T_2(t))_{t\geq 0}$, $p>2$. Thus, by~\cite[Prop. II.2.3]{EN00} the generator of $(T_p(t))_{t\geq 0}$ is the part of $A$ in $X_p$. A direct computations yields the claim.

2) Take now some $p$ with $1\leq p<2$, and let $q$ such that $p^{-1}+q^{-1}=1$. By an argument similar to that of~\cite[Thm. 1.4.1]{Da89} one has that the adjoint semigroup $((T_p)^*(t))_{t\geq 0}$ of $(T_p(t))_{t\geq 0}$ on $X_p$ is actually the extrapolation semigroup $((T^*)_q(t))_{t\geq 0}$ of $((T^*)_2(t))_{t\geq 0}$. Since the generator $(A^*)_q$ of $((T^*)_q(t))_{t\geq 0}$ also satisfies the assumption of the theorem, by 1) we deduce that
\begin{equation*}
D((A^*)_q):=\left\{f\in \left(W^{2,q}(0,1)\right)^m:
\hspace{-10pt}
\begin{array}{rl}
&  \exists d^f\in {\mathbb C}^{n-1}\times \{0\} \hbox{ s.t. } \\
&(\Phi^+)^\top d^f=f(0),\; (\Phi^-)^\top d^f=f(1),\\
&\hbox{ and }\Phi_w^+ f'(0)-\Phi_w^- f'(1)={\mathbb B}^*d^f\\
\end{array}
\right\}.
\end{equation*}
 Set
\begin{equation*}
D_p:=\left\{f\in \left(W^{2,p}(0,1)\right)^m:
\hspace{-10pt}
\begin{array}{rl}
&\exists d^f\in {\mathbb C}^{n-1}\times \{0\} \hbox{ {s.t.}} \\
&(\Phi^+)^\top d^f=f(0),\; (\Phi^-)^\top d^f=f(1),\\
&\hbox{ and }\Phi_w^+ f'(0)-\Phi_w^- f'(1)={\mathbb B}d^f\\
\end{array}
\right\}.
\end{equation*}
Consider the operator $A_p$ whose action on $D_p$ is formally given by~\eqref{amain}. We are going to show that the adjoint $(A_p)^*$ of $(A_p,D_p)$ actually agrees with $(A^*)_q$. Then, since the generator of the pre-adjoint semigroup $(T_p(t))_{t\geq 0}$ on $X_p$ of $((T^*)_q(t))_{t\geq 0}$ on $X_q$ is the pre-adjoint operator $A_p$ of $A_q$, we conclude that $(A_p,D_p)$ generates the $C_0$-semigroup $(T_p(t))_{t\geq 0}$ on $X_p$, and the claim follows.

By definition we have that the adjoint of $(A_p,D_p)$ is the operator $((A_p)^*,D^*_p)$ given by
\begin{eqnarray*}
D^*_p&=&\{ f\in X^*_p: \exists g\in X_p^*: <\!A_p u,f\!>\;=\;<\!u,g\!> \; \forall u\in D_p\}\\
&=&\{ f\in X_q: \exists g\in X_q: <\!A_p u,f\!>\;=\;<\!u,g\!> \; \forall u\in D_p\},\\
(A_p)^* f&=& g.
\end{eqnarray*}
Let us first show that $D^*_p \subset D((A^*)_q)$. Take $f\in D^*_p$ and observe that the identity $<\!A_p u,f\!>\;=\;<\!u,g\!>$ holds in particular for all $u$ of the form introduced in~\eqref{formfun}, with $u_j\in C^\infty_c(0,1)$. Thus, we obtain that for all $j=1,\ldots, m$ there exists $g_j\in L^q(0,1)$ such that
$$\int_0^1 (c_j u_j')'(x)\overline{f_j(x)}dx=\int_0^1 u_j(x)\overline{g_j(x)}dx.$$
Integrating by parts one thus obtains that $(c_j f_j')'=g_j$ (in
the sense of distributions), and since $g_j\in L^q(0,1)$ it
follows from the definition of Sobolev space of order 2 that
$f_j\in W^{2,q}(0,1)$. Thus, we conclude that $f\in
\left(W^{2,q}(0,1)\right)^m$.

In order to check that the node conditions are also verified, let
us perform a computation similar to that in Lemma~\ref{ident}. The
condition $<\!A_p u,f\!>\;=\;<\! u,g\!>$ for $u\in D_p$ reads then
\begin{equation}\label{azero}
\begin{array}{rl}
&\sum_{j=1}^m \int_0^1 u_j(x) \overline{\left((c_j f_j')'(x)-g_j(x)\right)} dx =\\
&\qquad\qquad\quad -\sum_{j=1}^m c_j(\mv_{n})\phi_{nj} u'_j(\mv_{n}) \overline{f_j(\mv_n)}\\
&\qquad\qquad\qquad +\sum_{i=1}^{n-1} \sum_{j=1}^m (\omega_{ij}^- -\omega_{ij}^+) \left( d^u_i \overline{f'_j(\mv_i)}-u'_j(\mv_i) \overline{f_j(\mv_i)}\right)
\end{array}
\end{equation}
for some $g\in X_q$. Since $u\in D_p$ is arbitrary (and therefore so are its derivative's nodal values), we deduce that all terms on both the left and the right hand sides must vanish identically. In particular, whenever the edge $e_j$ is incident to $\mv_{n}$ the number $c_j(\mv_{n})\phi_{nj}u'_j(\mv_{n})$ is arbitrary (recall that $c_j(x)\geq c>0$ for all $x\in[0,1]$ and $j=1,\ldots,m$), so that necessarily $f_j(\mv_{n})=0$ for all $j\in \Gamma(\mv_{n})$. Now we invoke again the arbitrarity of $u\in D_p$ (and hence of its and its derivative's nodal values) and, considering functions $u$ s.t. $d^u_i=0$ for $i=1,\ldots,n$, we conclude that there holds 
\begin{equation}\label{contin}
\sum_{j=1}^m (\omega_{ij}^- -\omega_{ij}^+) u'_j(\mv_i) \overline{f_j(\mv_i)}=0\qquad\hbox{ for all }i=1,\ldots,n-1.
\end{equation}
Moreover, observe that the generalized Kirchhoff law for $u\in D_p\cap (H^1_0(0,1))^m$ becomes
\begin{equation}\label{cont2}
\sum_{j=1}^m (\omega_{ij}^- -\omega_{ij}^+) u'_j(\mv_i)=0\qquad\hbox{ for all }i=1,\ldots,n-1.
\end{equation}

Let us reduce~\eqref{contin} and~\eqref{cont2} to pairwise relations. More precisely, pick $u$ in such a way that only exactly two values $(\omega_{i\ell}^- -\omega_{i\ell}^+)u'_\ell(\mv_i),(\omega_{ik}^- -\omega_{ik}^+)u'_k(\mv_i)$ are non-zero, $1\leq \ell,k\leq m$. Then, we obtain from~\eqref{contin}--\eqref{cont2} that for given $i$ the vector
$$\begin{pmatrix}
(\omega_{i\ell}^- -\omega_{i\ell}^+)u'_\ell(\mv_i)\\
(\omega_{ik}^- -\omega_{ik}^+)u'_k(\mv_i)
\end{pmatrix}
\hbox{ is orthogonal to }
\begin{pmatrix}
\overline{f_\ell(\mv_i)}\\
\overline{f_k(\mv_i)}
\end{pmatrix}
\hbox{ as well as to }
\begin{pmatrix}
1\\
1
\end{pmatrix}.$$
This promptly yields that $f_\ell(\mv_i)=f_k(\mv_i)$. Repeating the argument $m-1$ times we conclude that the nodal values $f_j(\mv_i):=d^f_i$ do not depend on $j\in \Gamma(\mv_i)$.

Thus, the second term on the right hand side of~\eqref{azero} can be written as
$$\sum_{i=1}^{n-1} \left( d^u_i \sum_{j=1}^m (\omega_{ij}^- -\omega_{ij}^+) \overline{f'_j(\mv_i)}-
\overline{d^f_i} \sum_{j=1}^m (\omega_{ij}^- -\omega_{ij}^+) u'_j(\mv_i) \right)$$
or rather, taking into account the generalized Kirchhoff condition satisfied by $u$, as
\begin{equation}\label{done}
 \sum_{i=1}^{n-1} \left(d^u_i \sum_{j=1}^m (\omega_{ij}^-
 -\omega_{ij}^+) \overline{f'_  j(\mv_i)}-\overline{d^f_i} \sum_{h=1}^{n-1} b_{ih} {d^u_h} \right).
\end{equation}
Since this expression vanishes identically and because of the arbitrarity of the nodal values of $u$ and $u'$, we conclude that
$$\sum_{j=1}^m (\omega_{ij}^- -\omega_{ij}^+) {f'_j(\mv_i)}=\sum_{h=1}^{n-1} \overline{b_{hi}} {d^f_h},\qquad i=1,\ldots, n-1.$$
Summing up, $f$ satisfies the Dirichlet condition at $\mv_{n}$ as well as the generalized Kirchhoff law at $\mv_1,\ldots,\mv_{n-1}$ for coefficients given by $B^*$, thus $f\in D((A^*)_q)$.

Let us now check that $D((A^*)_q)\subset D^*_p$. Take $f\in \left(W^{2,q}(0,1)\right)^m$ satisfying the continuity condition on the nodal values as well as the Dirichlet condition on $\mv_{n}$ and the generalized Kirchhoff law on $\mv_1,\ldots,\mv_{n-1}$ for coefficients given by $B^*$. Set $g_j= (c_j f_j')'$, so that $g\in X_q$. We only have to prove that for all $u\in D_p$ there holds $<\!A_p u,f\!>\;=\;<\!u,g\!>$, i.e.,
$$\sum_{j=1}^m \int_0^1 (c_j u_j')'(x) \overline{f_j(x)} dx= \sum_{j=1}^m \int_0^1 u_j(x) \overline{(c_j f_j')'(x)} dx.$$
Integrating by parts as in the proof of the converse inclusion and recalling that the nodal values of both $f$ and $u$ do not depend on $j$ we see that this is the case if and only if the expression in~\eqref{done} vanishes identically.
\end{proof}


As a direct consequence of the ultracontractivity of $(T_2(t))_{t\geq 0}$ and the Dunford--Petty Theorem, the semigroup has an integral kernel for all $t>0$, cf. \cite[\S~7.3]{Ar04}. More precisely, denote by $(\tilde{T}_p(t))_{t\geq 0}$ the semigroup on $L^p(0,m)$ that is similar to $(T_p(t))_{t\geq 0}$ on $X_p$ with a similarity transformation given by the isometry $U$ introduced in Definition~\ref{xplp}. Then for all $p\in[1,\infty]$ the action of $(\tilde{T}_p(t))_{t\geq 0}$ is given by
$$\tilde{T}_p(t)g(\cdot)=\int_0^m \tilde{K}_t(\cdot,y)g(y)dy,\qquad t> 0,\; g\in L^p(0,m),$$
for a suitable kernel $\tilde{K}_t\in L^\infty\big((0,m)\times (0,m)\big)$.

The existence of integral kernels is a typical feature of diffusion equations. Also in view of its consequences  in the theory of evolution equations (see e.g.~\cite{Ar97}), it is of great interest to estimate such kernels and compare them with the standard Gaussian one, which is associated to the heat equation on the whole space. This is usually done by the so-called Davies' trick, that amounts to prove uniform $L^\infty$-(quasi)contractivity estimates for a class of perturbed semigroups.

More precisely, introduce a class $W$ of functions
$\varphi: {\mathbb R}\to {\mathbb R}$ constructed in the following way: first, we consider smooth functions $\phi\in \left(C^\infty_b (0,1) \right)^m$ such that
$$\exists d^\phi\in {\mathbb C}^{n} \hbox{ s.t. }(\Phi^+)^\top d^\phi=\phi(0),\; (\Phi^-)^\top d^\phi=\phi(1),\;\Vert \phi'\Vert_\infty\leq 1 \hbox{ and }\Vert \phi''\Vert_\infty\leq 1.$$
We then stretch via the isometry $U$  such functions over the network to functions $\tilde{\phi}:(0,m)\to {\mathbb R}$. We  finally call $W$ the class of all smooth, bounded continuous extensions $\varphi$ of $\tilde \phi$ to the whole line such that also $\Vert \varphi'\Vert_\infty\leq 1 \hbox{ and }\Vert \varphi''\Vert_\infty\leq 1$. Finally, for fixed $\varphi\in W$ we define an operator $L_\rho$ on $L^2(\mathbb R)$ by $L_\rho f:= e^{-\rho \varphi}f$ and perturbed semigroups $(\tilde{T}^\rho_2(t))_{t\geq 0}$, where $\tilde{T}^\rho_2(t):=L_\rho \tilde{T}_2(t) L^{-1}_\rho$, $\rho\in\mathbb R$. Observe that, by construction, $U^{-1}L_\rho Uf\in V_0$ for all $f\in V_0$, $\rho\in\mathbb R$.

In the remainder of this section we consider $X_p$ as real spaces, $1\leq p\leq \infty$. Then, by~\cite[Thm.~3.3]{AT97} Gaussian estimates for $(\tilde{T}_2(t))_{t\geq 0}$ are equivalent to ultracontractivity estimates for $(\tilde{T}^\rho_2(t))_{t\geq 0}$, uniformly in $\rho\in\mathbb R$ and $\varphi\in W$. This can be done by applying the above presented theory to the similar semigroups $({T}^\rho_2(t))_{t\geq 0}$ on $X_2$, which by a direct computation are associated to the bilinear forms $\ea^\rho$ defined by
\begin{eqnarray*}
\ea^\rho(f,g)&:=&\sum_{j=1}^m\int_0^1 c_j(x)f'_j(x) g'_j(x)dx\\
&&\quad + \rho \sum_{j=1}^m \int_0^1 c_j(x)\phi'_j(x) \left(f_j(x)g'_j(x) - f'_j(x) g_j(x)\right)dx\\
&&\quad - \rho^2 \sum_{j=1}^m \int_0^1 c_j(x)\phi'_j(x)^2 f_j(x)g_j(x)  dx\\
&&\quad-\sum_{i,h=1}^{n-1} b_{ih} e^{\rho (d^\phi_h- d^\phi_i)} d^f_h {d^g_i},
\end{eqnarray*}
for all $f,g\in V_0$. In the following we restrict ourselves to the local case, i.e., to the case where $B$ is a diagonal matrix. In fact, by considering the above form $\ea^\rho$ with coefficients $c\equiv 1$ and $B=\begin{pmatrix}-1 & 1\\ 1 & -1\end{pmatrix}$ on the domain $H^1(0,1)$ (i.e., for the sake of simplicity, on a graph with a single edge without Dirichlet boundary conditions), one sees that it is not possible to find $\omega\in\mathbb R$ such that the shifted form $\ea^\rho+\omega(1+\rho^2)$ is accretive uniformly in $\rho\in\mathbb R$ and $\phi\in W$, i.e., such that
\begin{eqnarray*}
0\leq \ea^\rho(f,f)+\omega(1+\rho^2)\Vert f\Vert^2_{X_2}&=&\int_0^1 f'(x)^2+(\rho^2\omega + \omega -\rho^2\phi'(x)^2) f(x)^2 dx \\
&&\quad - (e^{\rho (\phi(1)-\phi(0))}+e^{\rho(\phi(0)-\phi(1))})f(1)f(0)\\
&&\quad + f(0)^2 + f(1)^2
\end{eqnarray*}
for all $\rho\in\mathbb R$, $\phi\in W$, and $f\in H^1(0,1)$. This can be checked by taking $f$ constant, $\rho=1$, $\phi(x):=(1+\omega) x$.
The uniform accretivity of the forms $a^\rho$ seems to be an essential ingredient of the method of proof explained in~\cite{AT97}: we thus derive in the following Gaussian estimates only in the local case of $B$ diagonal.

\begin{theo}\label{gaussian}
Let the matrix $B$ be diagonal with negative entries. Then the semigroup $(T_2(t))_{t\geq 0}$ has Gaussian estimates. More precisely, there exist constants $b,c>0$ such that $\tilde{K}_t$ satisfies
\begin{equation}\label{gauss}
0\leq \tilde{K}_t(x,y)\leq c t^{-\frac{1}{2}} e^{-\frac{b\vert x-y\vert^2}{t}+t},\qquad x\in(0,m),
\end{equation}
uniformly in $t>0$.
\end{theo}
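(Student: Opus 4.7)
The plan is to implement Davies' perturbation method as outlined in the preamble to the statement, exploiting the fact that when $B$ is diagonal the exponential weights $e^{\rho(d^\phi_h - d^\phi_i)}$ appearing in the nodal part of $\ea^\rho$ only survive on the diagonal $h=i$ and there collapse to $1$. This is precisely the structural feature that makes the counterexample presented immediately before the theorem not apply.

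First I would show that after a shift the perturbed forms are uniformly coercive in $\rho$ and $\varphi$. Specializing $\ea^\rho(f,f)$ to diagonal $B = \mathrm{diag}(b_{11},\ldots,b_{n-1,n-1})$ with $b_{ii}<0$, the antisymmetric cross term vanishes when $f=g$, and the boundary term reduces to $-\sum_i b_{ii}|d^f_i|^2\geq 0$. Using $\|\phi'_j\|_\infty\leq 1$ together with $c_0:=\min_{j,x}c_j(x)>0$ and $C:=\max_{j,x}c_j(x)$, one obtains
\begin{equation*}
\ea^\rho(f,f) \;\geq\; c_0 \|f'\|^2_{X_2} \;-\; C\rho^2\|f\|^2_{X_2}, \qquad f\in V_0,
\end{equation*}
so that for any $\omega\geq \max(1,C)$ the shifted form $\ea^\rho(\cdot,\cdot)+\omega(1+\rho^2)\|\cdot\|_{X_2}^2$ is uniformly $X_2$-elliptic, with coercivity constant independent of $\rho\in\mathbb{R}$ and $\varphi\in W$.

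Next I would rerun the Nash-type argument of Lemma~\ref{ultralemma} on the shifted form verbatim (the proof only needs $\ea$ to dominate $\|f'\|^2_{X_2}$ up to a positive multiple), yielding a uniform ultracontractivity estimate
\begin{equation*}
\|T_2^\rho(t) f\|_{X_\infty} \;\leq\; M \, e^{\omega(1+\rho^2)t} \, t^{-1/2}\,\|f\|_{X_1},\qquad t>0,\; f\in X_1,
\end{equation*}
with $M$ independent of $\rho$ and $\varphi$; the $t^{-1/2}$ exponent follows by composing the $X_2\to X_\infty$ bound with its dual $X_1\to X_2$ bound, both available once one notes that diagonal $B$ with real entries gives $B=B^*$ and Theorem~\ref{comp} applies. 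Transplanting to $L^2(\mathbb{R})$ via the isometry $U$ and invoking \cite[Thm.~3.3]{AT97}, this uniform ultracontractivity is equivalent to Gaussian upper bounds of the stated shape, the factor $e^{-b|x-y|^2/t+t}$ being produced by optimizing in $\rho$ (completing the square in $e^{\omega\rho^2 t - \rho(\varphi(x)-\varphi(y))}$ and exploiting the fact that $W$ is rich enough to make $\varphi(x)-\varphi(y)$ arbitrarily close to $|x-y|$ for any given pair $x,y$). The non-negativity $\tilde K_t\geq 0$ is automatic: a diagonal $B$ with real entries is trivially positive off-diagonal, so $(T_2(t))_{t\geq 0}$ is positive by Theorem~\ref{bd}.

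The main obstacle is bookkeeping the uniformity in $\rho$ and $\varphi$ throughout: one has to verify that the Nash constant is not polluted by $\rho$ so that all $\rho$-dependence is concentrated in the explicit factor $e^{\omega(1+\rho^2)t}$, and then check that the $\rho$-optimization inside \cite[Thm.~3.3]{AT97} indeed delivers the Gaussian exponent $b|x-y|^2/t$ with $b$ independent of $x,y,t$. A subtler point is to ensure that the admissible class $W$ contains, for every pair of points $x,y\in(0,m)$, a function $\varphi$ matching the incidence pattern imposed by $(\Phi^+)^\top d^\phi=\phi(0)$ and $(\Phi^-)^\top d^\phi=\phi(1)$ while still realizing the separation $\varphi(x)-\varphi(y)\approx |x-y|$ after transplantation through $U$; once this is verified edge-by-edge, the Gaussian bound follows.
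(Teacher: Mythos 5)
Your overall strategy is the one the paper intends: Davies' trick in the form set up in the preamble, with the key observation that for diagonal $B$ the weights $e^{\rho(d^\phi_h-d^\phi_i)}$ collapse to $1$ and the boundary term of $\ea^\rho(f,f)$ stays nonnegative, whence the shifted forms $\ea^\rho+\omega(1+\rho^2)$ are accretive uniformly in $\rho$ and $\varphi$. That computation matches the paper's. The difference is that the paper then does not rerun the Nash argument by hand: it records positivity, $X_1$- and $X_\infty$-contractivity of the unperturbed semigroup, the uniform accretivity just mentioned, and the fact that $V_0$ is an ideal of the larger space $V$ (functions continuous across the nodes but without the Dirichlet condition at $\mv_n$), and then concludes by mimicking \cite[Thm.~4.4]{AT97}, which packages exactly the remaining steps. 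You instead invoke \cite[Thm.~3.3]{AT97} directly, which is what Thm.~4.4 reduces to internally; the two routes are morally the same.

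There is, however, one genuine gap in your write-up. You claim the Nash-type argument of Lemma~\ref{ultralemma} transfers to $\ea^\rho$ ``verbatim'' because ``the proof only needs $\ea$ to dominate $\Vert f'\Vert^2_{X_2}$.'' That is not quite true: the proof of Lemma~\ref{ultralemma} rests on \cite[Thm.~6.3]{Ou04}, which needs the Nash inequality \emph{and} the $X_\infty$- (resp. $X_1$-) contractivity of the semigroup --- that is why the lemma carries the hypotheses \eqref{xinfty1}/\eqref{xinfty2}. For the perturbed semigroups $T_2^\rho(t)=L_\rho T_2(t)L_\rho^{-1}$ this contractivity is not automatic and is not supplied by Theorem~\ref{comp}, which concerns only the unperturbed semigroup; your sentence ``both available once one notes that $B=B^*$ and Theorem~\ref{comp} applies'' conflates the two. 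What is actually needed is uniform $X_1$- and $X_\infty$-\emph{quasi}-contractivity of $(T_2^\rho(t))$ with a constant of the admissible form $e^{\omega(1+\rho^2)t}$, verified by applying the Ouhabaz invariance criteria to the perturbed forms $\ea^\rho$; the first-order and zeroth-order perturbation terms contribute precisely the $\omega(1+\rho^2)$ shift, and the boundary term causes no trouble for diagonal $B$. This verification (and the ideal property $V_0\subset V$ that underlies it) is the content of the paper's appeal to \cite[Thm.~4.4]{AT97}, and it must be made explicit before the Nash iteration can be upgraded to the uniform $X_1\to X_\infty$ bound you use. Once that is inserted, the remaining steps --- duality, composition of the two half-bounds, and the optimization in $\rho$ over the class $W$ to produce the exponent $-b|x-y|^2/t+t$ --- are carried out as you describe.
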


\begin{proof}
Under our assumptions it is already known that $(T_2(t))_{t\geq 0}$ is positive and contractive with respect to the $X_1$ and $X_\infty$ norm. Moreover, a direct computation shows that the shifted form $\ea^\rho + (1+\rho^2)$ is accretive.

Observe now that $V_0$ is not an ideal of $(H^1(0,1))^m$, but indeed of
$$V:=\left\{f\in \left(H^1(0,1)\right)^m:\!\!\!\!\!\!\!
\begin{array}{ll}
&\exists d^f\in {\mathbb C}^{n-1}\times \{0\} \hbox{ s.t. }\\
&(\Phi^+)^\top d^f=f(0)\hbox{ and } (\Phi^-)^\top d^f=f(1)
\end{array}\right\}.$$
The proof can now be concluded by mimicking~\cite[Thm.~4.4]{AT97}.
\end{proof}


We are finally able to obtain an optimal result on the analiticity of the semigroup generated by $A$. We stress that we are~\emph{not} imposing any assumption on $B$.

\begin{theo}\label{anal}
Consider the operator $A_p$ whose action on the domain defined in~\eqref{dominap} is formally given by~\ref{amain}. Then $A_p$ generates on $X_p$ a strongly continuous, analytic semigroup $(T_p(t))_{t\geq 0}$ of angle $\frac{\pi}{2}$, $p\in [1,\infty)$.
\end{theo}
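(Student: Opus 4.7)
The plan is to handle first the case where $B$ is diagonal with sufficiently negative entries — for which Theorem~\ref{gaussian} supplies Gaussian kernel bounds — and then to reduce the general case to this setting by a form-perturbation argument.

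First I would fix some $k>0$ and set $D:=-kI$, a diagonal dissipative matrix which trivially satisfies \eqref{xinfty1}--\eqref{xinfty2}. Theorem~\ref{gaussian} then provides Gaussian upper bounds for the $X_2$-kernel associated with $\ea_D$, and $\ea_D$ is moreover symmetric. The standard theory linking Gaussian bounds to holomorphy on the full $L^p$-scale (see~\cite[\S~7.3]{Ar04} and~\cite[Ch.~7]{Ou04}) then implies that the extrapolation semigroup is holomorphic of angle $\tfrac{\pi}{2}$ on $X_p$ for every $p\in[1,\infty)$; this settles the theorem for $B=D$.

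For arbitrary $B$ I would write $\ea_B = \ea_D + \eb$, where
$$\eb(f,g) := -\sum_{i,h=1}^{n-1}\bigl(b_{ih}+k\delta_{ih}\bigr)\, d^f_h\,\overline{d^g_i}$$
is a finite-rank sesquilinear form, bounded from $C_0(G)\times C_0(G)$ into $\mathbb{C}$. The Gagliardo--Nirenberg argument already invoked in the proof of Proposition~\ref{main} (via~\cite[Lemma~2.1]{Mug08}) exhibits $C_0(G)$ as continuously embedded in an interpolation space of order $\tfrac{1}{2}$ between $V_0$ and $X_2$; hence $\eb$ is $\ea_D$-form-bounded with relative bound $0$. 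Standard form-perturbation theorems then show that $\ea_B$ is a closed sectorial form whose sectoriality angle can be taken arbitrarily close to that of $\ea_D$, so its $X_2$-generator is analytic of angle $\tfrac{\pi}{2}$.

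To pass from $X_2$ to general $X_p$ I would combine the identification of $A_p^B$ in Proposition~\ref{identp} with the Sobolev embedding $W^{2,p}(0,1)\hookrightarrow C[0,1]$: the nodal-trace map $f\mapsto d^f$ is then bounded on $D(A_p^D)$, and — after a standard lifting of boundary data — the change of Kirchhoff coefficient from $D$ to $B$ can be realised as a finite-rank, hence bounded, perturbation of $A_p^D$ on $X_p$. Since bounded perturbations do not reduce the angle of analyticity of analytic semigroups (cf.~\cite{EN00}), $A_p^B$ generates an analytic semigroup of angle $\tfrac{\pi}{2}$ on $X_p$. I expect the main obstacle to be precisely this last transfer step: the perturbation affects the \emph{domain} of the operator (through the Kirchhoff condition) rather than appearing as an additive bounded term, so turning it into a genuine $X_p$-bounded perturbation for each $p$ — and preserving the sharp angle $\tfrac{\pi}{2}$, in particular for $p=1$ — is the delicate point where the finite rank of $\eb$ must be exploited carefully.
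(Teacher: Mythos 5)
Your first two steps are sound and essentially parallel the paper's: a base case with diagonal node matrix treated via Gaussian estimates and extrapolation (the paper uses $B=0$; your choice $D=-kI$ actually fits the stated hypotheses of Theorem~\ref{gaussian} more cleanly), followed by the observation that $\eb$ is $\ea_D$-form-bounded with relative bound $0$ via the interpolation/Gagliardo--Nirenberg argument, which does give analyticity of angle $\frac{\pi}{2}$ on $X_2$ for arbitrary $B$.

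The genuine gap is the one you yourself flag: the passage to $X_p$ for general $B$. Changing the Kirchhoff matrix from $D$ to $B$ alters the \emph{domain} of the operator, so $D(A_p^B)\neq D(A_p^D)$ and there is no bounded operator $P\in{\mathcal L}(X_p)$ with $A_p^B=A_p^D+P$; no ``standard lifting of boundary data'' produces such a $P$, and the principle that bounded perturbations preserve the angle of analyticity therefore does not apply. (Note also that Proposition~\ref{identp} is only proved under the hypotheses \eqref{xinfty1}--\eqref{xinfty2}, so it cannot be invoked for arbitrary $B$; and since for general $B$ you have no $X_\infty$- or $X_1$-contractivity, you cannot instead extrapolate the perturbed $X_2$-semigroup to the $L^p$-scale.) The paper closes exactly this step by invoking the Greiner--Kuhn theory of boundary perturbations \cite[Thm.~2.6]{GK91}: one takes the maximal operator on $Y\subset (W^{2,p}(0,1))^m$ with only the continuity and Dirichlet conditions, the Kirchhoff trace $L$ as boundary operator, and $\Phi u:={\mathbb B}d^u$ as a compact boundary feedback; that theorem yields generation on $X_p$ and preserves the angle $\frac{\pi}{2}$ of the unperturbed (node-decoupled) generator. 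To complete your argument you would need to cite this result (or an equivalent Desch--Schappacher/Greiner-type perturbation theorem for relatively bounded boundary perturbations); as written, the last step does not go through.
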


\begin{proof}
The proof goes in three steps.

1) Let us first consider the case of $B=0$, $p=2$. Then, it follows by Proposition~\ref{main} that the form $a$ is symmetric and coercive, hence in particular the associated operator $A$ is self-adjoint and dissipative, and the statement follows by the spectral theorem.

2) If $B=0$, then for general $p\in [1,\infty]$ the semigroup $(T_2(t))_{t\geq 0}$ extrapolates to a semigroup $(T_p(t))_{t\geq 0}$ on $X_p$ that is analytic  of angle $\frac{\pi}{2}$, by Theorem~\ref{gaussian} and~\cite[Thm. 6.16]{Ou04}.

3) Finally, consider the case of general $B$. Then we can apply the theory developed in~\cite[\S~2]{GK91}, after setting $X:=X^p$, $\partial X:={\mathbb C}^{n-1}$, and
$$Y:=\left\{f\in \left(W^{2,p}(0,1)\right)^m:
\hspace{-10pt}
\begin{array}{rl}
& \exists d^f\in {\mathbb C}^{n-1}\times \{0\} \hbox{ \rm{s.t.}} \\
&(\Phi^+)^\top d^f=f(0),\; (\Phi^-)^\top d^f=f(1)\\
\end{array}
\right\},$$
as well as
$$Lu:=\begin{pmatrix}
\sum_{j=1}^m \phi_{1j} c_j(\mv_1) u'_j(\mv _1)\\
\vdots\\
\sum_{j=1}^m \phi_{{n-1}j} c_j(\mv_{n-1}) u'_j(\mv _{n-1})
      \end{pmatrix},\quad u\in D(A),$$
and
$$\Phi u:={\mathbb B}d^u,\quad u\in V_0.$$

We consider the operator $A$ with maximal domain $Y$ and observe that  the restriction of $A$ to ${\rm ker}(L)$ is the operator considered in 2), hence the generator of an analytic semigroup of angle $\frac{\pi}{2}$. Since the boundary perturbation operator $\Phi:V_0\to \partial X$ is compact, the claim follows by~\cite[Thm.~2.6]{GK91} (observe that in that theorem is also proved, although not explicitly stated, that the angle of analiticity remains invariant under admissible boundary perturbations).
\end{proof}

\begin{rem}
Gaussian estimates like~\eqref{gauss} are a key argument for
discussing a number of different issues that go far beyond the
scope of this paper. Without going into details, we recall that
Theorem~\ref{gaussian} implies at once, among other, the property
of maximal regularity for $(T_p(t))_{t\geq 0}$ for
$p\in(1,\infty)$, upper estimates for the time derivative of the
kernel $K_t$, $L^p$-estimates for Schr\"odinger and wave
equations, and the fact that $A_p$ has bounded $H^\infty$-calculus
on each sector (and therefore that it has bounded imaginary
powers) for $p\in(1,\infty)$,  cf. \cite[\S~6.5 and Chapt.
7]{Ou04},~\cite[\S~7.4]{Ar04} and references therein. Observe
that, even if Gaussian estimates can only be obtained for local
nodal conditions, most of the above mentioned consequences also
hold in the general case by perturbation methods.
\end{rem}

\begin{theo}\label{wellp}
The first order network diffusion problem introduced in Section~2 is well-posed on $X_p$, $p\in[1,\infty)$, i.e., for all initial data $u_0\in X_p$ it admits a unique mild solution that continuously depends on the initial data. Such a solution is of class $C^\infty$ in both variables $x,t$ and its $\infty$-norm tends to 0 in time.
If furthermore $c_j\in C^\infty[0,1]$, $j=1,\ldots,m$, then the solution $u(t,\cdot)$ is of class $C^\infty$ with respect to the space variable.
\end{theo}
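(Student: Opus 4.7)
The argument has three essentially independent parts.

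\emph{(i) Existence, uniqueness and continuous dependence.} By Theorem~\ref{anal} the operator $(A_p,D(A_p))$ generates a strongly continuous analytic semigroup $(T_p(t))_{t\geq 0}$ of angle $\frac{\pi}{2}$ on $X_p$ for every $p\in[1,\infty)$. Classical semigroup theory then identifies $u(t):=T_p(t)u_0$ as the unique mild solution of (ACP), and the map $u_0\mapsto u(\cdot)$ is continuous from $X_p$ into $C([0,\infty);X_p)$ because $T_p(t)$ is uniformly bounded on compact time intervals.

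\emph{(ii) Joint regularity.} Analyticity of the semigroup implies that $t\mapsto T_p(t)u_0$ is real-analytic (hence $C^\infty$) on $(0,\infty)$ with values in $X_p$, and that $T_p(t)u_0\in\bigcap_{k\geq 1}D(A_p^k)$ for every $t>0$. This already yields $C^\infty$-smoothness in the time variable. For spatial smoothness, I would bootstrap on Proposition~\ref{identp}: a function $f\in D(A_p)$ is in $(W^{2,p}(0,1))^m$, and if $c_j\in C^\infty[0,1]$ then $A_p f\in D(A_p^{k-1})\subset(W^{2(k-1),p}(0,1))^m$ forces $(c_jf_j')'\in W^{2(k-1),p}(0,1)$, so that $f_j\in W^{2k,p}(0,1)$ by induction. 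Hence $D(A_p^k)\hookrightarrow(W^{2k,p}(0,1))^m$ for every $k$, and Sobolev embedding gives $u(t,\cdot)\in(C^\infty[0,1])^m$ for each $t>0$. The joint $C^\infty$-regularity in $(t,x)$ then follows from the fact that $T_p(\cdot)u_0:(0,\infty)\to (C^\infty[0,1])^m$ is itself analytic in the Fr\'echet topology, because the same analytic semigroup acts compatibly on each of the spaces $D(A_p^k)$.

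\emph{(iii) Decay of the $\infty$-norm.} For $p\geq 2$ the embedding $X_p\hookrightarrow X_2$ combined with Corollary~\ref{ultralemma3} yields, for $t$ large,
\[
\Vert u(t)\Vert_{X_\infty}\leq M\left(\frac{1-t\omega}{t}\right)^{\frac{1}{4}}e^{t\omega}\Vert u_0\Vert_{X_2}\longrightarrow 0,
\]
since $\omega<0$. For $p<2$ I would first use the $X_1\to X_2$ ultracontractivity estimate~\eqref{ultrabis} to map the initial datum into $X_2$ after an arbitrarily short time, and then apply the previous bound to the resulting element of $X_2$; composing gives the desired decay.

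\emph{Main obstacle.} The delicate step is the spatial bootstrap in part~(ii): Proposition~\ref{identp} describes $D(A_p)$ via both $W^{2,p}$-regularity \emph{and} the Kirchhoff-type node conditions, and iterating the regularity requires checking that these non-local conditions propagate correctly to every $D(A_p^k)$. Because the node conditions are linear in $f$ and its first derivative at the vertices, the inductive step preserves them automatically once $c_j\in C^\infty$, but this point must be verified carefully. A minor technical care is also needed in~(iii) to ensure that the decay constants implicitly invoked (uniform exponential stability via $s(A)<0$) are available in the current setup, which is the case under the standing dissipativity conditions guaranteeing Corollary~\ref{ultralemma3}.
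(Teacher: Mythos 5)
Your proposal is correct and follows essentially the same route as the paper: generation of an analytic semigroup on $X_p$ identified via Proposition~\ref{identp} as governing the network problem, ultracontractivity plus uniform exponential stability for the decay of the $\infty$-norm, and smoothing through $D(A_p^\infty)\subset\left(C^\infty[0,1]\right)^m$ for spatial regularity when $c_j\in C^\infty[0,1]$. Your version merely spells out the bootstrap $D(A_p^k)\hookrightarrow\left(W^{2k,p}(0,1)\right)^m$ and the $p<2$ case of the decay, which the paper leaves implicit.
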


\begin{proof}
The well-posedness and boundedness results follow from the fact that the semigroup $(T_2(t))_{t\geq 0}$ is ultracontractive and extends to a family of semigroups $(T_2(p))_{t\geq 0}$ that, by Proposition~\ref{identp}, actually govern the network diffusion problem. The decay of the solution is ensured by the uniform exponential stability of all semigroups.

Finally, if $c_j\in C^\infty[0,1]$, $j=1,\ldots,m$, then one sees that $D(A_{p}^\infty)\subset
\left(C^\infty[0,1]\right)^m$ for all $p\in (1,\infty)$. Since the semigroup $(T_p(t))_{t\geq 0}$ is analytic, it maps $X_p$ into $D(A_{p}^\infty)$, and the claim follows.
\end{proof}

Observe that if we replace the Dirichlet condition in $\mv_{n}$ by continuity of the values of $u_j(t,\mv_{n})$, $t\geq 0$, $j\in \Gamma(\mv_{n})$, plus a Kirchhoff-type condition analogous to that imposed on the other nodes, we obtain the system
\begin{equation}\label{netcpb}
\left\{
\begin{array}{rcll}
\dot{u}_j(t,x)&=& (c_j u_j')'(t,x), & x\in(0,1),\; j=1,\dots,m,\\
u_j(t,\mv _i)&=&u_\ell (t,\mv _i)=:d^u_i, & j,\ell\in \Gamma(\mv _i),\; i=1,\ldots,n,\\
\sum_{h=1}^{n} b_{ih} d^u_h&=& \sum_{j=1}^m \phi_{ij} c_j(\mv_i) u'_j(t,\mv_i), & i=1,\ldots,n,\\
\end{array}
\right.
\end{equation}
for $t\geq 0$, with initial conditions
$$u_j(0,x)=u_{0j}(x),\qquad x\in (0,1),\; j=1,\dots,m.$$
Here $b_{1h},b_{i1}$, are arbitrary numbers, $1\leq i,h\leq n-1$. Such an initial-value problem has been proved to be well-posed in~\cite{KMS06} (in the special case of $B=0$): we can compare its solution and that to the original network diffusion problem and obtain the following. For the sake of simplicity, in the following we restrict to the case of purely Kirchhoff nodal conditions. However, one can see that  a similar proof also works in the general case.

\begin{prop}
Let the coefficients $b_{ih}=0$, $1\leq i,h\leq n$. Then the
semigroup $(T_2(t))_{t\geq 0}$ governing the original network diffusion problem (as in~\S~2) is dominated
by the semigroup $(\tilde{T}_2(t))_{t\geq 0}$
governing~\eqref{netcpb} in the sense of positive semigroups.
\end{prop}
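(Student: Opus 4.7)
The plan is to invoke an Ouhabaz-type domination criterion for sesquilinear forms defined on \emph{different} domains, i.e., the nested-domain analogue of the criterion from \cite[Thm.~2.21]{Ou04} already used in Proposition~\ref{domin}. Under the assumption $b_{ih}=0$, both the form $\ea$ on $V_0$ (Dirichlet at $\mv_n$, Kirchhoff elsewhere) and the form $\tilde{\ea}$ on the strictly larger space
$$V := \left\{f\in (H^1(0,1))^m : \exists d^f\in{\mathbb C}^{n}\ \text{with}\ (\Phi^+)^\top d^f=f(0),\ (\Phi^-)^\top d^f=f(1)\right\}$$
(pure Kirchhoff at every node) reduce to the common leading expression $\sum_{j=1}^m\int_0^1 c_j(x)f'_j(x)\overline{g'_j(x)}\,dx$. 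Repeating the reasoning of Proposition~\ref{main} and Theorem~\ref{bd} verbatim on $V$ in place of $V_0$ yields continuity, accretivity, $X_2$-ellipticity up to a shift (the strict positivity of the $c_j$ is enough, even though the Poincar\'e inequality of Lemma~\ref{v0} is unavailable here), and positivity of the associated semigroup $(\tilde{T}_2(t))_{t\geq 0}$.

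The first substantive step is to verify that $V_0$ is an ideal of $V$ in the Banach-lattice sense. Since $V_0$ is cut out of $V$ by the single additional constraint $d^f_n=0$, both ideal axioms are immediate: if $f\in V_0$ then $|f|(\mv_n)=|f(\mv_n)|=0$, so $|f|\in V_0$; and if $v\in V_0$ with $v\geq 0$ and $u\in V$ satisfies $0\leq u\leq v$, then $0\leq u(\mv_n)\leq v(\mv_n)=0$ forces $u\in V_0$.

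The second step is the form comparison. Because $\ea$ is nothing but the restriction of $\tilde{\ea}$ to $V_0\times V_0$, the inequality required by the nested-domain domination criterion between the two forms (at the arguments respecting the ideal structure) holds \emph{with equality}. Applying the criterion to $(\tilde{T}_2(t))_{t\geq 0}$ and $(T_2(t))_{t\geq 0}$ then delivers the asserted domination.

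The only real obstacle I anticipate is bookkeeping: one must match precisely the hypotheses of the nested-domain version of Ouhabaz's domination theorem, namely that both forms are densely defined, continuous, accretive (up to a common shift), and generate positive semigroups, and that the ideal and form-comparison conditions above are satisfied. Each of these has been arranged in the steps above, and in particular the positivity of both semigroups is covered by Theorem~\ref{bd} (for $T_2$) and by its transparent counterpart on $V$ (for $\tilde T_2$, where the condition "real and positive off-diagonal" is vacuously satisfied by $B=0$).
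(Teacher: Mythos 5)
Your proposal is correct and follows essentially the same route as the paper: one checks that $V_0$ is an ideal of the larger pure-Kirchhoff form domain $V$ and then applies Ouhabaz's domination criterion, the form inequality being automatic because $\ea$ is the restriction of $\tilde{\ea}$ when $B=0$. The paper streamlines this by citing \cite{KMS06} for the sub-Markovian form on $V$ and invoking \cite[Cor.~2.22]{Ou04} (the restricted-form special case of Thm.~2.21), so that only the ideal property remains to be verified -- exactly the computation you carry out.
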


\begin{proof}
As shown in~\cite{KMS06}, $(\tilde{T}_2(t))_{t\geq 0}$ is a sub-Markovian semigroup that comes from a form with domain
$$V=\left\{f\in \left(H^1(0,1)\right)^m:\exists d^f\in {\mathbb C}^{n} \hbox{ s.t. } (\tilde{\Phi}^+)^\top d^f=f(0)\hbox{ and } (\tilde{\Phi}^-)^\top d^f=f(1)\right\},$$
where $\tilde{\Phi}^+=(\tilde{\phi}^+_{ij})$ and
$\tilde{\Phi}^-=(\tilde{\phi}^-_{ij})$ represent the incidence
matrices defined by
\begin{equation*}
\tilde{\phi}^+_{ij}:=
\left\{
\begin{array}{rl}
1, & \hbox{if } \me _j(0)=\mv _i,\\
0, & \hbox{otherwise},
\end{array}
\right.
\quad\hbox{and}\quad
\tilde{\phi}^-_{ij}:=
\left\{
\begin{array}{rl}
1, & \hbox{if } \me _j(1)=\mv _i,\\
0, & \hbox{otherwise}.
\end{array}
\right.
\end{equation*}

Since a Dirichlet condition in the node $\mv_{n}$ implies in particular continuity on a function in that vertex, one sees that $V_0\subset V$. Accordingly, by~\cite[Cor. 2.22]{Ou04} it suffices to prove that $V_0$ is an ideal of $V$, i.e., that the following conditions are satisfied:
\begin{itemize}
\item $f\in V_0\Rightarrow \vert f\vert \in V$,
\item $f\in V_0,\; g\in V, \hbox{ and } \vert g\vert\leq \vert f\vert\Rightarrow g\cdot {\rm sign}f\in V_0$.
\end{itemize}
To check the first condition, observe that $H^1_0(0,1)$ is an ideal of $H^1(0,1)$, and that the continuity of the values of $f\in \left(H^1(0,1)\right)^m$ in the nodes is not affected by taking the absolute value of $f$.
Let now $f\in V_0$ and $g\in V$. If $\vert g\vert\leq \vert f\vert$, then in particular $\vert g_j(\mv_{n})\vert \leq \vert f_j(\mv_{n})\vert=0$ for all $j\in \Gamma(\mv_{n})$, i.e., $g\in V_0$. Finally, since $A$ generates a positive semigroup, $V_0$ is an ideal of itself and this yields that $g\cdot {\rm sign}f\in V_0$.
\end{proof}

\section{The heat equation on spaces of continuous functions}

Also in view of applications, we are now interested to extend the
previous $L^p$-type well-posedness results to a setting where
continuous functions are considered instead.

Consider the part $\tilde{A}_\infty$ of $A$ in the Banach space $\tilde{X}_\infty:=\left(C[0,1]\right)^m$, whose domain is given by
\begin{equation*}
D(\tilde{A}_\infty)=\left\{f\in \left(C^2(0,1)\cap C^1[0,1]\right)^m:
\hspace{-10pt}
\begin{array}{rl}
&\exists d^f\in {\mathbb C}^{n-1}\times \{0\} \hbox{ s.t. }\\
&(\Phi^+)^\top d^f=f(0),\; (\Phi^-)^\top d^f=f(1),\\
&\hbox{ and }\Phi_w^+ f'(0)-\Phi_w^- f'(1)={\mathbb B}d^f\\
\end{array}
\right\}.
\end{equation*}

\begin{lemma}\label{secto}
The operator $\tilde{A}_\infty$ is sectorial on $\tilde{X}_\infty$, and it generates a compact analytic semigroup of angle $\frac{\pi}{2}$. If the matrix $B$ is a diagonal with negative entries, then such a semigroup is also positive and contractive.
\end{lemma}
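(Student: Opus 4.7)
The strategy parallels that of Theorem~\ref{anal}: first handle the unperturbed case $\mathbb{B}=0$ using the Gaussian estimates and the symmetry of $\ea$, and then reduce the general case to the unperturbed one via the Greiner--K\"uhn boundary perturbation theorem of~\cite[Thm.~2.6]{GK91}.

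Step 1 (unperturbed case). Assume first $B=0$. Then $\ea$ is symmetric and the associated semigroup $(T_2(t))_{t\geq 0}$ on $X_2$ is self-adjoint, positive, and by Theorem~\ref{gaussian} enjoys Gaussian upper bounds~\eqref{gauss}. Standard results on self-adjoint semigroups with Gaussian estimates (cf.~\cite[\S~7.4]{Ar04} and~\cite[Thm.~7.3.9]{Ou04}) then yield that the semigroup extrapolates not only to the $L^p$-scale but also to a strongly continuous analytic semigroup of angle $\frac{\pi}{2}$ on the subspace $C_0(G)\subset\tilde{X}_\infty$ of continuous functions vanishing at $\mv_n$. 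By consistency, the generator of this extrapolated semigroup is the part of $A$ in $C_0(G)$, which one identifies with $\tilde{A}_\infty$ (with $\mathbb{B}=0$) by an integration-by-parts argument analogous to Lemma~\ref{ident}.

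Step 2 (general case). For arbitrary $B$, I would invoke~\cite[Thm.~2.6]{GK91} with the choices $X:=\tilde{X}_\infty$, $\partial X:=\mathbb{C}^{n-1}$,
$$Y:=\left\{f\in \left(C^2(0,1)\cap C^1[0,1]\right)^m:\;\exists d^f\in\mathbb{C}^{n-1}\times\{0\}\text{ s.t. }(\Phi^+)^\top d^f=f(0),\;(\Phi^-)^\top d^f=f(1)\right\},$$
$$Lu:=\begin{pmatrix}\sum_{j=1}^m \phi_{1j}c_j(\mv_1)u'_j(\mv_1)\\ \vdots \\ \sum_{j=1}^m \phi_{n-1\,j}c_j(\mv_{n-1})u'_j(\mv_{n-1})\end{pmatrix},\qquad \Phi u:=\mathbb{B}d^u.$$
The restriction of $A$ to $\ker L$ is exactly the operator treated in Step 1, which generates an analytic semigroup of angle $\frac{\pi}{2}$. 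The boundary perturbation $\Phi$ is finite-rank, hence compact, and~\cite[Thm.~2.6]{GK91} therefore transfers analiticity of angle $\frac{\pi}{2}$ to $\tilde{A}_\infty$.

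Step 3 (compactness, positivity, contractivity). Compactness of the semigroup follows from the fact that $D(\tilde{A}_\infty)\subset (C^1[0,1])^m$ embeds compactly into $\tilde{X}_\infty$ by Arzel\`a--Ascoli, so that $\tilde{A}_\infty$ has compact resolvent. If $B$ is diagonal with negative entries, then $\tilde{K}_t\geq 0$ by Theorem~\ref{gaussian}, so the $\tilde{X}_\infty$-semigroup -- which acts by integration against $\tilde{K}_t$ via the isometry $U$ of Definition~\ref{xplp} -- is positive. Moreover, in this case both~\eqref{xinfty1} and~\eqref{xinfty2} are trivially satisfied, so Theorem~\ref{comp} yields $X_\infty$-contractivity of the extrapolation semigroup, which by consistency restricts to contractivity of $(e^{t\tilde{A}_\infty})_{t\geq 0}$ on $\tilde{X}_\infty$.

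The delicate step is Step 1: transferring analiticity of the \emph{optimal} angle $\frac{\pi}{2}$ from $X_2$ to the space of continuous functions. This is where the self-adjointness of the unperturbed form, combined with the complex-time Gaussian estimates implied by the real-time ones for self-adjoint semigroups, plays an essential r\^ole; without the $B=0$ reduction and the symmetry of $\ea_0$ this angle would not be recoverable, so the Greiner--K\"uhn perturbation step (which preserves the angle under compact boundary perturbations) is indispensable to obtain the stated result for arbitrary $B$.
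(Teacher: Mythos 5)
Your overall architecture --- treat $B=0$ first, then pass to general $B$ via the Greiner--K\"uhn boundary perturbation theorem --- is exactly the paper's, and your Steps 2 and 3 are sound (Step 3 even supplies details on compactness, positivity and contractivity that the paper's proof leaves implicit). The gap is in Step 1. You assert that self-adjointness plus Gaussian estimates yield, by ``standard results,'' an analytic semigroup of angle $\frac{\pi}{2}$ on $C_0(G)$. There are two problems with this. First, what the Gaussian/ultracontractivity machinery gives for free is extrapolation within the $L^p$-scale up to $X_\infty$; the passage from $X_\infty=\left(L^\infty(0,1)\right)^m$ to a space of \emph{continuous} functions is precisely the non-trivial content of the lemma and is not covered by the results you cite --- one must actually show that the resolvent (or the semigroup) maps bounded measurable data to continuous functions. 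Second, even granting such an extrapolation, you land on $C_0(G)$, whereas the lemma asserts sectoriality of $\tilde{A}_\infty$ on all of $\tilde{X}_\infty=\left(C[0,1]\right)^m$; since $\tilde{A}_\infty$ is not densely defined there (as the paper stresses after Proposition~\ref{exa}), a ``strongly continuous analytic semigroup on $C_0(G)$'' is a statement about the part of $\tilde{A}_\infty$ in $C_0(G)$, which is the object of a later theorem, not of this one.

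The paper closes this gap with one short regularity observation that your argument is missing: for $f\in\tilde{X}_\infty\subset X_2$ one has $R(\lambda,A_\infty)f=R(\lambda,A_2)f\in D(A_2)\subset\left(H^2(0,1)\right)^m\hookrightarrow\left(C[0,1]\right)^m$, so the resolvent of $A_\infty$ automatically takes values in continuous functions. Hence the uniform sector estimate $\Vert\lambda R(\lambda,A_\infty)\Vert_{{\mathcal L}(X_\infty)}\leq M_\epsilon$, available for $B=0$ from Theorems~\ref{comp} and~\ref{anal}, restricts verbatim to the sup-norm on $\tilde{X}_\infty$, giving sectoriality of angle $\frac{\pi}{2}$ there; the compact embedding of $\left(H^2(0,1)\right)^m$ into $\left(C[0,1]\right)^m$ also yields compactness of the resolvent. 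If you replace your appeal to ``standard extrapolation to $C_0(G)$'' by this restriction argument, the rest of your proof goes through as written.
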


\begin{proof}
1) Let us first consider the case $B=0$. Then by Theorem~\ref{comp} and Theorem~\ref{anal} we deduce that all the operators $A_{p}$, $p\in[1,\infty]$, are dissipative and sectorial of angle $\frac{\pi}{2}$. In particular, for each $\epsilon\in(0,\frac{\pi}{2})$ there exists $M_\epsilon\geq 1$ such that the estimate
\begin{equation}\label{sect}
\Vert \lambda R(\lambda,A_\infty)\Vert_{{\mathcal L}(X_\infty)}\leq M_\epsilon
\end{equation}
holds for all $\lambda\in \{ \mu\in{\mathbb C}: \vert \hbox{arg } \mu\vert <\pi-\epsilon\}$.

Now observe that if $f\in \tilde{X}_\infty$, then $R(\lambda,A_\infty)f=R(\lambda,A_2)f\in D(A_2)$. But one has $D(A_2)\subset \big(H^2(0,1)\big)^m\hookrightarrow \big(C[0,1]\big)^m$, with compact embedding. Therefore, we see that $R(\lambda,A_\infty)f$ is a continuous function for all $\lambda\in \{ \mu\in{\mathbb C}: \vert \hbox{arg } \mu\vert <\pi-\epsilon\}$. It follows that the analogous of~\eqref{sect} also holds with respect to the norm of $\tilde{X}_\infty$, hence $\tilde{A}_\infty$ is sectorial and it generates an analytic semigroup of angle $\frac{\pi}{2}$.

2) The case of general $B$ can be treated as in the proof of Theorem~\ref{anal}, by means of the theory of boundary perturbation for sectorial operators discussed in~\cite{GK91}.
\end{proof}

The main motivation for considering semigroups on $\left(C[0,1]\right)^m$ comes from applications involving semilinear equations, since we can then effectively apply the theory developed, e.g., in~\cite{Lu95} in order to discuss well-posedness and stability. As an elementary, yet motivating example we mention the following system, related to a Hodgkin--Huxley-model describing the transmission of potential along neurons (see~\cite{Sc02} and references therein).

\begin{prop}\label{exa}
Let $\psi_j\in C^2({\mathbb R})$, $j=1,\ldots,m$. Consider for $t> 0$ the semilinear parabolic network problem given by
\begin{equation*}
\left\{\begin{array}{rcll}
\dot{u}_j(t,x)&=& (c_j u_j')'(t,x)+(\psi_j(u_j(t,x))', &x\in(0,1), \; j=1,\dots,m,\\
u_j(t,\mv _i)&=&u_\ell (t,\mv _i)=: d^u_i, &j,\ell\in \Gamma(\mv _i),\; i=1,\ldots,n,\\
\sum_{h=1}^{n-1} b_{ih} d^u_h&=& \sum_{j=1}^m \phi_{ij} c_j(\mv_i) u'_j(t,\mv _i), & i=1,\ldots,n-1,\\
d^u_n(t)&=&0.
\end{array}
\right.
\end{equation*}
Then for all $u_0\in (C[0,1])^m$ the Cauchy problem associated to such a system admits a unique (global) mild solution $u$ that depends continuously (with respect to the sup-norm) on the initial data. In fact, $u$ satisfies the problem pointwise for $t>0$.
\end{prop}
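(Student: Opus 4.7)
My plan is to cast the system as an abstract semilinear Cauchy problem $\dot u = \tilde A_\infty u + F(u)$ on $\tilde X_\infty=(C[0,1])^m$, where the nonlinear operator is defined by $F(v)_j := (\psi_j(v_j))' = \psi_j'(v_j)\,v_j'$, and to apply the semilinear theory for analytic semigroups developed in~\cite{Lu95}. The linear part is already taken care of by Lemma~\ref{secto}, which provides the compact analytic semigroup $(\tilde T_\infty(t))_{t\geq 0}$ of angle $\pi/2$ on $\tilde X_\infty$; in particular, it satisfies the sharp parabolic smoothing estimates required by Lunardi's framework.

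Since $F$ is a first-order differential operator, it is not defined on all of $\tilde X_\infty$, so I would fix an exponent $\alpha\in(1/2,1)$ and work in the real interpolation space $D_{\tilde A_\infty}(\alpha,\infty)$, which embeds continuously into a subspace of $(C^1[0,1])^m$ compatible with the node conditions. The identity $(\psi_j(v_j))' - (\psi_j(w_j))' = \psi_j'(v_j)(v_j'-w_j') + (\psi_j'(v_j)-\psi_j'(w_j))\,w_j'$, combined with $\psi_j\in C^2$, immediately shows that $F$ is locally Lipschitz from $D_{\tilde A_\infty}(\alpha,\infty)$ into $\tilde X_\infty$. The standard local existence and uniqueness theorem for semilinear problems with analytic generators, e.g.~\cite[Thm.~7.1.2]{Lu95}, then furnishes a unique maximal mild solution $u\in C([0,t_{\max});\tilde X_\infty)\cap C((0,t_{\max});D_{\tilde A_\infty}(\alpha,\infty))$ depending continuously on $u_0$ in sup-norm. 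Bootstrapping the variation-of-constants formula, using both the $\tau^{-\alpha}$-smoothing from analyticity and the $C^2$-regularity of $\psi_j$, upgrades $u$ to a $C^\infty$ classical solution satisfying the equation pointwise on $(0,t_{\max})\times(0,1)$.

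The main obstacle is to rule out finite-time blow-up, i.e.~to prove $t_{\max}=+\infty$. By the blow-up alternative this reduces to an a priori bound on $\|u(t)\|_{\tilde X_\infty}$. Here I would exploit the crucial structural feature that $F$ annihilates constants: rewriting the equation in nondivergence form $\dot u_j = c_j u_j'' + (c_j' + \psi_j'(u_j))\,u_j'$ presents a linear parabolic equation per edge whose coefficients are pointwise bounded once $u$ is. In view of the homogeneous Dirichlet condition at $\mv_n$, the constants $\pm\|u_0\|_\infty$ serve as stationary super- and subsolutions compatible with the transmission conditions; applying the maximum principle available for the associated positive linear network semigroup (cf.~Theorem~\ref{bd}) transfers to the nonlinear problem and yields $\|u(t)\|_\infty\leq\|u_0\|_\infty$ throughout $[0,t_{\max})$. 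This a priori bound closes the blow-up alternative, and combining it with the uniform exponential stability from Theorem~\ref{comp} delivers the claimed decay of $\|u(t,\cdot)\|_\infty$ to $0$ as $t\to\infty$.
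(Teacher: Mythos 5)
Your proposal follows essentially the same route as the paper: the paper recasts the system as a semilinear abstract Cauchy problem for the Nemitsky operator $\Psi$ on $\tilde{X}_\infty$, invokes Lemma~\ref{secto} for the analytic semigroup, and then simply defers to ``mimicking'' \cite[Prop.~7.3.6]{Lu95} --- whose proof is precisely the local-existence-in-an-interpolation-space plus maximum-principle a priori bound argument you spell out. Your closing remark on decay of $\Vert u(t,\cdot)\Vert_\infty$ is not part of the statement being proved, but it is harmless.
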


\begin{proof}
Rewrite the above system as a semilinear abstract Cauchy problem
\begin{equation*}
\left\{\begin{array}{rcll}
\dot{u}(t)&=& \tilde{A}_\infty u(t)+\Psi(u(t)), &t> 0,\\
u(0)&=&u_{0},\\
\end{array}
\right.
\end{equation*}
on $\tilde{X}_\infty$. Here
$\Psi
$ is the Nemitsky operator defined by
$$\Psi(u)(\cdot):=\begin{pmatrix}
\frac{d}{dx}\left(\psi_1(u_1(\cdot))\right)\\
\vdots\\
\frac{d}{dx}\left(\psi_m(u_m(\cdot))\right) \\
\end{pmatrix}.$$
Taking into account Lemma~\ref{secto}, we are in a setting that is analogous to that of~\cite[\S~7.3.3]{Lu95}. Now, mimicking the proof of~\cite[Prop. 7.3.6]{Lu95} the claim follows.
\end{proof}

A thorough treatment of well-posedness and stability of semilinear diffusion problems over networks goes beyond the scope of this paper. We will deal with such an issue in the forthcoming paper~\cite{CM06}.

\bigskip
Even in the linear case (i.e., $\psi_1\equiv 0$, $j=1,\ldots,m$), the problem considered in Proposition~\eqref{exa} is not well-posed in a classical sense. In fact, albeit sectorial (hence the generator of an analytic semigroup), the operator $\tilde{A}_\infty$ is not densely defined in $\tilde{X}_\infty$, thus the generated semigroup is not strongly continuous.

By the theorem of Stone--Weierstrass, the already defined space $C_0(G)$ of continuous function over the network that vanish in $\mv_{n}$)  is the closure of $D(\tilde{A}_\infty)$. 

\begin{theo}
The part $\bf A$ of $\tilde{A}_\infty$ in $C_0(G)$ generates a compact, strongly continuous semigroup which is analytic of angle $\frac{\pi}{2}$. If further $B$ is diagonal with negative entries, then such a semigroup is contractive, real, positive,  and uniformly exponentially stable.
\end{theo}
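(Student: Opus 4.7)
The plan is to exploit Lemma~\ref{secto} together with the general principle that the part of a sectorial operator in the closure of its domain always generates a strongly continuous analytic semigroup of the same angle. First I would note that, by the theorem of Stone--Weierstrass quoted just before the statement, $C_0(G)=\overline{D(\tilde A_\infty)}^{\tilde X_\infty}$. By Lemma~\ref{secto} the operator $\tilde A_\infty$ is sectorial of angle $\pi/2$ on $\tilde X_\infty$. I can therefore invoke the standard result on non-densely defined sectorial operators (see, e.g., \cite[Prop.~2.2.9 and Cor.~3.1.19]{Lu95}), which states that the part $\mathbf A$ of a sectorial operator in the closure of its domain is itself sectorial of the same angle, densely defined there, and generates a strongly continuous analytic semigroup of that same angle. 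This gives the main claim: $\mathbf A$ generates a strongly continuous analytic semigroup on $C_0(G)$ of angle $\tfrac{\pi}{2}$.

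Next I would address compactness of the semigroup. Since $D(\tilde A_\infty)\subset \left(H^2(0,1)\right)^m$ and the embedding $(H^2(0,1))^m\hookrightarrow (C[0,1])^m$ is compact by Rellich--Kondrachov, $\tilde A_\infty$ has compact resolvent on $\tilde X_\infty$; consequently the same is true for its part $\mathbf A$ on $C_0(G)$. For an analytic semigroup, compact resolvent implies that $e^{t\mathbf A}$ is a compact operator for every $t>0$.

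For the additional conclusions under the assumption that $B$ is diagonal with negative entries, I would observe that by Lemma~\ref{secto} the semigroup on $\tilde X_\infty$ generated by $\tilde A_\infty$ is positive and contractive, and by Theorem~\ref{comp} it is also real. Since $C_0(G)$ is a closed sublattice of $\tilde X_\infty$ that is invariant under $e^{t\tilde A_\infty}$ (as it is the closure of $D(\tilde A_\infty)$ in a sectorial setting), the restriction $e^{t\mathbf A}$ inherits reality, positivity, and contractivity. Uniform exponential stability then follows by combining the $p$-independence of the spectrum from Theorem~\ref{comp} with the strict negativity $s(A)<0$ established in Corollary~\ref{sa+diss}: indeed $s(\mathbf A)=s(A_p)=s(A)<0$, and for analytic semigroups one has $\omega(\mathbf A)=s(\mathbf A)$.

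The step I expect to require the most care is verifying that $C_0(G)$ is genuinely invariant under $e^{t\tilde A_\infty}$ (so that passing to the part is compatible with the lattice/contractivity properties), and that the general theorem on non-densely defined sectorial operators applies cleanly in our setting. Once these points are in place, however, all remaining properties are transferred from $\tilde A_\infty$ to $\mathbf A$ by the construction of the part, and the spectral/stability claims follow from the $p$-independence result already proved.
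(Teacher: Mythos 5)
Your proposal is correct, but it reaches the conclusion by a genuinely different route than the paper. For the generation statement, the paper first observes (via Theorem~\ref{comp}) that $\bf A$ is resolvent positive and densely defined and invokes \cite[Thm.~3.11.9]{ABHN01} to obtain a positive $C_0$-semigroup, and only then uses the sectoriality and dissipativity from Lemma~\ref{secto} to upgrade to analyticity of angle $\frac{\pi}{2}$ and contractivity; you instead apply the general theorem that the part of a sectorial operator in the closure of its domain is densely defined, sectorial of the same angle, and generates a strongly continuous analytic semigroup there. Your route is cleaner and, notably, does not need positivity at all for the first sentence of the theorem (the paper's appeal to resolvent positivity sits a little awkwardly there, since positivity is only available under hypotheses on $B$ that the first sentence does not assume). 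For compactness the two arguments also differ: the paper factorizes ${\bf T}(t)=i_{X_{\bf A},C_0(G)}\circ T_2(t)\circ i_{C_0(G),X_2}$ using the smoothing $T_2(t)X_2\subset D(A_2^\infty)$ and Ascoli--Arzel\`a, which has the side benefit of exhibiting the regularizing effect of the semigroup on $C_0(G)$; you use compact resolvent plus norm continuity of analytic semigroups, which is more self-contained. The one step in your version that deserves an explicit word is the claim that $\tilde A_\infty$ has compact resolvent: you need the resolvent to be bounded from $\tilde X_\infty$ into $\left(C^1[0,1]\right)^m$ (equivalently, that the graph norm of $\tilde A_\infty$ controls the first derivative), which follows from a Landau-type interpolation inequality since $\Vert f\Vert_\infty$ and $\Vert (c_jf_j')'\Vert_\infty$ together control $\Vert f_j'\Vert_\infty$; once that is noted, compactness follows as you say. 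The stability argument ($s({\bf A})=s(A)<0$ via $p$-independence and $\omega=s$ for analytic semigroups) and the transfer of reality, positivity, and contractivity by restriction to the invariant closed sublattice $C_0(G)$ are essentially identical to the paper's.
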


\begin{proof}
Reasoning as in the proof of Proposition~\ref{exa}, we deduce from Theorem~\ref{comp} that ${{\bf A}}$ is a resolvent positive operator on $C_0(G)$. Since $\bf A$ is also densely defined, by~\cite[Thm. 3.11.9]{ABHN01} it generates a positive strongly continuous semigroup $({\bf T}(t))_{t\geq 0}$.

By Lemma~\ref{secto}, we see that $\bf A$ is sectorial and dissipative: this yields the analyticity (with angle $\frac{\pi}{2}$) and the contractivity of $({\bf T}(t))_{t\geq 0}$. Observe further that the $p$-independence of the spectrum of $A_p$ (by Theorem~\ref{comp}) yields the invertibility of ${\bf A}$, hence the uniform exponential stability of $({\bf T}(t))_{t\geq 0}$.

Finally, in order to show that the semigroup is compact, observe that due to its analyticity $T_2(t)$ maps $X_2$ into $D(A_2^\infty)\subset\left(C^\infty[0,1]\right)^m\cap C_0(G)\subset D({\bf A})$ for all $t>0$. Thus, denoting by $X_{\bf A}$ the Banach space obtained by endowing $D({\bf A})$ with the graph norm, we have
$${\bf T}(t)=i_{X_{\bf A},C_0(G)}\circ T_2(t)\circ i_{C_0(G),X_2},\qquad t>0.$$
Here $i_{C_0(G),X_2}$ and $i_{X_{\bf A},C_0(G)}$ denote the canonical imbeddings of $C_0(G)$ into $X_2$ and of $X_{\bf A}$ into $C_0(G)$, respectively. It follows from the theorem of Ascoli-Arzel\`a that the latter imbedding is compact, so that also ${\bf T}(t)$ is compact for $t>0$, and the claim follows.
\end{proof}

We can finally draw a conclusion that is similar to Theorem~\ref{wellp}, and can be proved likewise.

\begin{theo}
The network diffusion problem is well-posed on $C_0(G)$, i.e., for all initial data $u_0\in  C_0(G)$ it admits a unique classical solution that continuously depends on the initial data. The sup-norm of the solution tends to 0 in time.
\end{theo}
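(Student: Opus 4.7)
The plan is to read off all three claims directly from the previous theorem, which asserts that $\mathbf{A}$ generates a strongly continuous, analytic, compact semigroup $(\mathbf{T}(t))_{t\geq 0}$ of angle $\frac{\pi}{2}$ on $C_0(G)$, and (under the additional assumption that $B$ is diagonal with negative entries) that this semigroup is contractive and uniformly exponentially stable. Once this is in hand, well-posedness in the $C_0$-setting is a packaging exercise.

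First I would address existence, uniqueness, and continuous dependence. Since $(\mathbf{T}(t))_{t\geq 0}$ is a $C_0$-semigroup on $C_0(G)$, for every $u_0\in C_0(G)$ the function $u(t):=\mathbf{T}(t)u_0$ is the unique mild solution of $\dot u(t)=\mathbf{A}u(t)$, $u(0)=u_0$, and the bound $\Vert u(t)\Vert_\infty\leq M\Vert u_0\Vert_\infty$ valid on every compact interval $[0,T]$ gives the continuous dependence on initial data.

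Next, to promote this mild solution to a classical solution, I would invoke the analyticity of $(\mathbf{T}(t))_{t\geq 0}$: for every $t>0$ one has $\mathbf{T}(t)u_0\in D(\mathbf{A}^\infty)\subset D(\mathbf{A})$, the orbit $t\mapsto u(t)$ is real-analytic for $t>0$ with $\dot u(t)=\mathbf{A}u(t)$, and it is continuous up to $t=0$ since $u_0\in C_0(G)$. Because $\mathbf{A}$ is the part of $\tilde A_\infty$ in $C_0(G)$, membership of $u(t)$ in $D(\mathbf{A})$ means exactly that each component $u_j(t,\cdot)$ is in $C^2(0,1)\cap C^1[0,1]$, that the continuity and Dirichlet conditions at the vertices hold, and that the generalized Kirchhoff law $\Phi_w^-u'(t,1)-\Phi_w^+u'(t,0)=\mathbb{B}d^{u(t)}$ is satisfied; thus $u$ solves the concrete network problem pointwise, which is the classical solution concept on $C_0(G)$.

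Finally, the decay $\Vert u(t)\Vert_\infty\to 0$ as $t\to\infty$ follows from the uniform exponential stability part of the previous theorem: there exist $M\geq 1$ and $\omega<0$ such that $\Vert \mathbf{T}(t)\Vert_{\mathcal{L}(C_0(G))}\leq Me^{\omega t}$ for all $t\geq 0$. There is no genuine obstacle in this argument; the only point requiring a minimum of care is the verification that the domain description of $\mathbf{A}$ really encodes all nodal conditions of the concrete problem, but this was already dealt with when identifying $D(\tilde A_\infty)$ and taking its part in $C_0(G)$.
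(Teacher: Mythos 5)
Your argument is correct and is essentially the paper's own: the paper proves this theorem by the remark that it ``can be proved likewise'' to Theorem~\ref{wellp}, i.e., by reading well-posedness off the preceding generation result for $\mathbf{A}$ on $C_0(G)$ (with analyticity upgrading the mild solution to a classical one) and the decay off uniform exponential stability. Your parenthetical caveat that the exponential stability of $(\mathbf{T}(t))_{t\geq 0}$ was only established under an extra hypothesis on $B$ is a fair observation about the paper's bookkeeping of assumptions, not a gap in your argument.
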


\section{A technical lemma}

The following result seems to be of independent interest. Its proof is due to Wolfgang Arendt, whom we warmly thank.

\begin{lemma}\label{wolfg}
Let $A=(a_{ih})$ be a $n\times n$ matrix with complex-valued coefficients. Then the semigroup $(e^{tA})_{t\geq 0}$ generated by $A$ is $\ell^\infty$-contractive, i.e.,
$$\Vert e^{tA} x\Vert_\infty \leq \Vert x\Vert_\infty,\qquad t\geq 0,\; x\in{\mathbb C}^{n},$$
if and only if
\begin{equation}\label{wolfgcond}
{\rm Re}a_{ii} + \sum_{h\not=i} \vert a_{ih}\vert\leq 0\qquad \hbox{for all }i=1,\ldots,n.
\end{equation}
\end{lemma}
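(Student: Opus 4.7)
The plan is to reduce $\ell^\infty$-contractivity of $(e^{tA})_{t\geq 0}$ to dissipativity of $A$ with respect to the $\ell^\infty$-norm and then decode this dissipativity in terms of the matrix entries. Since $A$ is a bounded operator on a finite-dimensional space, by Lumer--Phillips (or equivalently, by differentiating the contractivity inequality at $t=0$) the contractivity of $(e^{tA})_{t\geq 0}$ is equivalent to the condition that for every $x\in{\mathbb C}^n$ there is $x^*$ in the duality set $J(x)\subset \ell^1$ with ${\rm Re}\langle Ax,x^*\rangle\le 0$. For $\ell^\infty$ the duality set admits the simple description: writing $I(x):=\{i:|x_i|=\|x\|_\infty\}$, the functionals $x^*=\|x\|_\infty\,\overline{x_i}/|x_i|\,\me_i$, $i\in I(x)$, all lie in $J(x)$. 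Under this pairing, ${\rm Re}\langle Ax,x^*\rangle$ simplifies to ${\rm Re}(\overline{x_i}(Ax)_i)$, and expanding via the entries of $A$ yields
\[
{\rm Re}(\overline{x_i}(Ax)_i)={\rm Re}(a_{ii})|x_i|^2+\sum_{h\neq i}{\rm Re}\bigl(a_{ih}\overline{x_i}x_h\bigr).
\]

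For the sufficiency direction, I would assume~\eqref{wolfgcond} and, for an arbitrary $x\in{\mathbb C}^n$ and arbitrary $i\in I(x)$, bound each off-diagonal term by $|a_{ih}||x_i||x_h|\le |a_{ih}||x_i|^2$ (using $|x_h|\leq \|x\|_\infty=|x_i|$). Summation then yields
\[
{\rm Re}(\overline{x_i}(Ax)_i)\le |x_i|^2\Bigl({\rm Re}(a_{ii})+\sum_{h\neq i}|a_{ih}|\Bigr)\le 0,
\]
which supplies the required $x^*\in J(x)$ and hence contractivity via Lumer--Phillips.

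For the necessity direction, I would fix $i$ and construct a test vector saturating all the inequalities above. Namely, set $x_i:=1$ and, for $h\neq i$, put $x_h:=\overline{a_{ih}}/|a_{ih}|$ when $a_{ih}\neq 0$ (and $x_h:=1$ otherwise), so that $\|x\|_\infty=1$ and every product $a_{ih}x_h$ equals $|a_{ih}|$. A direct computation then gives $(Ax)_i=a_{ii}+\sum_{h\neq i}|a_{ih}|$. Since contractivity implies $|(e^{tA}x)_i|^2\le 1$ and the expansion
\[
|(e^{tA}x)_i|^2=1+2t\,{\rm Re}(\overline{x_i}(Ax)_i)+O(t^2)
\]
holds as $t\to 0^+$, we conclude ${\rm Re}(a_{ii})+\sum_{h\neq i}|a_{ih}|\le 0$, which is~\eqref{wolfgcond}.

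The main obstacle is a bookkeeping one: one must check that the functional $x^*$ used in the sufficiency argument indeed lies in the duality set $J(x)$ (so that Lumer--Phillips applies), and one must choose the phases of the components of the test vector in the necessity argument to align \emph{all} off-diagonal contributions $a_{ih}x_h$ with the positive real axis simultaneously. Once both facts are in place, the computation reduces to an elementary estimate on the real part of an inner product.
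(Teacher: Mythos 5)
Your proof is correct, but it takes a genuinely different route from the paper. You argue directly via dissipativity with respect to the $\ell^\infty$-norm: for sufficiency you exhibit, for each $x$, a duality functional supported on a coordinate of maximal modulus and verify ${\rm Re}\langle Ax,x^*\rangle\le 0$ by the triangle inequality, and for necessity you test contractivity on a phase-aligned unit vector and differentiate $|(e^{tA}x)_i|^2$ at $t=0^+$. Both steps are sound (in the sufficiency step the range condition in Lumer--Phillips is automatic since $A$ is bounded, and in the necessity step the expansion $e^{tA}=I+tA+O(t^2)$ justifies the first-order computation). The paper instead proceeds in two stages: it first records that for a \emph{positive} semigroup $\ell^\infty$-contractivity is equivalent to $A\mathbb{1}\le 0$, and then reduces the general case to the positive one by passing to the modulus semigroup generated by the matrix $A^\sharp$ with entries ${\rm Re}\,a_{ii}$ on the diagonal and $|a_{ih}|$ off it (citing Derndinger), using domination for sufficiency and a duality detour through $\ell^1$-contractivity (via Becker--Greiner) for necessity. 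Your argument is more elementary and self-contained --- in particular it avoids the delicate step of transferring contractivity from $(e^{tA})_{t\ge0}$ to its modulus semigroup, which is the hardest part of the paper's proof --- while the paper's approach embeds the lemma in the domination framework used elsewhere in the article and extends more naturally to settings where explicit duality-set computations are unavailable.
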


\begin{proof}
The proof goes in two steps.

1) Let us first assume the semigroup $(e^{tA})_{t\geq 0}$ generated by the matrix $A=(a_{ih})$ to be positive, i.e., to have real entries that are positive off-diagonal. Then it is known that $(e^{tA})_{t\geq 0}$ is $\ell^\infty$-contractive if and only if $A{\mathbb 1}\leq 0$, i.e., if and only if
\begin{equation}\label{wolfgcond2}
a_{ii} + \sum_{h\not=i} a_{ih} \leq 0\qquad \hbox{for all }i=1,\ldots,n.
\end{equation}

2) Let us now consider the case of a general matrix $A$ and define a new matrix $A^\sharp=(a^\sharp_{ih})$ by
$$a^\sharp_{ih}:=\left\{
\begin{array}{ll}
{\rm Re}a_{ii}\qquad &\hbox{if } h=i,\\
\vert a_{ih}\vert & \hbox{if } i\not=h.
\end{array}
\right.$$
It is known (see~\cite{De84}) that $A^\sharp$ generates the modulus semigroup of $(e^{tA})_{t\geq 0}$, i.e., the (unique) semigroup $(e^{tA^\sharp})_{t\geq 0}$ that dominates $(e^{tA})_{t\geq 0}$ in the sense of positive semigroups, and is dominated by any other semigroup also dominating $(e^{tA})_{t\geq 0}$.

Let us first assume that~\eqref{wolfgcond} holds. Since
$(e^{tA^\sharp})_{t\geq 0}$ is positive, by part 1) it is also
$\ell^\infty$-contractive. Now, since $(e^{tA^\sharp})_{t\geq 0}$
dominates $(e^{tA})_{t\geq 0}$, it follows that $(e^{tA})_{t\geq
0}$ is $\ell^\infty$-contractive as well.

 Conversely, let
$(e^{tA})_{t\geq 0}$ be $\ell^\infty$-contractive. In order to
show that~\eqref{wolfgcond} holds, consider the modulus semigroup
$(e^{tA^\sharp})_{t\geq 0}$, which is positive and,
by~\cite[Prop.~2.26]{Ou04}, also $\ell^\infty$-contractive. One
can check directly that the adjoint $(e^{tA^{\sharp}*})_{t\geq 0}$
of the modulus semigroup also dominates the adjoint
$(e^{tA*})_{t\geq 0}$, which by duality is $\ell^1$-contractive.
Now it follows from the proof of~\cite[Prop.~2.5]{BG86} that the
semigroup $(e^{tA^{\sharp}*})_{t\geq 0}$ is also
$\ell^1$-contractive, and by duality the positive semigroup
$(e^{tA^\sharp})_{t\geq 0}$ is $\ell^\infty$-contractive. Thus, by
part 1) the entries of $A^\sharp$ satisfy the
condition~\eqref{wolfgcond2}, i.e., \eqref{wolfgcond} holds. This
concludes the proof.
\end{proof}

\medskip

Received February 2006; revised December 2006.

\medskip
\end{document}